\newtheorem*{COR}{Corollary}
\newtheorem*{MAIN}{Main Theorem}
\newtheorem{theorem}{Theorem}[section]
\newtheorem{prop}[theorem]{Proposition}
\newtheorem{claim}{Claim}[theorem]
\newtheorem{subclaim}{Subclaim}[claim]
\newtheorem{lemma}[theorem]{Lemma}
\newtheorem{cor}[theorem]{Corollary}
\newtheorem{fact}[theorem]{Fact}
\newtheorem*{prikry}{Prikry Property}
\newtheorem*{cpp}{Complete Prikry Property}
\newtheorem*{zlinked}{Linked${}_0$ Property}
\theoremstyle{definition}
\newtheorem{definition}[theorem]{Definition}
\newtheorem{notation}[theorem]{Notation}
\newtheorem{conv}[theorem]{Convention}
\newtheorem{goal}[theorem]{Goal}
\newtheorem{example}[theorem]{Example}
\newtheorem*{setup}{Setup~\thesection}
\theoremstyle{remark}
\newtheorem{remark}[theorem]{Remark}
\newcounter{condition}
\def\br{\blacktriangleright}
\def\sq{\sqsubseteq}
\def\s{\subseteq}
\def\lh{\ell}
\def\forces{\Vdash}
\newcommand\blkref[1]{Building Block~#1}
\providecommand{\myceil}[2]{\left\lceil #1 \right\rceil^{#2} }
\newcommand*\axiomfont[1]{\textsf{\textup{#1}}}
\newcommand{\one}{\mathop{1\hskip-3pt {\rm l}}} 			
\newcommand{\cone}[1]{{\mathbb P\mathrel{\downarrow}#1}}
\newcommand{\conea}[1]{{\mathbb A\mathrel{\downarrow}#1}}
\newcommand{\conez}[2]{{\mathbb P_{#1}\mathrel{\downarrow}#2}}
\newcommand{\fork}[2][]{{\pitchfork_{#1}}(#2)}  			
\def\pI{\textrm{\bf I}}	
\def\pII{\textrm{\bf II}}
\newcommand{\sch}{\axiomfont{SCH}}
\renewcommand{\restriction}{\mathbin\upharpoonright}
\renewcommand{\mid}{\mathrel{|}\allowbreak}
\newcommand\z[1]{\mathring{#1}}
\DeclareMathOperator{\tp}{\mathsf{tp}}
\DeclareMathOperator{\mtp}{mtp}
\DeclareMathOperator{\CONE}{cone}
\DeclareMathOperator{\refl}{Refl}
\DeclareMathOperator{\ord}{Ord}
\DeclareMathOperator{\nacc}{nacc}
\DeclareMathOperator{\dom}{dom}
\DeclareMathOperator{\CPP}{CPP}
\DeclareMathOperator{\otp}{otp}
\DeclareMathOperator{\acc}{acc}
\DeclareMathOperator{\rng}{Im}
\DeclareMathOperator{\tr}{Tr}
\DeclareMathOperator{\cf}{cf}
\DeclareMathOperator{\cl}{cl}
\title[Sigma-Prikry forcing II]{Sigma-Prikry forcing II:\\Iteration Scheme}
\author{Alejandro Poveda}
\address{Einstein Institute of Mathematics, Hebrew University of Jerusalem, Edmond J. Safra Campus, Givat-Ram 91904, Israel. }
\thanks{Poveda was partially supported by the Spanish
Government under grant MTM2017-86777-P, by Generalitat de Catalunya (Catalan Government) under grant SGR 270-2017 and by MECD Grant FPU15/00026.}
\author{Assaf Rinot}
\address{Department of Mathematics, Bar-Ilan University, Ramat-Gan 5290002, Israel.}
\urladdr{http://www.assafrinot.com}
\thanks{Rinot was partially supported by the European Research Council (grant agreement ERC-2018-StG 802756) and by the Israel Science Foundation (grant agreement 2066/18).}
\author{Dima Sinapova}
\address{Department of Mathematics, Statistics, and Computer Science\\
         University of Illinois at Chicago\\
         Chicago, IL 60607-7045\\
         USA}
\urladdr{https://homepages.math.uic.edu/~sinapova/}
\thanks{Sinapova was partially supported by the National Science Foundation, Career-1454945.}
\begin{document}
\date{January 16, 2022}

\begin{abstract}  In Part I of this series \cite{partI}, we introduced a class of notions of forcing which we call $\Sigma$-Prikry,
and showed that many of the known Prikry-type notions of forcing that center around singular cardinals of countable cofinality
are $\Sigma$-Prikry. We showed that given a $\Sigma$-Prikry poset $\mathbb P$ and a $\mathbb{P}$-name for a non-reflecting stationary set $T$,
there exists a corresponding $\Sigma$-Prikry poset that projects to $\mathbb P$ and kills the stationarity of $T$.
In this paper, we develop a general scheme for iterating $\Sigma$-Prikry posets and, 
as an application, we blow up the power of a countable limit of Laver-indestructible supercompact cardinals,
and then iteratively kill all non-reflecting stationary subsets of its successor. 
This yields a model in which the singular cardinal hypothesis fails and
simultaneous reflection of finite families of stationary sets holds.
\end{abstract}
\maketitle
\tableofcontents

\section{Introduction}

In the introduction to Part~I of this series \cite{partI}, we described the need for iteration schemes
and the challenges involved in devising such schemes, especially at the level of successor of singular cardinals.
The main tool available to obtain consistency results at the level of singular cardinals and their successors
is the method of forcing with large cardinals and, in particular, \emph{Prikry-type forcings}.
By Prikry-type forcings one usually means a poset $\mathbb P=(P,\le)$ having the following property.
\begin{prikry} There exists an ordering $\le^*$ on $P$ coarser than $\le$ (typi\-cally, of a better closure degree)
satisfying that for every sentence $\varphi$ in the forcing language
and every $p\in P$ there exists $q\in P$ with $q\le^* p$
deciding $\varphi$.
\end{prikry}

In this paper, we develop an iteration scheme for Prikry-type posets, specifically, for the class of \emph{$\Sigma$-Prikry forcings}  
that we introduced in \cite{partI} (see Definition~\ref{SigmaPrikry} below).
Of course, viable iteration schemes for Prikry-type posets already exists,
namely, the Magidor iteration and the Gitik iteration (see \cite[\S6]{Gitik-handbook}).
In both these cases
the ordering $\le^*$ witnessing the Prikry Property of the iteration
can be roughly described as the finite-support iteration of the $\le^*$-orderings of its components.
As the expectation from the final $\le^*$ is to have an eventually-high closure degree,
the two schemes are typically useful in the context where
one carries an iteration $\langle \mathbb{P}_\alpha; \dot{\mathbb{Q}}_\alpha\mid \alpha<\rho\rangle$
with each $\dot{\mathbb{Q}}_\alpha$ being a $\mathbb P_\alpha$-name for either a trivial forcing,
or a Prikry-type forcing concentrating on the combinatorics of the inaccessible cardinal $\alpha$. This should be compared with the  iteration to
control the power function $\alpha\mapsto2^\alpha$ below some cardinal $\rho$.

In contrast, in this paper, we are interested in carrying out an iteration of length $\kappa^{++}$,
where $\kappa$ is a singular cardinal (or, more generally, forced by the first step of the iteration to become one), and all components of the iteration
are Prikry-type forcings that concentrate on the combinatorics of $\kappa$ or its successor.
For this, we will need to allow a support of arbitrarily large size below $\kappa$.
To be able to lift the Prikry property through an infinite-support iteration,
members of the $\Sigma$-Prikry class are thus required to possess the following stronger property, which is inspired by the concepts coming from the study of topological Ramsey spaces \cite{MR2603812}.
\begin{cpp}
There is a partition of the ordering $\le$ into countably many relations $\langle {\le^n}\mid n<\omega\rangle$ such that,
if we denote $\CONE_n(q):=\{ r\mid r\le^n q\}$,
then, for every $0$-open $U\s P$ (i.e., $q\in U\implies\CONE_0(q)\s U$),
every $p\in P$ and every $n<\omega$, there exists $q\le^0 p$
such that  $\CONE_n(q)$ is either a subset of $U$ or disjoint from $U$.
\end{cpp}
To maintain the above property along the iteration we demand on our posets to satisfy  \emph{property $\mathcal{D}$} 
(Definition~\ref{PropertyD} below). Succinctly, this property is a game-theoretic abstraction of a standard approach for verifying the Prikry property;
it asserts that $\pI$ has a winning strategy in a two-player game in which $\pI$ (the `good' player) works towards \emph{diagonalizing} the sequence of conditions produced by $\pII$ (the `bad' player). 

\smallskip

Another parameter that requires attention when devising an iteration scheme is the chain condition of the components to be used.
In view of the goal of solving a problem concerning the combinatorics of $\kappa$ or its successor
through an iteration of length $\kappa^{++}$,
there is a need to know that all counterexamples to our problem will show up at some intermediate stage of the iteration,
so that we at least have the chance to kill them all.
The standard way to secure the latter is to require that
the whole iteration $\mathbb P_{\kappa^{++}}$ would have the $\kappa^{++}$-chain condition ($\kappa^{++}$-cc).
As the $\kappa$-support iteration of $\kappa^{++}$-cc posets need not have the $\kappa^{++}$-cc (see \cite{roslanowski} for an explicit counterexample),
members of the $\Sigma$-Prikry class are required to satisfy
the following strong form of the $\kappa^{++}$-cc:
\begin{zlinked} There exists a map $c:P\rightarrow\kappa^+$ satisfying that for all $p,q\in P$,
if $c(p)=c(q)$, then $p$ and $q$ are compatible, and, furthermore,
$\CONE_0(p)\cap\CONE_0(q)$ is nonempty.
\end{zlinked}

In particular, our verification of the chain condition of $\mathbb P_{\kappa^{++}}$ will not go through the $\Delta$-system lemma;
rather, we will take advantage of a basic fact concerning the density of box products of topological spaces.

\medskip

Now that we have a way to ensure that all counterexamples show up
at intermediate stages, we fix a bookkeeping list $\langle z_\alpha\mid\alpha<\kappa^{++}\rangle$,
and shall want
that, for any $\alpha<\kappa^{++}$,
$\mathbb P_{\alpha+1}$ will amount to forcing over the model
$V^{\mathbb P_\alpha}$
to solve a problem suggested by $z_\alpha$.
The standard approach to achieve this is to set $\mathbb P_{\alpha+1}:=\mathbb P_\alpha*\dot{\mathbb Q}_\alpha$,
where $\dot{\mathbb Q}_\alpha$ is a $\mathbb P_\alpha$-name for a poset that takes care of $z_\alpha$.
However, the disadvantage of this approach is that
if $\mathbb P_1$ is a notion of forcing that blows up $2^\kappa$,
then any typical poset $\mathbb Q_1$ in $V^{\mathbb P_1}$ which is designed to add a subset of $\kappa^+$ via bounded approximations
will fail to have the $\kappa^{++}$-cc.
To work around this,
in our scheme, we set $\mathbb P_{\alpha+1}:=\mathbb A(\mathbb P_\alpha,z_\alpha)$,
where $\mathbb A({\cdot},{\cdot})$ is a functor that,
to each $\Sigma$-Prikry poset $\mathbb P$ and a problem $z$, produces a $\Sigma$-Prikry poset $\mathbb A(\mathbb P,z)$
that projects onto $\mathbb P$ and solves the problem $z$.
A key feature of this functor is that the projection from $\mathbb A(\mathbb P, z)$ to $\mathbb P$
\emph{splits}, that is, in addition to a projection map $\pi$ from $\mathbb A(\mathbb P, z)$ onto $\mathbb P$,
there is a map $\pitchfork$ that goes in the other direction, and the two maps commute in a very strong sense.
The exact details may be found in our definition of \emph{forking projection} (see Definition~\ref{forking} below).

\medskip

A special case of the main result of this paper may be roughly stated as follows.
\begin{MAIN} Suppose that $\Sigma=\langle\kappa_n\mid n<\omega\rangle$ is a strictly increasing sequence of regular uncountable cardinals,
converging to a cardinal $\kappa$.
For simplicity, let us say that a notion of forcing $\mathbb P$ is \emph{nice}
if it has property $\mathcal D$, $\mathbb{P}\s H_{\kappa^{++}}$ and $\mathbb P$ does not collapse $\kappa^+$.
Now, suppose that:
\begin{itemize}
\item $\mathbb{Q}$ is a nice $\Sigma$-Prikry notion of forcing;
\item $\mathbb A({\cdot},{\cdot})$ is a functor that produces for every
nice $\Sigma$-Prikry notion of forcing $\mathbb P$,
and every $z\in H_{\kappa^{++}}$,
a corresponding nice $\Sigma$-Prikry notion of forcing $\mathbb A(\mathbb P,z)$. 
Moreover, $\mathbb A({\cdot},{\cdot})$ admits a forking projection to $\mathbb P$ with the weak mixing property;
\item  $2^{2^\kappa}=\kappa^{++}$, so that we may fix a bookkeeping list $\langle z_\alpha\mid\alpha<\kappa^{++}\rangle$.
\end{itemize}
Then there exists a sequence $\langle \mathbb P_\alpha\mid\alpha\le\kappa^{++}\rangle$
of forcings
such that $\mathbb P_1$ is isomorphic to $\mathbb Q$,
$\mathbb P_{\alpha+1}$ is isomorphic to $\mathbb A(\mathbb P_\alpha,z_\alpha)$,
and, for every pair $\alpha\le\beta\le\kappa^{++}$, $\mathbb P_\beta$ projects onto $\mathbb P_\alpha$.  Moreover, if for each nonzero limit ordinal $\alpha\leq \kappa^{++}$, a certain canonical subforcing $\z{\mathbb{P}}_\alpha$
of $\mathbb{P}_\alpha$ is dense in $\mathbb P_\alpha$,
then $\langle \mathbb P_\alpha\mid\alpha\le\kappa^{++}\rangle$ consists of nice $\Sigma$-Prikry forcings.
\end{MAIN}

\subsection{Organization of this paper}
In Section~\ref{SPS}, we recall the definitions of the $\Sigma$-Prikry class,
forking projections, and introduce property $\mathcal D$ and the weak mixing property.

In Section~\ref{Iteration}, we present our abstract iteration scheme for $\Sigma$-Prikry posets,
and prove the Main Theorem of this paper (see  Lemmas~\ref{CvIteration} and \ref{CivIteration}).

In Section~\ref{ReflectionAfterIteration}, we present the very first application of our scheme.
We carry out an iteration of length $\kappa^{++}$,
where the first step of the iteration is the \emph{Extender Based Prikry Forcing} due to Gitik and Magidor \cite[\S3]{Git-Mag} for making $2^\kappa=\kappa^{++}$,
and all the later steps are obtained by invoking the functor $\mathbb A(\mathbb P,z)$ from \cite[\S6]{partI}
for killing a nonreflecting stationary set decoded from a $\mathbb{P}$-name $z$.
This functor is due to Sharon \cite[\S2]{AS}, 
and as a corollary, we obtain a streamlined proof of the main result of \cite[\S3]{AS}:

\begin{COR}If $\kappa$ is the limit of a countable increasing sequence of supercompact cardinals,
then there exists a cofinality-preserving forcing extension
in which $\kappa$ remains a strong limit,
every finite collection of stationary subsets of $\kappa^+$ reflects simultaneously, and $2^\kappa=\kappa^{++}$.
\end{COR}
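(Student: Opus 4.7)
The plan is as follows. First I would pass to a preparatory model $V$ in which each $\kappa_n$ has been made Laver-indestructible under $\kappa_n$-directed closed forcing and $2^{2^\kappa}=\kappa^{++}$ has been arranged (via standard GCH-securing forcing above $\kappa$ after Laver preparation). Set $\Sigma=\langle\kappa_n\mid n<\omega\rangle$. In $V$, I would fix a bookkeeping list $\langle z_\alpha\mid\alpha<\kappa^{++}\rangle$ enumerating codes for all potential candidates; since every nice $\Sigma$-Prikry poset sits inside $H_{\kappa^{++}}$ and $|H_{\kappa^{++}}|=\kappa^{++}$, the family of conceivable $\mathbb{P}_\alpha$-names for stationary subsets of $\kappa^+$ across all $\alpha<\kappa^{++}$ can be enumerated in order-type $\kappa^{++}$ in $V$.

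Next, let $\mathbb{Q}$ be Extender-Based Prikry Forcing on $\kappa$, which by Section~\ref{ebpf} is nice $\Sigma$-Prikry, singularizes $\kappa$ to cofinality $\omega$, preserves cardinals, and forces $2^\kappa=\kappa^{++}$. Let $\mathbb{A}(\cdot,\cdot)$ denote the functor from Part~I which, given a nice $\Sigma$-Prikry poset $\mathbb{P}$ and a $\mathbb{P}$-name $z$ for a non-reflecting stationary subset of $\kappa^+$, produces a nice $\Sigma$-Prikry extension admitting a forking projection to $\mathbb{P}$ and killing the stationarity of $z$ (and $\mathbb{A}(\mathbb{P},z)=\mathbb{P}$ when $z$ does not name such a set). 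Feeding these data into the Main Theorem yields an iteration $\langle\mathbb{P}_\alpha\mid\alpha\le\kappa^{++}\rangle$ of nice $\Sigma$-Prikry forcings with coherent projections. Let $G$ be $\mathbb{P}_{\kappa^{++}}$-generic.

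I would then verify the properties of $V[G]$. The Linked$_0$ property of $\mathbb{P}_{\kappa^{++}}$ yields the $\kappa^{++}$-chain condition, so cardinals $\ge\kappa^{++}$ are preserved and cardinal arithmetic above is controlled; niceness together with the closure of the direct-extension ordering preserves $\kappa^+$ and prevents new bounded subsets of $\kappa$ beyond those coming from $\mathbb{Q}$, so $\kappa$ remains a strong limit. The equality $2^\kappa=\kappa^{++}$ is forced by $\mathbb{P}_1\cong\mathbb{Q}$ and maintained by the chain condition. For simultaneous reflection, given stationary $S_1,\dots,S_m\subseteq\kappa^+$ in $V[G]$, I would apply the standard combinatorial reduction that converts a failure of simultaneous reflection into a single non-reflecting stationary subset $T$ of $\kappa^+$ (via pressing down on an appropriate diagonal set of points of some fixed regular cofinality $\mu<\kappa$). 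By the $\kappa^{++}$-cc, $T$ already appears at some stage $\alpha<\kappa^{++}$; bookkeeping places $T$ as $z_\beta$ for some $\beta\ge\alpha$, and then $\mathbb{P}_{\beta+1}=\mathbb{A}(\mathbb{P}_\beta,z_\beta)$ kills the stationarity of $T$, contradicting its survival to $V[G]$.

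The main obstacle is the simultaneous-reflection step: it requires that the reduction from failure of simultaneous reflection actually deliver a \emph{non}-reflecting stationary set, not merely a stationary one, and that cofinalities close to $\kappa$ be handled using the supercompactness remnants preserved by indestructibility. A secondary obstacle is checking that the ground-model bookkeeping genuinely catches every name arising in every intermediate extension, which amounts to an $H_{\kappa^{++}}$-counting argument supplied by the $\kappa^{++}$-cc together with niceness of each $\mathbb{P}_\alpha$.
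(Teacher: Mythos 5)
Your overall architecture (Laver preparation followed by a further arithmetic-controlling forcing, EBPF at step one, then the functor from Part~I fed into the Main Theorem with bookkeeping, using $\kappa^{++}$-cc to capture names at intermediate stages) matches the paper's. The preparation differs only cosmetically: the paper forces with $\mathbb L$ (Laver iterations for each $\kappa_n$) and then $\mathrm{Add}(\kappa^{++},1)$, obtaining the specific hypotheses $2^\kappa=\kappa^+$ and $2^{\kappa^+}=\kappa^{++}$ needed for the iteration; your weaker constraint $2^{2^\kappa}=\kappa^{++}$ is not quite enough, because the iteration scheme genuinely needs $\mu^{<\mu}=\mu$ and $|H_{\mu^+}|=\mu^+$ with $\mu=\kappa^+$.

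The real gap, which you yourself flag as ``the main obstacle,'' is the simultaneous-reflection step, and the approach you sketch does not close it. You propose to reduce a failure of simultaneous reflection of $S_1,\dots,S_m$ to a single non-reflecting stationary $T\subseteq\kappa^+$ and then kill $T$ via the functor; but no such ``standard combinatorial reduction'' is available in the needed form, and moreover the functor $\mathbb A(\cdot,\cdot)$ (Fact~\ref{onestep}) only acts on names for stationary subsets of $(E^\mu_\omega)^V$ that fail to reflect into $\Gamma=\{\alpha<\mu\mid\omega<\cf^V(\alpha)<\kappa\}$; it does not kill arbitrary non-reflecting stationary subsets of $\kappa^+$. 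The paper's route is structurally different: it decomposes $\refl({<}\omega,\kappa^+)$ as $\refl({<}\omega,E^\mu_{<\kappa})$, then (i) obtains $\refl({<}\omega,\Gamma)$ for free from the preserved supercompactness via Fact~\ref{c27}, (ii) uses the iteration \emph{only} to secure the single-set assertion $\refl({<}2,(E^\mu_\omega)^V,\Gamma)$, and (iii) combines (i) and (ii) through Proposition~\ref{p3} to deduce $\refl({<}\omega,(E^\mu_\omega)^V\cup\Gamma)$. Without step (i) and the gluing lemma \ref{p3}, the iteration alone cannot deliver full simultaneous reflection on $\kappa^+$, so your reduction would need to be replaced by precisely this two-pronged decomposition.
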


\subsection{Notation and conventions} Our forcing convention is that $p\le q$ means that $p$ extends $q$.
We write $\cone{q}$ for $\{ p\in\mathbb P\mid p\le q\}$.
We will follow the common convention of using
dotted free variables in forcing statement for forcing names and using undotted variables for canonical names for sets from the ground model.
In some instances, to stress that we are dealing with canonical names, we will be using the classical \emph{check name} notation.
Denote $E^\mu_{\theta}:=\{\alpha<\mu\mid \cf(\alpha)=\theta\}$. The sets $E^\mu_{<\theta}$ and $E^\mu_{>\theta}$ are defined in a similar fashion.
For a stationary subset $S$ of a regular uncountable cardinal $\mu$, we write $\tr(S):=\{\gamma\in E^\mu_{>\omega}\mid S\cap\gamma\text{ is stationary in }\gamma\}$.
$H_\nu$ denotes the collection of all sets of hereditary cardinality less than $\nu$.
For every set of ordinals $x$, we denote $\cl(x):=\{ \sup(x\cap\gamma)\mid \gamma\in\ord, x\cap\gamma\neq\emptyset\}$,  $\acc(x):=\{\gamma\in x\mid \sup(x\cap\gamma)=\gamma>0\}$ and $\nacc(x):=x\setminus \acc(x)$.

\section{The $\Sigma$-Prikry class and forking projections}\label{SPS}

In this section, we recall some definitions and facts from \cite[\S2]{partI} and \cite[\S4]{partI},
and then continue developing the theory of \emph{forking projections}.
Familiarity with \cite{partI} is not assumed here.

\subsection{The $\Sigma$-Prikry class and Property $\mathcal D$}
\begin{definition}\label{gradedposet} We say that $(\mathbb P,\lh)$ is a \emph{graded poset}
iff $\mathbb P=(P,\le)$ is a poset, $\lh:P\rightarrow\omega$ is a surjection, and, for all $p\in P$:
\begin{itemize}
\item  For every $q\le p$, $\lh(q)\geq\lh(p)$;
\item  There exists $q\le p$ with $\lh(q)=\lh(p)+1$.
\end{itemize}
\end{definition}
\begin{conv} For a graded poset as above,
we denote $P_n:=\{p\in P\mid \lh(p)=n\}$,
$P_n^p:=\{ q\in P\mid  q\le p, \lh(q)=\lh(p)+n\}$,
and sometimes write $q\le^n p$ (and say the $q$ is \emph{an $n$-step extension} of $p$) rather than writing $q\in P^p_n$.
\end{conv}

\begin{definition}\label{SigmaPrikry}
Suppose that $\mathbb P=(P,\le)$ is a notion of forcing with a greatest element $\one$,
and that $\Sigma=\langle \kappa_n\mid n<\omega\rangle$ is a non-decreasing sequence of regular uncountable cardinals,
converging to some cardinal $\kappa$.
Suppose that $\mu$ is a cardinal such that $\one\forces_{\mathbb P}\check\mu=\kappa^+$.
For functions $\lh:P\rightarrow\omega$ and $c:P\rightarrow \mu$,
we say that $(\mathbb P,\lh,c)$ is \emph{$\Sigma$-Prikry} iff all of the following hold:
\begin{enumerate}
\item\label{c4} $(\mathbb P,\lh)$ is a graded poset;
\item\label{c2} For all $n<\omega$, $\mathbb P_n:=(P_n\cup\{\one\},\le)$ contains a dense subposet $\z{\mathbb P}_n$
which is $\kappa_n$-directed-closed;
\item\label{c1} For all $p,q\in P$, if $c(p)=c(q)$, then $P_0^p\cap P_0^q$ is non-empty;
\item\label{c5} For all $p\in P$, $n,m<\omega$ and $q\le^{n+m}p$, the set $\{r\le^n p\mid  q\le^m r\}$ contains a greatest element which we denote by $m(p,q)$.\footnote{By convention, a greatest element, if exists, is unique.}
In the special case $m=0$, we shall write $w(p,q)$ rather than $0(p,q)$;\footnote{
Note that $w(p,q)$ is the weakest $n$-step extension of $p$  above $q$.}
\item\label{csize} For all $p\in P$,
the set $W(p):=\{w(p,q)\mid q\le p\}$ has size $<\mu$;
\item\label{itsaprojection} For all $p'\le p$ in $P$, $q\mapsto w(p,q)$ forms an order-preserving map from $W(p')$ to $W(p)$;
\item\label{c6}  Suppose that $U\s P$ is a $0$-open set, i.e., $r\in U$ iff $P^r_0\s U$.
Then, for all $p\in P$ and $n<\omega$, there is $q\le^0 p$, such that, either $P^{q}_n\cap U=\emptyset$ or $P^{q}_n\s U$.
\end{enumerate}
\end{definition}

\begin{remark}
\begin{enumerate}[label=(\roman*)]
\item Clause~\eqref{c2} differs from that of \cite[Definition~2.3]{partI}, where we originally required $\mathbb P_n$ itself to be $\kappa_n$-directed-closed.
\item Clause~\eqref{c1} is the Introduction's \emph{Linked$_0$ property}.
Often, we will want to avoid encodings and opt to define the function $c$ as a map from $P$ to some natural set $\mathfrak M$ of size $\leq\mu$,
instead of a map to the cardinal $\mu$ itself. In the special case that $\mu^{<\mu}=\mu$,
we shall simply take $\mathfrak M$ to be $H_{\mu}$.
\item Clause~\eqref{c6} is the \emph{Complete Prikry Property} (CPP).
\end{enumerate}
\end{remark}

\begin{definition}
Let $p\in P$. For each $n<\omega$, we write $W_n(p):=\{ w(p,q)\mid q\in P^p_n\}$,
and $W_{\ge n}(p):=\{ w(p,q)\mid \exists m\in\omega\setminus n\,(q\in P^p_m)\}$.
The object $W(p):=\bigcup_{n<\omega} W_n(p)$ is called \emph{the $p$-tree}.
\end{definition}

\begin{fact}[{\cite[Lemma~2.8]{partI}}]\label{lemma7} Let $p\in P$.
\begin{enumerate}
\item For every $n<\omega$, $W_n(p)$ is a maximal antichain in $\cone{p}$;
\item Every two compatible elements of $W(p)$ are comparable;
\item For any pair $q'\le q$ in $W(p)$,  $q'\in W(q)$;
\item $c\restriction W(p)$ is injective.
\end{enumerate}
\end{fact}

\begin{fact}[{\cite[Lemma~2.10]{partI}}]\label{l14}\hfill
\begin{enumerate}
\item \label{C1l14} $\mathbb P$ does not add bounded subsets of $\kappa$;
\item \label{C2l14} For every regular cardinal $\nu\geq\kappa$, if there exists $p\in P$ for which $p\forces_{\mathbb P}\cf(\nu)<\kappa$,
then there exists $p'\le p$ with $|W(p')|\geq\nu$.\footnote{For future reference, we point out that this fact relies only on Clauses (\ref{c4}), (\ref{c2}), (\ref{c5}) and (\ref{c6}) of Definition~\ref{SigmaPrikry}.
Furthermore, we do not need to know that $\one$ decides a value for $\kappa^+$.}
\end{enumerate}
\end{fact}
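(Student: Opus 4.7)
The plan is to prove (\ref{C1l14}) and (\ref{C2l14}) in turn, both by iterated applications of the Complete Prikry Property (Clause~(\ref{c6})) combined with the $\kappa_m$-directed closure of $\mathbb P_m$ (Clause~(\ref{c2})) at appropriate levels, using Clauses~(\ref{c4}) and (\ref{c5}) to propagate decisions through the graded structure.

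For (\ref{C1l14}), suppose $p\forces_{\mathbb P}\dot A\s\check\theta$ with $\theta<\kappa$. First, Clause~(\ref{c4}) lets me extend $p$ to $p_0$ at some level $m$ satisfying $\kappa_m>\theta$. Enumerating $\theta\times\omega$ as $\langle(\alpha_i,n_i)\mid i<\theta\rangle$, I recursively build a $\le^0$-decreasing sequence $\langle p_i\mid i<\theta\rangle$ in $\mathbb P_m$: at successor stages, apply CPP twice, once to the $0$-open set $\{r\mid r\forces\check\alpha_i\in\dot A\}$ and once to $\{r\mid r\forces\check\alpha_i\notin\dot A\}$, both at level $n_i$, thereby homogenizing $P^{p_{i+1}}_{n_i}$ against both; at limits, invoke $\kappa_m$-directed closure, valid since $i<\theta<\kappa_m$. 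The resulting condition $p^*$ satisfies that for each $\alpha<\theta$ and each $n<\omega$, $P^{p^*}_n$ is homogeneous for both sentences. For each $\alpha$, density produces some $r\le p^*$ deciding $\check\alpha\in\dot A$; this $r$ lies in some $P^{p^*}_n$, whose homogeneity propagates the decision to all of $P^{p^*}_n$. Clause~(\ref{c5}) then shows any longer-length extension of $p^*$ factors through $P^{p^*}_n$ via the $m(p^*,\cdot)$ map and inherits the same decision; density lifts this to $p^*$ itself, so $p^*\forces\dot A=\check A$ with $A:=\{\alpha<\theta\mid p^*\forces\check\alpha\in\dot A\}\in V$.

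For (\ref{C2l14}), by (\ref{C1l14}) I may assume $p$ decides $\cf(\check\nu)=\check\lambda$ for some $\lambda<\kappa$ and forces $\dot f:\check\lambda\to\check\nu$ cofinal, and by Clause~(\ref{c4}) I extend $p$ to level $m$ with $\kappa_m>\lambda$. For each $\beta<\lambda$, the set $D_\beta:=\{r\le p\mid r\text{ decides }\dot f(\check\beta)\}$ is open and dense below $p$, and it is $0$-open (any downward-closed set is $0$-open, since $r\in P^r_0$ always). Iterating CPP over $n<\omega$ with $\kappa_m$-directed closure, and invoking density of $D_\beta$ to pick the right level, yields $p_\beta\le^0 p$ and $n_\beta<\omega$ with $P^{p_\beta}_{n_\beta}\s D_\beta$. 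A further iteration over $\beta<\lambda$ using $\kappa_m$-directed closure (valid since $\lambda<\kappa_m$) produces a common $p^*\le^0 p$ with $P^{p^*}_{n_\beta}\s D_\beta$ for every $\beta$. The key observation is that $W_{n_\beta}(p^*)\s P^{p^*}_{n_\beta}$ by the very definition of $W_n$, so every $w\in W_{n_\beta}(p^*)$ itself decides $\dot f(\check\beta)$; set $H(\beta,w)<\nu$ to be the decided value.

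The finish is a cofinality count. For any $\mathbb P$-generic $G\ni p^*$, density places some $r\in G\cap P^{p^*}_{n_\beta}$, and upward closure places $w(p^*,r)\in G\cap W_{n_\beta}(p^*)$; thus $\dot f(\beta)^G=H(\beta,w(p^*,r))$, so $H[\lambda\times W(p^*)]$ is cofinal in $\nu$ in $V[G]$, and being a ground-model set of ordinals, also cofinal in $\nu$ in $V$. Regularity of $\nu$ gives $|H[\lambda\times W(p^*)]|\geq\nu$, while $|H[\lambda\times W(p^*)]|\leq\lambda\cdot|W(p^*)|$ together with $\lambda<\kappa\leq\nu$ forces $|W(p^*)|\geq\nu$. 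The step I expect to be the main technical obstacle is establishing the strong Prikry-type conclusion $P^{p_\beta}_{n_\beta}\s D_\beta$ from raw CPP using only Clauses~(\ref{c4}), (\ref{c2}), (\ref{c5}), and (\ref{c6}) as stipulated in the footnote, and interleaving the many CPP applications with directed-closure limits coherently; the critical enabling observation is the automatic factoring $W_{n_\beta}(p^*)\s P^{p^*}_{n_\beta}$ built into the graded structure, which circumvents any need for additional fibre-wise refinements to align decisions.
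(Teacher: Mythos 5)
Your proof is correct and uses the natural strategy: iterated applications of the Complete Prikry Property (Clause~(\ref{c6})), with fusion supplied by the $\kappa_m$-directed closure (Clause~(\ref{c2})) at a level $m$ chosen so that $\kappa_m$ exceeds the length of the recursion, and with Clause~(\ref{c5}) used to propagate decisions across lengths via the $m(p,\cdot)$ and $w(p,\cdot)$ maps. Since the paper records this as a \emph{Fact} imported from Part~I without reproducing the argument, there is no in-paper proof to compare against, but this is clearly the intended argument and respects the footnote's claim that only Clauses~(\ref{c4}), (\ref{c2}), (\ref{c5}) and~(\ref{c6}) are used.

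Two small points worth tightening. First, in the finish of~(\ref{C2l14}) you say ``density places some $r\in G\cap P^{p^*}_{n_\beta}$''; the set $P^{p^*}_{n_\beta}$ is not dense below $p^*$ (conditions of length $>\lh(p^*)+n_\beta$ need not lie in it), but it \emph{is} predense below $p^*$: given $r\le p^*$ with $\lh(r)\ge\lh(p^*)+n_\beta$, Clause~(\ref{c5}) yields $s\in P^{p^*}_{n_\beta}$ with $r\le s$, and for shorter $r$ the graded structure supplies an extension into $P^{p^*}_{n_\beta}$. That predensity is exactly what puts a member of $P^{p^*}_{n_\beta}$ into $G$, after which upward closure puts $w(p^*,r)\in G$ as you say. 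Second, your opening move in~(\ref{C2l14}) invokes~(\ref{C1l14}) to justify that $\cf^{V[G]}(\nu)$ is a ground-model ordinal; this is automatic (all ordinals of $V[G]$ are ordinals of $V$) and so part~(\ref{C1l14}) is not actually needed there, which is consonant with the footnote listing only Clauses~(\ref{c4}), (\ref{c2}), (\ref{c5}), (\ref{c6}) as prerequisites. Neither point affects correctness.
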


\begin{definition} We say that $\vec r=\langle r_\xi \mid \xi<\chi\rangle$ is a \emph{good enumeration} of a set $A$ iff
$\vec r$ is injective, 
$\chi$ is a cardinal,
and $\{ r_\xi\mid \xi<\chi\}=A$.
\end{definition}

\begin{definition}[Diagonalizability]\label{Diagonalizability}
Given $p\in P$, $n<\omega$, and a good enumeration
$\vec r=\langle r_\xi\mid \xi<\chi\rangle$ of $W_n(p)$,
we say that $\vec q=\langle q_\xi\mid \xi<\chi\rangle$ is \emph{diagonalizable} (with respect to $\vec{r}$)  iff the two hold:
\begin{enumerate}[label=(\alph*)]
\item $q_\xi\le^0 r_\xi$ for every $\xi<\chi$;
\item there is $p'\le^0 p$ such that for every $q'\in W_n(p')$, $q'\le^0 q_\xi$, where $\xi$ is the unique index to satisfy $r_\xi=w(p, q')$.
\end{enumerate}
\end{definition}

\begin{definition}[Diagonalizability game]\label{DiagonalizabilityGame}
Given $p\in P$, $n<\omega$, a good enumeration  $\vec r=\langle r_\xi\mid \xi<\chi\rangle$ of $W_n(p)$,
and a dense subset $D$ of $\mathbb{P}_{\ell_\mathbb{P}(p)+n}$, $\Game_\mathbb{P}(p,\vec r,D)$ is a game of length $\chi$ between two players $\pI$ and $\pII$, defined as follows:
\begin{itemize}
\item At stage $\xi<\chi$, $\pI$ plays a condition $p_\xi\leq^0 p$ compatible with $r_\xi$,
and then $\pII$ plays $q_\xi\in D$ such that $q_\xi\leq p_\xi$ and $q_\xi\leq^0 r_\xi$;
\item $\pI$ wins the game iff the resulting sequence $\vec q=\langle q_\xi\mid \xi<\chi\rangle$ is diagonalizable.
\end{itemize}
In the special case that $D$ is all of $\mathbb{P}_{\ell_\mathbb{P}(p)+n}$, we omit it, writing $\Game_\mathbb{P}(p,\vec r)$.
\end{definition}

The following  lemma  will be useful later.

\begin{lemma}\label{propertyDatdense}
Given $p\in P$, $n<\omega$, a good enumeration  $\vec r$ of $W_n(p)$,
and a dense subset $D$ of $\mathbb{P}_{\ell_\mathbb{P}(p)+n}$,
$\pI$ has a winning strategy for $\Game_\mathbb{P}(p,\vec{r},D)$
iff it has a winning strategy for $\Game_\mathbb{P}(p,\vec{r})$.
\end{lemma}
\begin{proof} Only the forward implication requires an argument.
Write $\vec r$ as $\langle r_\xi\mid \xi<\chi\rangle$;
we shall describe a winning strategy for $\pI$ in the game $\Game_\mathbb{P}(p,\vec{r})$
by producing sequences of the form $\langle (p_\eta,q_\eta,q_\eta')\mid \eta<\xi\rangle$,
where $\langle (p_\eta,q_\eta)\mid \eta<\xi\rangle$ is an initial play (consisting of $\xi$ rounds)  in the game
$\Game_\mathbb{P}(p,\vec{r})$,
and $\langle (p_\eta,q_\eta')\mid \eta<\xi\rangle$ is an initial play in the game $\Game_\mathbb{P}(p,\vec{r},D)$.

Assuming that $\pI$ has a winning strategy for $\Game_\mathbb{P}(p,\vec{r},D)$,
here is a description of our winning strategy for $\pI$ in the game $\Game_\mathbb{P}(p,\vec{r})$:

$\br$ For $\xi=0$, we play a condition $p_0$ according to the winning strategy of $\pI$ in the game $\Game_\mathbb{P}(p,\vec{r},D)$.
Then, $\pII$ plays $q_0\leq p_0$ such that $q_0\leq^0 r_0$.
Since $D$ is dense in $\mathbb{P}_{\ell_\mathbb{P}(p)+n}$, we then pick $q_0'\in D$ with $q_0'\le^0 q_0$.

$\br$ Suppose that $\xi<\chi$ is nonzero and that $\langle (p_\eta,q_\eta,q_\eta')\mid \eta<\xi\rangle$ has already been defined.
Let $p_\xi$ be given by the winning strategy of $\pI$ for the game $\Game_\mathbb{P}(p,\vec{r},D)$
with respect to the initial play $\langle (p_\eta,q_\eta')\mid \eta<\xi\rangle$.
Then, $\pII$ plays $q_\xi\leq p_\xi$ such that $q_\xi\leq^0 r_\xi$.
Finally, pick $q_\xi'\in D$ such that $q_\xi'\le^0 q_\xi$.

At the end of the above process, since $\langle (p_\xi,q_\xi')\mid \xi<\chi\rangle$ is a play in the game $\Game_\mathbb{P}(p,\vec{r},D)$
using the winning strategy of $\pI$, we may fix $p'\le^0p$ witnessing that
$\langle q_\xi'\mid \xi<\chi\rangle$ is diagonalizable.
So, for every $q'\in W_n(p')$,
if $\xi$ is the unique index to satisfy $r_\xi=w(p, q')$,
then $q'\le^0 q_\xi'\le^0 q_\xi$. In particular, $p'$ witnesses that
$\langle q_\xi\mid \xi<\chi\rangle$ is diagonalizable, as desired.
\end{proof}

\begin{definition}[Property $\mathcal{D}$]\label{PropertyD}
We say that $(\mathbb{P},\ell_\mathbb{P})$ has \emph{property $\mathcal{D}$}
iff for any $p\in P$, $n<\omega$ and any good enumeration  $\vec r=\langle r_\xi\mid \xi<\chi\rangle$ of $W_n(p)$, $\pI$ has a winning strategy for the game
$\Game_\mathbb{P}(p,\vec r)$.
\end{definition}

\subsection{Forking projections}In this and the next subsection, we continue the work started in  \cite[\S4]{partI} concerning forking projections.
This will play a key role in Section~\ref{Iteration}, where we deal with iterating $\Sigma$-Prikry posets.

\begin{notation} Given two posets $\mathbb P=(P,\le)$ and $\mathbb A=(A,\unlhd)$,
 and a projection $\pi$ from $\mathbb A$ to $\mathbb P$, we denote by $\mathbb A^\pi$ the poset $(A,\unlhd^\pi)$,
 where $a\unlhd^\pi b$ iff $a\unlhd b$ and $\pi(a)=\pi(b)$.

For a subposet $\z{\mathbb A}=(\z{A},\unlhd)$ of $\mathbb A$, we likewise denote $\z{\mathbb A}^\pi:=(\z{A},\unlhd^\pi)$.
\end{notation}

\begin{definition}[{\cite[Definition~4.1]{partI}}]\label{forking} Suppose that $(\mathbb P,\lh_{\mathbb P},c_{\mathbb P})$ is a $\Sigma$-Prikry triple,
$\mathbb A=(A,\unlhd)$ is a notion of forcing,
and $\lh_{\mathbb A}$ and $c_{\mathbb A}$ are functions with $\dom(\lh_{\mathbb A})=\dom(c_{\mathbb A})=A$.

A pair of functions $({\pitchfork},{\pi})$ is said to be a \emph{forking projection} from $(\mathbb A,\lh_{\mathbb A})$ to $(\mathbb P,\lh_{\mathbb P})$
iff all of the following hold:
\begin{enumerate}
\item\label{frk1} $\pi$ is a projection from $\mathbb A$ onto $\mathbb P$, and $\lh_{\mathbb A}=\lh_{\mathbb P}\circ\pi$;
\item\label{frk0} for all $a\in A$, $\fork{a}$ is an order-preserving function from $(\cone{\pi(a)},\le)$ to $(\conea{a},\unlhd)$;
\item\label{frk3} for all $p\in P$, $\{ a \in A\mid \pi(a)=p\}$ admits a greatest element, which we denote by $\myceil{p}{\mathbb A}$;
\item\label{frk4} for all $n,m<\omega$ and $b\unlhd^{n+m} a$, $m(a,b)$ exists and satisfies:
 $$m(a,b)=\fork{a}(m(\pi(a),\pi(b)));$$
\item\label{frk5} for all $a\in A$ and $q\le\pi(a)$,   $\pi(\fork{a}(q))=q$;
\item\label{frk6} for all $a\in A$ and $q\le\pi(a)$, $a=\myceil{\pi(a)}{\mathbb A}$ iff $\fork{a}(q)=\myceil{q}{\mathbb A}$;
\item\label{frk7} for all $a\in A$, $a'\unlhd^0 a$ and $r\le^0 \pi(a')$, $\fork{a'}(r)\unlhd\fork{a}(r)$.
\setcounter{condition}{\value{enumi}}
\end{enumerate}

The pair $({\pitchfork},{\pi})$ is said to be a forking projection from  $(\mathbb A,\lh_{\mathbb A},c_{\mathbb A})$ to $(\mathbb P,\lh_{\mathbb P},c_{\mathbb P})$
iff, in addition to all of the above, the following holds:
\begin{enumerate}
\setcounter{enumi}{\value{condition}}
\item\label{frk2}  for all $a,a'\in A$, if $c_{\mathbb A}(a)=c_{\mathbb A}(a')$, then $c_{\mathbb P}(\pi(a))=c_{\mathbb P}(\pi(a'))$ and, for all $r\in P_0^{\pi(a)}\cap P_0^{\pi(a')}$, $\fork{a}(r)=\fork{a'}(r)$.
\end{enumerate}
\end{definition}
\begin{remark}
Intuitively speaking, $\fork{a}$ is an operator that, for each condition $p\in P\downarrow \pi(a)$, provides the $\unlhd$-greatest condition $b\unlhd a$ with $\pi(b)=p$.
\end{remark}

\begin{example}\label{trivialfunctor}
Suppose that $(\mathbb P,\lh_{\mathbb P},c_{\mathbb P})$ is a $\Sigma$-Prikry triple.
Let $\mu$ denote the cardinal such that $\one\forces_{\mathbb P}\check\mu=\kappa^+$.
We define the following objects:
\begin{itemize}
\item $\mathbb A=(A,\unlhd)$, where $A:=P\times\mu$ and $(p,\alpha)\unlhd (q,\beta)$ iff $p\le q$ and $\alpha\supseteq\beta$;
\item $\lh_{\mathbb A}:A\rightarrow\omega$ via $\lh_{\mathbb A}(p,\alpha):=\lh_{\mathbb P}(p)$;
\item $c_{\mathbb A}:A\rightarrow\mu\times\mu$ via $c_{\mathbb A}(p,\alpha):=(c_{\mathbb P}(p),\alpha)$;
\item  $\pi:A\rightarrow P$ via $\pi(p,\alpha):=p$;
\item for $a=(p,\alpha)\in A$, define
$\fork{a}:\cone{p}\rightarrow A$ via
$\fork{a}(q):=(q,\alpha)$.
\end{itemize}
Then $(\pitchfork,\pi)$ is a forking projection from $(\mathbb A,\lh_{\mathbb A},c_{\mathbb A})$ to $(\mathbb P,\lh_{\mathbb P},c_{\mathbb P})$,
and $\myceil{a}{\mathbb{A}}=(\pi(a),0)$ for all $a\in A$.
\end{example}

\begin{lemma}\label{frkid} Suppose that $({\pitchfork},{\pi})$ is a forking projection from $(\mathbb A,\lh_{\mathbb A})$ to $(\mathbb P,\lh_{\mathbb P})$.
For every $a\in A$, $\fork{a}(\pi(a))=a$.
\end{lemma}
\begin{proof}  By Definition~\ref{forking}\eqref{frk4}, using $(n,m,b):=(0,0,a)$, we infer that
\[\fork{a}(\pi(a))=\fork{a}(w(\pi(a),\pi(a)))=w(a,a)=a.\qedhere\]
\end{proof}

\begin{lemma}[Canonical form]\label{canonical}
Suppose that $(\mathbb P,\lh_{\mathbb P},c_{\mathbb P})$ and $(\mathbb A,\lh_{\mathbb A},c_{\mathbb A})$ are both $\Sigma$-Prikry notions of forcing.
Denote $\mathbb P=(P,\le)$ and $\mathbb A=(A,\unlhd)$.

If $(\mathbb A,\lh_{\mathbb A},c_{\mathbb A})$ \emph{admits a forking
projection} to $(\mathbb P,\lh_{\mathbb P},c_{\mathbb P})$ as witnessed by a pair $({\pitchfork},{\pi})$,
then we may assume that all of the following hold true:
\begin{enumerate}
\item each element of $A$ is a pair $(x,y)$ with $\pi(x,y)=x$;
\item for all $a\in A$, $\myceil{\pi(a)}{\mathbb A}=(\pi(a),\emptyset)$;
\item for all $p,q\in P$, if $c_{\mathbb P}(p)=c_{\mathbb P}(q)$, then $c_{\mathbb A}(\myceil{p}{\mathbb A})=c_{\mathbb A}(\myceil{q}{\mathbb A})$.
\end{enumerate}
\end{lemma}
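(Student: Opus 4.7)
The plan is to reparameterize $\mathbb A$ by an isomorphic copy $\mathbb A'$ whose conditions are ordered pairs, and then slightly adjust the coloring on the ``ceiling'' conditions so that clause~(3) holds by fiat. Clauses~(1) and~(2) will be arranged by the reparameterization; clause~(3) will be enforced by hand; the only substantive work is to check that this hand-crafted coloring still satisfies the Linked${}_0$ property and the forking-compatibility requirement.

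First, define an injection $\iota:A\to V$ by
\[
\iota(a):=\begin{cases} (\pi(a),\emptyset), & \text{if }a=\myceil{\pi(a)}{\mathbb A},\\ (\pi(a),\{a\}), & \text{otherwise.}\end{cases}
\]
Set $A':=\iota[A]$, and push forward $\unlhd$ and $\lh_{\mathbb A}$ along $\iota$ to obtain $\unlhd'$ and $\lh_{\mathbb A'}$. Define $\pi'(x,y):=x$ and $\pitchfork'_{\iota(a)}(q):=\iota(\pitchfork_{a}(q))$. Since $\iota$ is an order isomorphism from $\mathbb A$ onto $\mathbb A'$ satisfying $\pi'\circ\iota=\pi$, clauses~(\ref{frk1})--(\ref{frk7}) of Definition~\ref{forking} and all $\Sigma$-Prikry axioms of Definition~\ref{SigmaPrikry} other than~(\ref{c1}) transfer verbatim. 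Conclusions~(1) and~(2) of the lemma now hold by construction: every element of $A'$ is a pair $(x,y)$ with $\pi'(x,y)=x$, and for each $p\in P$ the greatest element of $\{a'\in A'\mid \pi'(a')=p\}$ is $\iota(\myceil{p}{\mathbb A}) = (p,\emptyset)$.

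Second, adjust the coloring. Fix a bijection $\phi:\mu\times 2\to\mu$, and define $c_{\mathbb A'}:A'\to\mu$ by
\[
c_{\mathbb A'}(a'):=\begin{cases} \phi(c_{\mathbb P}(p),0), & \text{if } a'=(p,\emptyset),\\ \phi(c_{\mathbb A}(\iota^{-1}(a')),1), & \text{otherwise.}\end{cases}
\]
Conclusion~(3) is then immediate. The main point requiring verification is that this reshuffled coloring still witnesses the Linked${}_0$ property of $\mathbb A'$ (clause~(\ref{c1}) of Definition~\ref{SigmaPrikry}) and the compatibility condition~(\ref{frk2}) for the forking projection. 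Suppose $c_{\mathbb A'}(a_1') = c_{\mathbb A'}(a_2')$. The parity tag of $\phi$ separates ceilings from non-ceilings, so either both $a_i'$ are non-ceilings---in which case $c_{\mathbb A}(\iota^{-1}(a_1'))=c_{\mathbb A}(\iota^{-1}(a_2'))$ and both desired conclusions transfer from the original data through the isomorphism $\iota$---or else both $a_i'$ are ceilings $(p_i,\emptyset)$ with $c_{\mathbb P}(p_1) = c_{\mathbb P}(p_2)$, and in this latter case Linked${}_0$ on $\mathbb P$ yields some $r\in P_0^{p_1}\cap P_0^{p_2}$, whereupon clause~(\ref{frk6}) applied to the preimages under $\iota$ of both $a_i'$ gives
\[
\pitchfork'_{a_1'}(r) = (r,\emptyset) = \pitchfork'_{a_2'}(r),
\]
a common $0$-step extension of $a_1'$ and $a_2'$ witnessing~(\ref{c1}); the remaining requirements of~(\ref{frk2}) then hold trivially.
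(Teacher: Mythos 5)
Your proposal is correct and follows essentially the same route as the paper's own proof: reparametrize $\mathbb A$ via an injection sending lifts to $(\pi(a),\emptyset)$ and non-lifts to a pair with nonempty second coordinate, push forward all the structure, and then tag the coloring with a bit distinguishing lifts from non-lifts so that clause~(3) holds while Linked${}_0$ and Definition~\ref{forking}\eqref{frk2} are recovered via Definition~\ref{forking}\eqref{frk6} on lifts. The one small improvement over the paper is that using $\{a\}$ rather than $a$ as the second coordinate makes $\iota$ injective outright, sparing the paper's preliminary renormalization to $A=|A|$ with $\one_{\mathbb A}=\emptyset$.
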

\begin{proof} By applying a bijection, we may assume that $A=|A|$ with $\one_{\mathbb A}=\emptyset$.
To clarify what we are about to do, we agree to say that ``$a$ is a lift'' iff $a=\myceil{\pi(a)}{\mathbb A}$.
Now, define $f:A\rightarrow P\times A$ via:
$$f(a):=\begin{cases}
(\pi(a),\emptyset),&\text{if }a\text{ is a lift};\\
(\pi(a),a),&\text{otherwise}.
\end{cases}$$

\begin{claim} $f$ is injective.
\end{claim}
\begin{proof} Suppose $a,a'\in A$ with $f(a)=f(a')$.

$\br$ If $a$ is not a lift and $a'$ is not a lift, then from $f(a)=f(a')$ we immediately get
that $a=a'$.

$\br$ If $a$ is a lift and $a'$ is  a lift, then from $f(a)=f(a')$, we infer that $\pi(a)=\pi(a')$, so that $a=\myceil{\pi(a)}{\mathbb A}=\myceil{\pi(a')}{\mathbb A}=a'$.

$\br$ If $a$ is not a lift, but $a'$ is a lift, then from $f(a)=f(a')$, we infer that
$a=\emptyset=\one_{\mathbb A}$, contradicting the fact that
$\one_{\mathbb A}=\myceil{\one_{\mathbb P}}{\mathbb A}=\myceil{\pi(\one_{\mathbb A})}{\mathbb A}$ is a lift.
So this case is void.
\end{proof}
Let $B:=\rng(f)$ and
${\unlhd_B}:=\{ (f(a),f(b))\mid a\unlhd b\}$,
so that $\mathbb B:=(B,\unlhd_B)$ is isomorphic to $\mathbb A$.
Define $\lh_{\mathbb B}:=\lh_{\mathbb A}\circ f^{-1}$ and $\pi_{\mathbb B}:=\pi\circ f^{-1}$.
Also, define $\pitchfork_{\mathbb B}$ via $\fork[\mathbb B]{b}(p):=f(\fork{f^{-1}(b)}(p))$.
It is clear that $b\in B$ is a lift iff $f^{-1}(a)$ is a lift iff $b=(\pi_{\mathbb B}(b),\emptyset)$.

Next, define $c_{\mathbb B}:B\rightarrow\mu\times2$ by letting for all $b\in B$:
$$c_{\mathbb B}(b):=\begin{cases}
(c_{\mathbb P}(\pi_{\mathbb B}(b)),0),&\text{if }b\text{ is a lift};\\
(c_{\mathbb A}(f^{-1}(b)),1),&\text{otherwise}.
\end{cases}$$
\begin{claim} Suppose $b_0,b_1\in B$ with $c_{\mathbb B}(b_0)=c_{\mathbb B}(b_1)$.
Then $c_{\mathbb P}(\pi_{\mathbb B}(b_0))=c_{\mathbb P}(\pi_{\mathbb B}(b_1))$ and,
for all $r\in P_0^{\pi_{\mathbb B}(b_0)}\cap P_0^{\pi_{\mathbb B}(b_1)}$, $\fork[\mathbb B]{b_0}(r)=\fork[\mathbb B]{b_1}(r)$.
\end{claim}
\begin{proof}
We focus on verifying that for all $r\in P_0^{\pi_{\mathbb B}(b_0)}\cap P_0^{\pi_{\mathbb B}(b_1)}$, $\fork[\mathbb B]{b_0}(r)=\fork[\mathbb B]{b_1}(r)$.
For each $i<2$, denote $a_i:=f^{-1}(b_i)$ and $p_i:=\pi_{\mathbb B}(b_i)$,
so that $\pi(a_i)=p_i$. Suppose $r\in P_0^{p_0}\cap P_0^{p_1}$.

$\br$ If $b_0$ is a lift, then so are $b_1,a_0,a_1$.
Therefore, for each $i<2$, Definition~\ref{forking}(\ref{frk6})
implies that $\fork[\mathbb B]{b_i}(r)=f(\fork{a_i}(r))=f(\myceil{r}{\mathbb A})=\myceil{r}{\mathbb B}$.
Consequently, $\fork[\mathbb B]{b_0}(r)=\fork[\mathbb B]{b_1}(r)$, as desired.

$\br$ Otherwise, $c_{\mathbb A}(a_0)=c_{\mathbb A}(a_1)$.
As $r\in P_0^{\pi(a_0)}\cap P_0^{\pi(a_1)}$,
 ${\pitchfork_{\mathbb B}}(b_0)(p)=f(\fork{a_0}(p))=f(\fork{a_1}(p))=\fork[\mathbb B]{b_1}(p)$.
\end{proof}
This completes the proof.
\end{proof}

\begin{setup}
Throughout the rest of this section, suppose that:
\begin{itemize}
\item $\mathbb P=(P,\le)$ is a notion of forcing with a greatest element $\one_{\mathbb P}$;
\item $\mathbb A=(A,\unlhd)$ is a notion of forcing with a greatest element $\one_{\mathbb A}$;
\item $\Sigma=\langle \kappa_n\mid n<\omega\rangle$ is a non-decreasing sequence of regular uncountable cardinals,
converging to some cardinal $\kappa$, and  $\mu$ is a cardinal such that $\one_{\mathbb P}\Vdash_{\mathbb P}\check\mu=\check\kappa^+$;
\item $\lh_{\mathbb P}$ and $c_{\mathbb P}$ are functions witnessing that $(\mathbb P,\lh_{\mathbb P},c_{\mathbb P})$ is $\Sigma$-Prikry;
\item $\lh_{\mathbb A}$ and $c_{\mathbb A}$ are functions with $\dom(\lh_{\mathbb A})=\dom(c_{\mathbb A})=A$;
\item $(\pitchfork,\pi)$ is a forking projection from $(\mathbb A,\lh_{\mathbb A},c_{\mathbb A})$ to $(\mathbb P,\lh_{\mathbb P},c_{\mathbb P})$.
\end{itemize}
\end{setup}

The next two facts will help verifying Clauses \eqref{c4} and  \eqref{c1} of Definition~\ref{SigmaPrikry} for the different stages of the iteration in Section~\ref{Iteration}.

\begin{fact}[{\cite[Lemma~4.3]{partI}}]\label{forkingfacts}
Suppose that $({\pitchfork},{\pi})$ is a forking projection from $(\mathbb A,\lh_{\mathbb A})$ to $(\mathbb P,\lh_{\mathbb P})$,
or, just a pair of maps satisfying Clauses \eqref{frk1}, \eqref{frk0} and \eqref{frk4} of Definition~\ref{forking}.
For each $a\in A$, the following holds:
\begin{enumerate}
\item  $\fork{a}\restriction W(\pi(a))$ forms a bijection from $W(\pi(a))$ to $W(a)$;
\item for all $n<\omega$ and $r\in P_n^{\pi(a)}$,  $\fork{a}(r)\in A_n^a$.
\end{enumerate}

In particular, $(\mathbb A,\lh_{\mathbb A})$ is a graded poset.
\end{fact}

\begin{fact}[{\cite[Lemma~4.7]{partI}}]
Suppose that $({\pitchfork},{\pi})$ is a forking projection from $(\mathbb A,\lh_{\mathbb A},c_{\mathbb A})$ to $(\mathbb P,\lh_{\mathbb P},c_{\mathbb P})$,
or, just a pair of maps satisfying Clauses \eqref{frk1}, \eqref{frk0}, \eqref{frk4}, \eqref{frk7} and \eqref{frk2} of Definition~\ref{forking}.
For all $a,a'\in A$, if $c_{\mathbb A}(a)=c_{\mathbb A}(a')$, then $A_0^a\cap A_0^{a'}$ is non-empty. In particular, if $|\rng(c_{\mathbb A})|\le\mu$, then $(\mathbb{A},\ell_\mathbb{A})$ is $\mu^+$-2-linked$_0$.
\end{fact}

\begin{lemma}\label{propertyDyieldsCPP}
Suppose that $(\mathbb{A},\ell_\mathbb{A})$ has property $\mathcal{D}$. Then it has the $\CPP$.
\end{lemma}
\begin{proof}
Let $U\s A$ be a $0$-open set, $a\in A$ and $n<\omega$;
we shall find $\bar a\unlhd^0 a$ such that either $A^{\bar a}_n\cap U=\emptyset$ or $A^{\bar a}_n\s U$.

Let $\vec{r}=\langle r_\xi\mid \xi<\chi\rangle$ be a good enumeration of $W_n(a)$.
Let $\langle (a_\xi, b_\xi)\mid \xi<\chi \rangle$ list the rounds of the game $\Game_\mathbb{A}(a,\vec r)$
in which, in round $\xi$, $\pI$ plays according to their winning strategy and
$\pII$ plays $b_\xi\unlhd^n a_\xi$ such that:
\begin{itemize}
\item[(i)] $b_\xi\unlhd^0 r_\xi$, and
\item[(ii)] if $A^{r_\xi}_0\cap U\neq \emptyset$, then $b_\xi\in U$.
\end{itemize}

Let $a'\unlhd^0 a$ be a condition witnessing the diagonalizability of $\langle b_\xi\mid \xi<\chi\rangle$.
Set $p:=\pi(a)$ and $p':=\pi(a')$.
By Fact~\ref{forkingfacts}, $W(a)=\fork{a}``W(p)$,
hence, for each $q\leq^n p$, we may let $\xi(q)<\chi$ be such that $\fork{a}(w(p,q))=r_{\xi(q)}$.
Set $\bar{U} := \{q\in P_n^p\mid b_{{\xi(q)}}\in U\}$.
As $\xi(q')= \xi(q)$ whenever $q'\leq^0 q\leq^n p$, the set $\bar{U}$ is $0$-open.
Recalling Setup~\ref{SPS}, $(\mathbb{P},\ell_\mathbb{P},c)$ is $\Sigma$-Prikry,
so applying $\CPP$ to $\bar{U}$,  $p'$, and $n$, we find $\bar p\le^0 p'$ such that either  $P^{\bar p}_n\s\bar{U}$ or $P^{\bar p}_n\cap \bar{U}=\emptyset$.

Set $\bar a:=\fork{a'}(\bar p)$.
Since $\bar p \le^0 p'\le^0 p$,  Clauses \eqref{frk1} and \eqref{frk0} of Definition~\ref{forking}  yield $\bar a \unlhd^0 a'\unlhd^0 a$.

\begin{claim} Let $b\in A^{\bar a}_n$.
Then:
\begin{enumerate}
\item $b\unlhd^0 b_{\xi(\pi(b))}$;
\item If $b\in U$, then $P^{\bar p}_n\s\bar{U}$.
\end{enumerate}
\end{claim}
\begin{proof} Denote $q:=\pi(b)$.

(1) Since $w(a',b)\in W_n(a')$
and $a'$ is a witness to diagonalizability of $\langle b_\xi\mid \xi<\chi\rangle$,
$b\unlhd^0 w(a',b)\unlhd^0 b_{\xi}$,
where $\xi$ is the unique index to satisfy $r_\xi=w(a,b)$.
By Clause~\eqref{frk4} of Definition~\ref{forking}, $$r_\xi= w(a,b)=\fork{a}(w(p,q))=r_{\xi(q)},$$
so that $\xi=\xi(\pi(b))$.

(2) Assuming that $b\in U$, we altogether infer that
$b\in A^{r_{\xi(q)}}_0\cap U$,
and then Clause~(ii) above implies that $b_{\xi(q)}\in U$.
By the definition of $\bar U$, then, $q\in\bar U\cap P^{\bar p}_n$.
So, by the choice of $\bar p$, furthermore $P^{\bar p}_n\s\bar{U}$.
\end{proof}

It thus follows that if $A^{\bar a}_n\cap U\neq\emptyset$,
then for every $b\in A^{\bar a}_n$, $\pi(b)\in P_n^{\bar p}\s\bar U$, so that
$b_{\xi(\pi(b))}\in U$. By the preceding claim, $b\unlhd^0 b_{\xi(\pi(b))}$,
so, since $U$ is $0$-open, $b\in U$.
Thus we have shown that if $A^{\bar a}_n\cap U\neq\emptyset$, then $A^{\bar a}_n\s U$.
\end{proof}

\begin{prop}\label{TheWitnessOfMixingYieldsDiagona}
Let $a\in A$, $n<\omega$ and $\vec{s}=\langle s_\xi\mid \xi<\chi\rangle$ be a good enumeration of $W_n(a)$.  
Let $p'\le^0 \pi(a)$.

Suppose that $\langle b_\xi\mid \xi<\chi\rangle$ is a sequence of conditions in $\conea{a}$ such that:
\begin{enumerate}
\item[$(\alpha)$] $\langle \pi(b_\xi)\mid \xi<\chi\rangle$ is diagonalizable with respect to $\langle \pi(s_\xi)\mid \xi<\chi\rangle$, as witnessed by $p'$;\footnote{By Fact~\ref{forkingfacts},  $\langle \pi(s_\xi)\mid \xi<\chi\rangle$ is a good enumeration of $W_n(\pi(a))$.}
\item[$(\beta)$]  $b$ is a condition in $\mathbb A$ with $\pi(b)=p'$
such that, for all  $q'\in W_n(p')$, $$\fork{b}(q')\unlhd^0 b_{\xi},$$
where $\xi$ is the unique index such that $\pi(s_\xi)=w(\pi(a), q')$.
\end{enumerate}
Then $b$ witnesses that $\langle b_\xi\mid \xi<\chi\rangle$ is diagonalizable with respect to $\vec{s}$.
\end{prop}
\begin{proof} 
We go over the two clauses of Definition~\ref{Diagonalizability}:
\begin{itemize}
\item[(a)]

Let $\xi<\chi$. By Clause~$(\alpha)$ above,  $\pi(b_\xi)\le^0 \pi(s_\xi)$. Together with Definition~\ref{SigmaPrikry}\eqref{itsaprojection}, it follows that $$w(\pi(a), \pi(b_\xi))\le^0 w(\pi(a), \pi(s_\xi))=\pi(s_\xi).$$
Finally, Clauses~\eqref{frk1}, \eqref{frk4} and \eqref{frk5} of Definition~\ref{forking} yield
$$b_\xi\unlhd^0 w(a,b_\xi)=\fork{a}(w(\pi(a), \pi(b_\xi)))\unlhd^0 \fork{a}(\pi(s_\xi))=s_\xi.$$

\item[(b)]

Let $b'\in W_n(b)$, and we shall show that
$b'\unlhd^0 b_\xi$, where $\xi$ is the unique index to satisfy $s_\xi=w(a, b')$.
Set $q':=\pi(b')$. As $\pi(b)=p'$,
we infer from Definition~\ref{forking}\eqref{frk4} that $b'=\fork{b}(q')$ and $q'\in W_n(p')$. 
Thus, by Clause~$(\beta)$ above $b'=\fork{b}(q')\unlhd^0 b_\xi$, where $\xi$ is the unique index such that $\pi(s_\xi)=w(\pi(a),q')$.  
Again by Definition~\ref{forking}\eqref{frk4},  $$s_\xi=\fork{a}(\pi(s_\xi))=\fork{a}(w(\pi(a),q'))=w(a,b'),$$
as desired.\qedhere
\end{itemize}
\end{proof}

\subsection{Types and the Weak Mixing Property} In this subsection, we will provide a sufficient condition for $(\mathbb A,\ell_\mathbb{A})$ to inherit property $\mathcal D$
from $(\mathbb P,\ell_\mathbb{P})$.

While reading the next two definitions, the reader may want to have a simple example in mind;
such an example is given by Lemma~\ref{trivialfunctortp} below.

 \begin{definition}[Types]\label{type}
A \emph{type over $(\pitchfork,\pi)$}
is a map $\tp\colon A\rightarrow{}^{<\mu}\omega$ having the following properties:
\begin{enumerate}
\item\label{type1} for each $a\in A$, either $\dom(\tp(a))=\alpha+1$ for some $\alpha<\mu$, in which case we define $\mtp(a):=\tp(a)(\alpha)$,
or $\tp(a)$ is empty, in which case we define $\mtp(a):=0$;
\item\label{type2} for all $a,b\in A$ with $b\unlhd a$, $\dom(\tp(a))\leq \dom(\tp(b))$ and for each $i\in\dom(\tp(a))$, $\tp(b)(i)\leq \tp(a)(i)$;
\item\label{type3} for all $a\in A$ and $q\le \pi(a)$, $\dom(\tp(\fork{a}(q)))=\dom(\tp(a))$;
\item\label{type6} for all $a\in A$,  $\tp(a)=\emptyset$ iff $a=\myceil{\pi(a)}{\mathbb{A}}$; 
\item\label{type4} for all $a\in A$ and  $\alpha\in \mu\setminus \dom(\tp(a))$, there exists a \emph{stretch of $a$ to $\alpha$}, denoted $a{}^{\curvearrowright\alpha}$,
and satisfying the following:
\begin{enumerate}[label=(\alph*)]
\item $a{}^{\curvearrowright\alpha}\unlhd^\pi a$;
\item  $\dom(\tp(a{}^{\curvearrowright\alpha}))=\alpha+1$;
\item  $\tp(a{}^{\curvearrowright\alpha})(i)\leq \mtp(a)$ whenever $\dom(\tp(a))\le i\le\alpha$;
\end{enumerate}
\item \label{newstretch} for all $a,b\in A$ with $\dom(\tp(a))=\dom(\tp(b))$, for every $\alpha\in \mu\setminus \dom(\tp(a))$,
if $b\unlhd a$, then
$b{}^{\curvearrowright\alpha}\unlhd a{}^{\curvearrowright\alpha}$;
\item \label{type5} For each $n<\omega$, the poset $\z{\mathbb{A}}_n$ is dense in $\mathbb{A}_n$, where $\z{\mathbb{A}}_n:=(\z{A}_n,\unlhd)$ and $\z{A}_n:=\{a\in A_n\mid \pi(a)\in\z{P}_n\ \&\ \mtp(a)=0\}$.
\end{enumerate}
 \end{definition}
 \begin{remark}\label{RemarkType} Note that Clauses \eqref{type2} and \eqref{type3} imply that for all $m,n<\omega$, $a\in\z A_m$
 and $q \le\pi(a)$, if $q\in\z P_n$ then $\fork{a}(q)\in\z A_n$.
 \end{remark}

The next definition is a weakening of \cite[Definition~4.11]{partI}.

 \begin{definition}[Weak Mixing Property]\label{mixingproperty}
The forking projection $(\pitchfork,\pi)$ is said to have the \emph{weak mixing property} iff
it admits a type $\tp$ satisfying that 
for all  $n<\omega$,
$a\in A$, $\vec r$, and $p'\le^0 \pi(a)$,
and for every function $g:W_n(\pi(a))\rightarrow \mathbb{A}\downarrow a$, if there exists an ordinal $\iota$ such that all of the following hold:\footnote{The ordinal $\iota$ would help us keep track of the support when appealing to the weak mixing property in an iteration (see, e.g., Lemma \ref{MixingforLimits} and Claim~\ref{ClaimPrepropertyD}).}
\begin{enumerate}
\item\label{Mixing1} $\vec r=\langle r_\xi \mid \xi<\chi\rangle$ is a good enumeration of $W_n(\pi(a))$;
\item\label{Mixing2}  $\langle \pi(g(r_\xi))\mid \xi<\chi\rangle$ is diagonalizable with respect to $\vec r$, as witnessed by $p'$;\footnote{In particular, $\pi(g(r_\xi))\le^0 r_\xi$ and $\lh_{\mathbb A}(g(r_\xi))=\lh_{\mathbb A}(a)+n$ for every $\xi<\chi$.}
\item\label{Mixing3} for every $\xi<\chi$:
\begin{itemize}
\item if $\xi<\iota$, then $\dom(\tp(g(r_\xi)))=0$;
\item if $\xi=\iota$, then $\dom(\tp(g(r_\xi)))\geq 1$;
\item if $\xi>\iota$, then $\dom(\tp(g(r_\xi)))>(\sup_{\eta<\xi}\dom(\tp(g(r_\eta))))+1$;
\end{itemize}
\item\label{Mixing4} for all $\xi\in(\iota,\chi)$ and $i\in[\dom(\tp(a)),\sup_{\eta<\xi}\dom(\tp(g(r_\eta)))]$,
$$\tp(g(r_\xi))(i)\leq\mtp(a),$$

\item\label{Mixing5} $\sup_{\xi<\chi} \mtp(g(r_\xi))<\omega$,
\end{enumerate}
then there exists $b\unlhd^0 a$ with $\pi(b)=p'$ such that, for all  $q'\in W_n(p')$, $$\fork{b}(q')\unlhd^0 g(w(\pi(a),q')).$$
\end{definition}

\begin{lemma}\label{trivialfunctortp} The forking projection $(\pitchfork,\pi)$ from Example~\ref{trivialfunctor}
has the weak mixing property.
\end{lemma}
\begin{proof} We attach a type $\tp:A\rightarrow{}^{<\mu}\omega$ as follows.
For every $a=(p,\alpha)\in A$, with $\alpha>0$, let $\tp(a)$ be the constant $(\alpha+1)$-sequence whose sole value is $0$. Otherwise, let $\tp(a):=\emptyset$.
We shall verify that $\tp$ witnesses that $(\pitchfork,\pi)$ has the weak mixing property.
To this end, suppose that we are given $n<\omega$, $a\in A$, $\vec r=\langle r_\xi \mid \xi<\chi\rangle$, $p'\le^0 \pi(a)$,
a function $g:W_n(\pi(a))\rightarrow {\mathbb{A}}\downarrow a$ and an ordinal $\iota$ satisfying Clauses (1)--(4) of Definition~\ref{mixingproperty}.
For each $\xi<\chi$, write $(q_\xi,\alpha_\xi):=g(r_\xi)$. Note that by Clause~\eqref{type6} of  Definition~\ref{type} and Example~\ref{trivialfunctor}, $\alpha_\xi=0$ for all $\xi<\iota$.

Set $b:=(p',\alpha')$, for $\alpha':=\sup_{\iota\leq \xi<\chi}\alpha_\xi$. Clearly, $b\unlhd^0 a$. Note that, by regularity of $\mu$, $\alpha'<\mu$.
Now, since $p'$ witnesses that $\langle q_\xi\mid \xi<\chi\rangle$ is diagonalizable,
for every $q'\in W_n(p')$, if we let $\xi$ denote the unique index to satisfy $r_\xi=w(\pi(a), q')$,
then $q'\leq^0 q_\xi$.
As $\alpha'\ge\alpha_\xi$, it altogether follows that
$(q',\alpha')=\fork{b}(q')\unlhd^0 g(w(\pi(a),q'))=(q_\xi,\alpha_\xi)$.
\end{proof}

\begin{lemma}\label{MixingLiftsPropertyD}
Suppose that $(\pitchfork,\pi)$  has the weak mixing property and that $(\mathbb{P},\ell_\mathbb{P})$ has property $\mathcal{D}$. Then $(\mathbb{A},\ell_\mathbb{A})$ has  property $\mathcal{D}$, as well.
\end{lemma}
\begin{proof}
Let $a\in A$ and $n<\omega$. Let $\vec{s}=\langle s_\xi\mid \xi<\chi\rangle$ be a good enumeration of $W_n(a)$. 
By Lemma~\ref{propertyDatdense} and Definition~\ref{type}\eqref{type5}, it suffices to show that $\pI$ has a winning strategy  in $\Game_\mathbb{A}(a,\vec{s},D)$,
where $D:=\z{A}_{\ell_\mathbb{A}(a)+n}$. For each $\xi<\chi$, let $r_\xi:=\pi(s_\xi)$.
By Fact~\ref{forkingfacts}, 
$s_\xi=\fork{a}(r_\xi)$,
and  $\vec{r}:=\langle r_\xi \mid \xi<\chi\rangle$ forms a good enumeration  of $W_n(\pi(a))$.

Fix any type $\tp$ witnessing the weak mixing property of $(\pitchfork,\pi)$. 
We shall describe a winning strategy for $\pI$ in the game $\Game_\mathbb{A}(a,\vec{s},D)$
by producing sequences of the form $\langle (p_\eta,a_\eta,b_\eta,q_\eta)\mid \eta<\xi\rangle$,
where $\langle (a_\eta,b_\eta)\mid \eta<\xi\rangle$ is an initial play (consisting of $\xi$ rounds)  in the game
$\Game_\mathbb{A}(a,\vec{s},D)$,
and $\langle (p_\eta,q_\eta)\mid \eta<\xi\rangle$ is an initial play in the game $\Game_\mathbb{P}(\pi(a),\vec{r})$. Roughly speaking, the idea is to design the moves of $\pI$ (i.e., the $a_\eta$'s) so that they force $\pII$ to play conditions $b_\eta$ in such a way that the map $s_\eta\stackrel{g}{\mapsto} b_\eta$ satisfies the requirements of Definition~\ref{mixingproperty}; most notably, Clauses~\eqref{Mixing3} and \eqref{Mixing4}. To comply with this we shall do suitable stretches when defining the conditions $a_\eta$'s.

$\br$ For $\xi=0$,
we first play a condition $p_0$ according to the winning strategy for $\pI$ in the game $\Game_\mathbb{P}(\pi(a),\vec{r})$.
In particular, $p_0\le^0\pi(a)$. 
As $p_0$ is compatible with $r_0$,
fix a condition $r'\le p_0,r_0$, and note that it follows from Definition~\ref{forking}\eqref{frk0} that $\fork{a}(r')\unlhd \fork{a}(p_0),\fork{a}(r_0)$.  Now set $\bar{\alpha}_0:=\dom(\tp(a))+1$ and  $a_0:=\fork{a}(p_0){}^{\curvearrowright \bar{\alpha}_0}$. By Definition~\ref{type}\eqref{type4},  $a_0\unlhd^0 a$, $\pi(a_0)=p_0$, and it can also be shown that $a_0$ is compatible with $s_0$. Indeed, by Clauses \eqref{type4} and \eqref{newstretch} of Definition~\ref{type},
we infer that:
\begin{itemize}
\item $a_0 \unlhd^\pi \fork{a}(p_0)$;
\item $\dom(\tp(a_0))=\bar\alpha_0+1$;
\item $\tp(a_0)(i)\le\mtp(\fork{a}(p_0))$, whenever $\dom(\tp(\fork{a}(p_0)))\le i\le\bar\alpha_0$;
\item $\fork{a}(r'){}^{\curvearrowright\bar\alpha_0}\mathrel{\unlhd}\fork{a}(p_0){}^{\curvearrowright\bar\alpha_0}=a_0$
and $\fork{a}(r'){}^{\curvearrowright\bar\alpha_0}\mathrel{\unlhd}\fork{a}(r')\allowbreak\mathrel{\unlhd}s_0$.
\end{itemize}
Thus, $a_0$ is compatible with $s_0$. Next, let $\pII$ play $b_0\in D$ at will, subject to ensuring that $b_0\unlhd a_0$ and $b_0\unlhd^0 s_0$. Finally, let $q_0:=\pi(b_0)$.

$\br$ Suppose that $\xi<\chi$ is nonzero and that $\langle (p_\eta,a_\eta,b_\eta,q_\eta)\mid \eta<\xi\rangle$ has already been defined.
Let $p_\xi$ be given by the winning strategy for $\pI$ in the game $\Game_\mathbb{P}(\pi(a),\vec{r})$
with respect to the initial play $\langle (p_\eta,q_\eta)\mid \eta<\xi\rangle$.
As in the previous case, we may fix a condition $r'$ such that
$\fork{a}(r')\unlhd\fork{a}(p_\xi),s_\xi$.

Set $\bar{\alpha}_\xi:=(\sup_{\eta<\xi}\dom(\tp(b_\eta)))+1$. Then, by Clauses \eqref{type4} and \eqref{newstretch} of Definition~\ref{type},
we may let $a_\xi:=\fork{a}(p_\xi){}^{\curvearrowright\bar\alpha_\xi}$, and  argue as before that $a_\xi$ is compatible with $s_\xi$. Also, note that $a_\xi\unlhd^0 a$ and $\pi(a_\xi)=p_\xi$. Next, let $\pII$ play any $b_\xi\in D$ such that $b_\xi\unlhd a_\xi$ and $b_\xi\unlhd^0 s_\xi$. Finally, let $q_\xi:=\pi(b_\xi)$. 

At the end of the game,
we have produced a sequence $\langle (p_\xi,a_\xi,b_\xi,q_\xi)\mid \xi<\chi\rangle$.
Since $\langle (p_\xi,q_\xi)\mid \xi<\chi\rangle$ is the outcome of a $\Game_{\mathbb P}(\pi(a),\vec r)$-game
in which $\pI$ played according to a winning strategy, we may fix $p'\le^0\pi(a)$ witnessing that $\langle q_\xi\mid \xi<\chi\rangle$ is diagonalizable.

It follows that if we define a function $g:W_n(\pi(a))\rightarrow D$ via $g(r_\xi):=b_\xi$, then all the requirements of Definition~\ref{mixingproperty} are fulfilled with respect to $\iota:=0$
(Note that we have secured that $\dom(\tp(a_\xi))>0$ for all $\xi<\chi$).
For instance, to see that Clause~\eqref{Mixing4} of Definition~\ref{mixingproperty}  holds,
notice that by Clauses \eqref{type2} and \eqref{type4} of Definition~\ref{type}, for all $\xi<\chi$ and $i\in[\dom(\tp(a)),\dom(\tp(a_\xi)))$,
$$\tp(b_\xi)(i)\le\tp(a_\xi)(i)\le\mtp(\fork{a}(p_\xi))\le \mtp(a).$$

Consequently, we may pick $b\unlhd^0 a$ with $\pi(b)=p'$ such that for all
 $q'\in W_n(p')$, $$\fork{b}(q')\unlhd^0 g(w(\pi(a),q')).$$
By definition, for each $q'\in W_n(p')$, $g(w(\pi(a),q'))=b_\xi$, where $\xi$ is the unique index such that $\pi(s_\xi)=w(\pi(a),q')$. Therefore, invoking Proposition~\ref{TheWitnessOfMixingYieldsDiagona} we infer that $b$ diagonalizes $\langle b_\xi\mid \xi<\chi\rangle$, as desired.
\end{proof}

\begin{cor}\label{PropertyDplusMixingYieldsCPP}

If $(\mathbb{P},\ell_\mathbb{P})$ has property $\mathcal{D}$,
and $({\pitchfork},{\pi})$ has the weak mixing property,
then $(\mathbb{A},\ell_\mathbb{A})$ has the $\CPP$.
\end{cor}
\begin{proof}
By Lemmas \ref{propertyDyieldsCPP} and \ref{MixingLiftsPropertyD}.
\end{proof}

\begin{lemma}\label{forkinganddirectedclosure}
Suppose that $({\pitchfork},{\pi})$ is as in Setup~\ref{SPS} or,  
just a pair of maps satisfying Clauses \eqref{frk1}, \eqref{frk0}, \eqref{frk5} and \eqref{frk7} of Definition~\ref{forking}.

Let $n<\omega$.
Assuming that $(\pitchfork,\pi)$ admits a type, and $\z{\mathbb{A}}_n$ is defined according to the last clause of Definition~\ref{type},
if $\z{\mathbb A}_n^\pi$ is $\kappa_n$-directed-closed, then so is $\z{\mathbb{A}}_n$.
\end{lemma}
\begin{proof} The proof is very similar to that of \cite[Lemma~4.6]{partI},
bearing Remark~\ref{RemarkType} in mind.
\end{proof}

\section{Iteration Scheme}\label{Iteration}

In this section, we present our iteration scheme for $\Sigma$-Prikry posets.
Throughout the section, assume that $\Sigma=\langle \kappa_n\mid n<\omega\rangle$ is a non-decreasing sequence of regular uncountable cardinals.
Denote $\kappa:=\sup_{n<\omega}\kappa_n$.
Also, assume that $\mu$ is some cardinal satisfying $\mu^{<\mu}=\mu$, so that $|H_\mu|=\mu$.

The following convention will be applied hereafter:
\begin{conv}\label{convention31}  For all ordinals $\alpha\le\delta\le\mu^+$:
\begin{enumerate}
\item  $\emptyset_\delta:=\delta\times\{\emptyset\}$ denotes the $\delta$-sequence with constant value $\emptyset$;
\item  For an $\alpha$-sequence $p$ and a $\delta$-sequence $q$, $p*q$ denotes the unique $\delta$-sequence satisfying that for all $\beta<\delta$:
$$(p*q)(\beta)=\begin{cases}
q(\beta),&\text{if }\alpha\le\beta<\delta;\\
p(\beta),&\text{otherwise}.
\end{cases}$$
\item Let $\mathbb{P}_\delta:=(P_\delta,\le_\delta)$ and $\mathbb{P}_\alpha:=(P_\alpha,\le_\alpha)$ be forcing posets such that $P_\delta\s{}^\delta{H_{\mu^+}}$ and $P_\alpha\s{}^\alpha{H_{\mu^+}}$. Also, assume $p\mapsto p\restriction\alpha$ defines a projection between $\mathbb{P}_\delta$ and $\mathbb{P}_\alpha$. We denote by $i^\delta_\alpha: V^{\mathbb{P}_\alpha}\rightarrow V^{\mathbb{P}_\delta}$ the map defined by recursion over the rank of each $\mathbb{P}_\alpha$-name $\sigma$ as follows:
$$i^\delta_\alpha(\sigma):=\{(i^\delta_\alpha(\tau),p*\emptyset_\delta)\mid (\tau,p)\in \sigma\}.$$
\end{enumerate}
\end{conv}

Our iteration scheme requires three building blocks:

\blk{I}
We are given a $\Sigma$-Prikry triple $(\mathbb{Q},\lh,c)$
such that $\mathbb Q=(Q,\le_Q)$ is a subset of $H_{\mu^+}$, $\one_{\mathbb Q}\forces_{\mathbb Q}\check\mu=\kappa^+$ and $\one_{\mathbb Q}\forces_{\mathbb{Q}}``\kappa\text{ is singular}"$.\footnote{At the behest of the referee, we stress that the last hypothesis plays a rather isolated role; see Footnote~\ref{sole} on page~\pageref{sole}.} {Additionally, we  assume that $(\mathbb{Q},\ell)$ has property $\mathcal{D}$.}
To streamline the matter, we also require that $\one_{\mathbb Q}$ be equal to $\emptyset$.

\blk{II} For every $\Sigma$-Prikry triple $(\mathbb P,\lh_{\mathbb P},c_{\mathbb P})$ {having property $\mathcal{D}$}
such that $\mathbb P=\left(P,\le\right)$ is a subset of $H_{\mu^+}$,
$\one_{\mathbb P}\forces_{\mathbb P}\check\mu=\kappa^+$
and $\one_{\mathbb P}\forces_{\mathbb{P}}``\kappa\text{ is singular}"$,
every $r^\star\in P$, and every $\mathbb P$-name $z\in H_{\mu^+}$,
we are given a corresponding $\Sigma$-Prikry triple $(\mathbb A,\lh_{\mathbb A},c_{\mathbb A})$ such that:
\begin{enumerate}[label=(\alph*)]
\item\label{C1blk2} $(\mathbb A,\lh_{\mathbb A},c_{\mathbb A})$ admits a forking projection $({\pitchfork},{\pi})$ to $(\mathbb P,\lh_{\mathbb P},c_{\mathbb P})$ that has the weak mixing property;\footnote{So, by Lemma~\ref{MixingLiftsPropertyD},  $(\mathbb A,\lh_{\mathbb A})$  has property $\mathcal{D}$, as well.} 
\item \label{Cnewblk2} for each $n<\omega$, $\z{\mathbb{A}}^\pi_n$  is $\kappa_n$-directed-closed;\footnote{\label{FootnoteBBII}$\z{A}_n$ denotes the poset of Definition~\ref{type}\eqref{type5} regarded with respect to the type witnessing Clause~\ref{C1blk2} of \blkref{II}.}
\item\label{C2blk2} $\one_{\mathbb A}\forces_{\mathbb A}\check\mu=\kappa^+$;
\item\label{C3blk2} $\mathbb A=(A,\unlhd)$ is a subset of $H_{\mu^+}$.
\setcounter{condition}{\value{enumi}}
\end{enumerate}
By Lemma~\ref{canonical}, we may streamline the matter, and also require that:
\begin{enumerate}[label=(\alph*)]
\setcounter{enumi}{\value{condition}}
\item\label{C5blk2} each element of $A$ is a pair $(x,y)$ with $\pi(x,y)=x$;
\item\label{C6blk2} for every $a\in A$, $\myceil{\pi(a)}{\mathbb A}=(\pi(a),\emptyset)$;
\item\label{C7blk2} for every $p,q\in P$, if $c_{\mathbb P}(p)=c_{\mathbb P}(q)$, then $c_{\mathbb A}(\myceil{p}{\mathbb A})=c_{\mathbb A}(\myceil{q}{\mathbb A})$.
\end{enumerate}

\blk{III}
We are given a function $\psi:\mu^+\rightarrow H_{\mu^+}$.

\begin{goal}\label{goals}
Our goal is to define a system $\langle  (\mathbb{P}_\delta,\lh_\delta,c_\delta,\langle\pitchfork_{\delta,\gamma}\mid \gamma\le\delta\rangle)\mid \delta\le\mu^+\rangle$ in such a way that
for all $\gamma\le\delta\le\mu^+$:
\begin{enumerate}
\item[(i)] $\mathbb P_\delta$ is a poset $(P_\delta,\le_\delta)$, $P_\delta\s{}^{\delta}H_{\mu^+}$,
and, for all $p\in P_\delta$,  $|B_p|<\mu$, where $B_p:=\{ \beta+1\mid \beta\in\dom(p)\ \&\ p(\beta)\neq\emptyset\}$;
\item[(ii)] The map $\pi_{\delta,\gamma}:P_\delta\rightarrow P_\gamma$ defined by $\pi_{\delta,\gamma}(p):=p\restriction\gamma$ forms a projection from $\mathbb P_\delta$ to $\mathbb P_\gamma$ and $\lh_\delta= \lh_\gamma\circ \pi_{\delta,\gamma}$; 
\item[(iii)] $\mathbb P_0$ is a trivial forcing, $\mathbb P_1$ is isomorphic to $\mathbb Q$ given by \blkref{I},
and $\mathbb P_{\delta+1}$ is isomorphic to $\mathbb A$ given by \blkref{II} when invoked with $(\mathbb P_\delta,\lh_\delta,c_\delta)$ and a pair $(r^\star,z)$ which is decoded from $\psi(\delta$);
\item[(iv)] If $\delta>0$, then $(\mathbb P_\delta,\lh_\delta,c_\delta)$ is a $\Sigma$-Prikry triple {having property $\mathcal{D}$} whose greatest element is $\emptyset_\delta$, $\lh_\delta=\lh_1\circ \pi_{\delta,1}$, and $\emptyset_\delta\forces_{\mathbb P_\delta}\check\mu=\kappa^+$;
\item[(v)] If $0<\gamma< \delta\le\mu^+$, then $({\pitchfork_{\delta,\gamma}},{\pi_{\delta,\gamma}})$ is a forking projection from $(\mathbb P_\delta,\lh_\delta)$ to $(\mathbb P_\gamma,\lh_\gamma)$;
in case $\delta<\mu^+$, $({\pitchfork_{\delta,\gamma}},{\pi_{\delta,\gamma}})$  is furthermore a forking projection from $(\mathbb P_\delta,\lh_\delta,c_\delta)$ to $(\mathbb P_\gamma,\lh_\gamma,c_\gamma)$, and
in case $\delta=\gamma+1$, $(\pitchfork_{\delta,\gamma},\pi_{\delta,\gamma})$ has the weak mixing property;
\item[(vi)] If $0<\alpha\le\beta\le\delta$, then, for all $p\in P_{\delta}$ and $r\le_\alpha p\restriction\alpha$, $\fork[\beta,\alpha]{p\restriction\beta}(r)=(\fork[\delta,\alpha]{p}(r))\restriction \beta$.
\end{enumerate}
\end{goal}

\begin{remark}\label{remark43} Note the asymmetry between the cases $\delta<\mu^+$ and  $\delta=\mu^+$:
\begin{enumerate}
\item By Clause~(i), we will have that $\mathbb P_\delta\s H_{\mu^+}$ for all $\delta<\mu^+$, but $\mathbb P_{\mu^+}\nsubseteq H_{\mu^+}$.
Still, $\mathbb P_{\mu^+}$ will nevertheless be isomorphic to a subset of $H_{\mu^+}$, as
we may identify $P_{\mu^+}$ with $\{ p\restriction(\sup(B_p)+1)\mid p\in P_{\mu^+}\}$.
\item Clause~(v) puts a weaker assertion for $\delta=\mu^+$. In order to avoid trivialities, let us assume that $\mu^+$-many stages in our iteration $\mathbb{P}_{\mu^+}$ are non-trivial.
To see  the restriction in Clause~(v)  is necessary
note that, by the pigeonhole principle, there must exist two conditions $p,q\in  P_{\mu^+}$
and an ordinal $\gamma<\mu^+$ for which
$c_{\mu^+}(p)=c_{\mu^+}(q)$,  $B_p\s\gamma$, but $B_q\nsubseteq\gamma$.
Now, towards a contradiction, assume there is a map $\pitchfork$ such that
$({\pitchfork},{\pi_{{\mu^+},\gamma}})$ forms a forking projection from $(\mathbb P_{\mu^+},\lh_{\mu^+},c_{\mu^+})$ to $(\mathbb P_\gamma,\lh_\gamma,c_\gamma)$.
By Definition~\ref{forking}\eqref{frk2}, then, $c_{\gamma}(p\restriction\gamma)=c_{\gamma}(q\restriction\gamma)$,
so that by Definition~\ref{SigmaPrikry}\eqref{c1}, we should be able to pick $r\in (P_{\gamma})^{p\restriction\gamma}_0\cap(P_{\gamma})^{q\restriction\gamma}_0$,
and then by Definition~\ref{forking}\eqref{frk2}, $\fork{p}(r)=\fork{q}(r)$.
Finally, as $B_p\s\gamma$, $p=\myceil{p\restriction\gamma}{\mathbb P_{\mu^+}}$,\footnote{This is a consequence of the fact that $p=(p\restriction\gamma)*\emptyset_{\mu^+}=\myceil{p\restriction\gamma}{\mathbb{P}_{\mu^+}}$. See the discussion at the beginning of Lemma~\ref{CvIteration}.}
so that, by Definition~\ref{forking}\eqref{frk6}, $\fork{p}(r)=\myceil{r}{\mathbb P_{\mu^+}}$.
But then $\fork{q}(r)=\myceil{r}{\mathbb P_{\mu^+}}$,
so that, by Definition~\ref{forking}\eqref{frk6}, $q=\myceil{q\restriction\gamma}{\mathbb P_{\mu^+}}$,
contradicting the fact that $B_q\nsubseteq\gamma$.
\end{enumerate}
\end{remark}

\subsection{Defining the iteration}\label{DefiningTheIteration}
For every $\delta<\mu^+$, fix an injection $\phi_\delta:\delta\rightarrow\mu$.
As $|H_\mu|=\mu$, by the Engelking-Kar\l owicz theorem, we may also fix a sequence $\langle e^i\mid i<\mu\rangle$ of functions from $\mu^+$ to $H_\mu$ such that
for every function $e:C\rightarrow H_\mu$ with $C\in[\mu^+]^{<\mu}$, there is $i<\mu$ such that $e\s e^i$.

The upcoming definition is by recursion on $\delta\le\mu^+$, and we continue as long as we are successful. We shall later verify that the described process is indeed successful.

$\br$ Let $\mathbb P_0:=(\{\emptyset\},\le_0)$ be the trivial forcing.
Let $\lh_0$ and $c_0$ be the constant function $\{(\emptyset,\emptyset)\}$,
and let $\pitchfork_{0,0}$ be the constant function $\{ (\emptyset,\{(\emptyset,\emptyset)\})\}$,
so that $\fork[0,0]{\emptyset}$ is the identity map.

$\br$ Let $\mathbb P_1:=(P_1,\le_1)$, where $P_1:={}^1Q$ and  $p\le_1 p'$ iff $p(0)\le_Q p'(0)$.
Define $\lh_{1}$ and $c_1$ by stipulating $\lh_1(p):=\lh(p(0))$ and $c_1(p):=c(p(0))$.
For all $p\in P_1$, let $\fork[1,0]{p}:\{\emptyset\}\rightarrow\{p\}$ be the constant function,
and let $\fork[1,1]{p}$ be the identity map.

$\br$ Suppose $\delta<\mu^+$ and that $\langle (\mathbb P_\beta,\lh_\beta,c_\beta,\langle \pitchfork_{\beta,\gamma}\mid\gamma\le\beta\rangle)\mid\beta\le\delta\rangle$ has already been defined.
We now define the triple $(\mathbb{P}_{\delta+1},\lh_{\delta+1}, c_{\delta+1})$ and the sequence of maps $\langle \pitchfork_{\delta+1,\gamma}\mid\gamma\leq \delta+1\rangle$, as follows.

$\br\br$ If $\psi(\delta)$ happens to be a triple $(\beta,r,\sigma)$, where $\beta<\delta$, $r\in P_\beta$ and $\sigma$ is a $\mathbb P_\beta$-name,
then we appeal to \blkref{II} with $(\mathbb P_\delta,\lh_\delta,c_\delta)$,
$r^\star:=r*\emptyset_\delta$ and $z:=i^\delta_\beta(\sigma)$
to get a corresponding $\Sigma$-Prikry poset $(\mathbb A,\lh_{\mathbb A},c_{\mathbb A})$.

$\br\br$ Otherwise, we obtain $(\mathbb A,\lh_{\mathbb A},c_{\mathbb A})$ by appealing to \blkref{II} with $(\mathbb P_\delta,\lh_\delta,c_\delta)$, $r^\star:=\emptyset_\delta$ and $z:=\emptyset$.

In both cases, we also obtain a forking projection $({\pitchfork},{\pi})$ from $(\mathbb A,\lh_{\mathbb A},c_{\mathbb A})$ to $(\mathbb P_\delta,\lh_\delta,c_\delta)$.
Furthermore, each condition in $\mathbb A=(A,\unlhd)$ is a pair $(x,y)$ with $\pi(x,y)=x$, and, for every $p\in P_\delta$, $\myceil{p}{\mathbb A}=(p,\emptyset)$.
Now, define $\mathbb P_{\delta+1}:=(P_{\delta+1},\le_{\delta+1})$ by letting $P_{\delta+1}:=\{ x {}^\smallfrown\langle y\rangle \mid (x,y)\in A\}$,
and then let $p\le_{\delta+1} p'$ iff $(p\restriction\delta,p(\delta))\unlhd (p'\restriction\delta,p'(\delta))$.
Put $\lh_{\delta+1}:=\lh_1\circ \pi_{\delta+1,1}$ and define $c_{\delta+1}:P_{\delta+1}\rightarrow H_\mu$ via $c_{\delta+1}(p):=c_{\mathbb A}(p\restriction\delta,p(\delta))$.

Next, let $p\in P_{\delta+1}$, $\gamma\le\delta+1$ and $r\le_\gamma p\restriction\gamma$ be arbitrary; we need to define $\fork[\delta+1,\gamma]{p}(r)$.
For $\gamma=\delta+1$, let $\fork[\delta+1,\gamma]{p}(r):=r$, and for $\gamma\le\delta$, let
\begin{equation}\label{PitchfrokSuccessor}
\tag{*}\fork[\delta+1,\gamma]{p}(r):=x{}^\smallfrown\langle y\rangle\text{ iff }\fork{p\restriction\delta,p(\delta)}(\fork[\delta,\gamma]{p\restriction\delta}(r))=(x,y).
\end{equation}

$\br$ Suppose $\delta\in\acc(\mu^{+}+1)$,
and that $\langle (\mathbb P_\beta,\lh_\beta,c_\beta,\langle \pitchfork_{\beta,\gamma}\mid\gamma\le\beta\rangle)\mid\beta<\delta\rangle$ has already been defined.
Define $\mathbb P_\delta:=(P_\delta,\le_\delta)$ by letting $P_\delta$ be all $\delta$-sequences $p$
such that $|B_p|<\mu$ and $\forall\beta<\delta(p\restriction\beta\in P_\beta)$.
Let $p\le_{\delta} q$ iff  $\forall{\beta<\delta}(p\restriction \beta\le_{\beta} q\restriction \beta)$. Let $\lh_\delta:=\lh_1\circ\pi_{\delta,1}$.
Next, we define $c_\delta:P_\delta\rightarrow H_\mu$, as follows.

$\br\br$ If $\delta<\mu^+$, then, for every $p\in P_\delta$, let
$$c_\delta(p):=\{ (\phi_\delta(\gamma),c_{\gamma}(p\restriction\gamma))\mid \gamma\in B_p\}.$$

$\br\br$ If $\delta=\mu^+$, then, given $p\in P_\delta$,
first let $C:=\cl(B_p)$, then define a function $e:C\rightarrow H_\mu$ by stipulating:
$$e(\gamma):=(\phi_\gamma[C\cap\gamma],c_{\gamma}(p\restriction\gamma)),$$
and then let $c_\delta(p):=i$ for the least $i<\mu$ such that $e\s e^i$.

Finally, let $p\in P_{\delta}$, $\gamma\le\delta$ and $r\le_\gamma p\restriction\gamma$ be arbitrary; we need to define $\fork[\delta,\gamma]{p}(r)$.
For $\gamma=\delta$, let $\fork[\delta,\gamma]{p}(r):=r$,
and for $\gamma<\delta$, let
$\fork[\delta,\gamma]{p}(r):=\bigcup\{\fork[\beta,\gamma]{p\restriction\beta}(r)\mid \gamma\le\beta<\delta\}$.

\begin{conv} Even though $(\mathbb P_0,\lh_0)$ is not a graded poset,
in order to smooth up inductive claims that come later, we define $\le_0^0$ to be $\le_0$,
and likewise, for every $p\in P_0$, we interpret $(P_0)^{p}_0$ as $\{q\in P_0\mid q\le_0^0 p\}$.
\end{conv}

\subsection{Verification} We now verify that for all $\delta\le\mu^+$,
$(\mathbb{P}_\delta,\lh_\delta,c_\delta,\langle\pitchfork_{\delta,\gamma}\mid \gamma\le\delta\rangle)$ fulfills requirements (i)--(vi) of Goal~\ref{goals}.
By the recursive definition given so far, it is obvious that Clauses (i) and (iii) hold, so we
focus on the rest. We commence with an expanded version of Clause~(vi).

\begin{lemma}\label{CviIteration}
For all $\alpha\le\delta\leq  \mu^+$, $p\in P_\delta$ and $r\in P_\alpha$ with $r\le_\alpha p\restriction\alpha$, if we let $q:=\fork[\delta,\alpha]{p}(r)$, then:
\begin{enumerate}
\item\label{vi-like}  $q\restriction\beta=\fork[\beta,\alpha]{p\restriction\beta}(r)$ for all $\beta\in[\alpha,\delta]$;
\item\label{SupportForking} $B_{q}=B_p\cup B_r$;
\item\label{C5LemmaOfForking} $q\restriction\alpha=r$;
\item If $\alpha=0$, then $q=p$;
\item\label{CViforking} $p=(p\restriction\alpha)*\emptyset_\delta$ iff $q=r*\emptyset_\delta$;
\item\label{Cviiforking} for all $p'\le^0_\delta p$, if $r\le^0_\alpha p'\restriction\alpha$, then $\fork[\delta,\alpha]{p'}(r)\le_\delta\fork[\delta,\alpha]{p}(r)$.
\end{enumerate}
\end{lemma}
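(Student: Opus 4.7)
The plan is to proceed by transfinite induction on $\alpha$. The base case $\alpha = 1$ forces $\gamma = 1$, and since $\pitchfork_{1,1}$ was defined to be the identity, every clause is trivial. The inductive step splits into successor and limit cases, each of which reduces the five clauses to properties of $\pitchfork$ supplied by \blkref{II} via Definition~\ref{forking}.

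For the successor step $\alpha+1$ with $\gamma \le \alpha$, I would set $a := (p\restriction\alpha, p(\alpha))$ and use \eqref{PitchfrokSuccessor} to write $q = x {}^\smallfrown \langle y\rangle$ where $(x,y) = \fork{}{a}(\fork{\alpha,\gamma}{p\restriction\alpha}(r))$. Applying Definition~\ref{forking}\eqref{frk5} together with \blkref{II}\eqref{C5blk2} yields $x = \fork{\alpha,\gamma}{p\restriction\alpha}(r)$, which combined with the inductive hypothesis gives clause~\eqref{vi-like}, whence also clause~\eqref{C5LemmaOfForking}. For clauses~\eqref{SupportForking} and~\eqref{CViforking} the key observation is the chain of equivalences $y = \emptyset \Leftrightarrow a = \myceil{\pi(a)}{\mathbb A} \Leftrightarrow p(\alpha) = \emptyset \Leftrightarrow \alpha+1 \notin B_p$, which comes from Definition~\ref{forking}\eqref{frk6} and \blkref{II}\eqref{C6blk2}; combined with the inductive support identity for $x$, this yields $B_q = B_p \cup B_r$ and the lift-characterization~\eqref{CViforking}. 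For clause~\eqref{Cviiforking}, setting $x' := \fork{\alpha,\gamma}{p'\restriction\alpha}(r)$ and $a' := (p'\restriction\alpha, p'(\alpha))$, I would use $\lh_\alpha = \lh_\gamma \circ \pi_{\alpha,\gamma}$ and clause~\eqref{C5LemmaOfForking} to verify $x' \le^0_\alpha p'\restriction\alpha$; the inductive hypothesis gives $x' \le_\alpha x$; and then Definition~\ref{forking}\eqref{frk7} followed by the order-preservation of Definition~\ref{forking}\eqref{frk0} produces $\fork{}{a'}(x') \unlhd \fork{}{a}(x') \unlhd \fork{}{a}(x)$, which is the desired inequality.

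For a nonzero limit $\alpha$, coherence of the union defining $\fork{\alpha,\gamma}{p}(r)$ follows from clause~\eqref{vi-like} of the inductive hypothesis, so that $q$ is a well-defined $\alpha$-sequence and clauses~\eqref{vi-like} and~\eqref{C5LemmaOfForking} are immediate. Clause~\eqref{SupportForking} follows by unioning the inductive identities $B_{q\restriction\beta} = B_{p\restriction\beta} \cup B_r$, yielding $B_q = B_p \cup B_r$; since $\mu$ is regular and both $|B_p|, |B_r| < \mu$, also $|B_q| < \mu$, so $q \in P_\alpha$. Clauses~\eqref{CViforking} and~\eqref{Cviiforking} each reduce to the corresponding statement holding at every $\beta \in [\gamma, \alpha)$, which is precisely the inductive hypothesis.

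The main obstacle I anticipate is the support identity~\eqref{SupportForking} at the successor step: one must pin down exactly that the newly adjoined coordinate $y$ is trivial precisely when $p(\alpha)$ is trivial. This demands invoking both the canonical form of \blkref{II} (so lifts have the form $(x,\emptyset)$) and Definition~\ref{forking}\eqref{frk6} (so $\fork{}{a}$ takes lifts to lifts and non-lifts to non-lifts). Without this delicate bookkeeping, $|B_q|$ could grow uncontrollably across limit stages, potentially ejecting $q$ from $P_\alpha$ and derailing the entire inductive construction.
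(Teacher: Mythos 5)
Your proof is correct and follows essentially the same route as the paper's: the paper reduces Clause~(3) to Clause~(1) and Clause~(4) to Clauses~(2)--(3) at the outset and then runs one induction for (1), (2), (5), whereas you fold all five clauses into the induction, but the substantive steps (using Definition~\ref{forking}\eqref{frk5} with \blkref{II}\ref{C5blk2} for the projection of $x$, the lift-preservation equivalence via Definition~\ref{forking}\eqref{frk6} and \blkref{II}\ref{C6blk2} to control $y$, and Definition~\ref{forking}\eqref{frk7} plus order-preservation for Clause~(5)) are identical. You also correctly flag the one delicate point the paper relies on, namely that the support identity at successors hinges on $y=\emptyset$ iff $p(\alpha)=\emptyset$, which is exactly what keeps $|B_q|<\mu$ at limits.
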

\begin{proof} Clause~(3) follows from Clause~(1) and the fact that $\fork[\alpha,\alpha]{p\restriction\alpha}$ is the identity function.
Clause~(5) follows from Clauses (2) and (3).

We now prove Clauses (1), (2), (4) and (6) by induction on $\delta\le\mu^+$:
\begin{itemize}
\item[$\br$] The case $\delta=0$ is trivial, since, in this case, all the conditions under consideration (and their corresponding $B$-sets) are empty,
and all the maps under consideration are the identity.

\item[$\br$] The case $\delta=1$ follows from the fact that, by definition, $\fork[1,0]{p}(r)=p$ and $\fork[1,1]{p}(r)=r$.

\item[$\br$] Suppose $\delta\ge2$ is a successor ordinal,
say $\delta=\delta'+1$, and that the claim holds for $\delta'$.
Fix arbitrary $\alpha\le\delta$, $p\in P_\delta$ and $r\in P_{\alpha}$ with $r\le_\alpha p\restriction\alpha$. Denote $q:=\fork[\delta,\alpha]{p}(r)$.
Recall that $\mathbb P_{\delta}=\mathbb P_{\delta'+1}$ was defined
by feeding $(\mathbb{P}_{\delta'},\lh_{\delta'},c_{\delta'})$ into \blkref{II},
thus obtaining a $\Sigma$-Prikry triple $(\mathbb A,\lh_{\mathbb A},c_{\mathbb A})$
along with a forking projection $({\pitchfork},{\pi})$,
such that each condition in the poset $\mathbb A=(A,\unlhd)$ is a pair $(x,y)$
with $\pi(x,y)=x$.
Furthermore, by the definition of $\pitchfork_{\delta,\alpha}$,
$q=\fork[\delta,\alpha]{p}(r)$ is equal to $x{}^\smallfrown \langle y\rangle$, where
$$(x,y):=\fork{p\restriction\delta', p(\delta')}(\fork[\delta',\alpha]{p\restriction\delta'}(r)).$$
In particular, $q\restriction\delta'=x=\pi(\fork{p\restriction\delta', p(\delta')}(\fork[\delta',\alpha]{p\restriction\delta'}(r)))$,
which, by Definition~\ref{forking}\eqref{frk5}, is equal to $\fork[\delta',\alpha]{p\restriction\delta'}(r)$.

(1) It follows that, for all $\beta\in[\alpha,\delta)$,
$$q\restriction\beta=(q\restriction\delta')\restriction\beta=\fork[\delta',\alpha]{p\restriction\delta'}(r)\restriction\beta=\fork[\beta,\alpha]{p\restriction\beta}(r),$$
where the rightmost equality follows from the induction hypothesis.
In addition, the case $\beta=\delta$ is trivial.

(2) To avoid trivialities, assume $\alpha<\delta$.
By Clause~(1), $q\restriction\delta'=\fork[\delta,\alpha]{p\restriction\delta'}(r)$.
So, by the induction hypothesis, $B_{q\restriction\delta'}=B_{p\restriction\delta'}\cup B_r$,
and we are left with showing that $\delta\in B_{q}$ iff $\delta\in B_p$.
As $q\le_\delta p$, we have $B_{q}\supseteq B_p$, so the forward implication is clear.
Finally, if $\delta\notin B_p$,
then $p(\delta')=\emptyset$, and hence
$$(x,y)=\fork{p\restriction\delta',\emptyset}(\fork[\delta',\alpha]{p\restriction\delta'}(r)).$$
It thus follows from Clause~\ref{C6blk2} of \blkref{II}
together with the fact that $\pitchfork$ satisfies Clause~\eqref{frk6} of Definition~\ref{forking}
that $(x,y)=(\fork[\delta',\alpha]{p\restriction\delta'}(r),\emptyset)$.
Recalling that $q=x {}^\smallfrown \langle y\rangle$, we conclude that $\delta\notin B_{q}$, as desired.

(4) If $\alpha=0$, then,
by the induction hypothesis,
$\fork[\delta',0]{p\restriction\delta'}(r)=p\restriction\delta'$,
so that
\[\begin{aligned}
(x,y)
&=&\fork{p\restriction\delta', p(\delta')}(\fork[\delta',0]{p\restriction\delta'}(r))\\
&=&\fork{p\restriction\delta', p(\delta')}(p\restriction\delta')\\
&=&(p\restriction\delta', p(\delta'))
&=&(x,y),
\end{aligned}\]
where the rightmost equality follows from Lemma~\ref{frkid}.
Altogether, $q=x{}^\smallfrown \langle y\rangle=p$.

(6) To avoid trivialities, assume that $\fork[\delta,\alpha]{p'}(r)\neq\fork[\delta,\alpha]{p}(r)$,
so that $\alpha<\delta$. By Clause~(4), we may also assume that $0<\alpha$.
Fix $p'\le^0_\delta p$ with $r\le^0_\alpha p'\restriction\alpha$.
By the definition of $\le_{\delta'+1}$,
proving $\fork[\delta,\alpha]{p'}(r)\le_\delta\fork[\delta,\alpha]{p}(r)$ amounts to
verifying that $(x',y')\unlhd (x,y)$, where
$$(x',y'):=\fork{p'\restriction\delta', p'(\delta')}(\fork[\delta',\alpha]{p'\restriction\delta'}(r)).$$
Now, by the induction hypothesis, $\fork[\delta',\alpha]{p'\restriction\delta'}(r)\le_{\delta'}\fork[\delta',\alpha]{p\restriction\delta'}(r)$.
So, since $\fork{p\restriction\delta', p(\delta')}$ is order-preserving, it suffices to prove that
$$(x',y')\unlhd \fork{p\restriction\delta', p(\delta')}(\fork[\delta',\alpha]{p'\restriction\delta'}(r)).$$
Denote $a:=(p\restriction\delta', p(\delta'))$ and $a':=(p'\restriction\delta', p'(\delta'))$.
Then, by Clause~(\ref{frk7}) of Definition~\ref{forking}, indeed
$$\fork{a'}(\fork[\delta',\alpha]{p'\restriction\delta'}(r))\unlhd\fork{a}(\fork[\delta',\alpha]{p'\restriction\delta'}(r)).$$

\item[$\br$] Suppose $\delta\in\acc(\mu^++1)$ is an ordinal such that, for all $\delta'<\delta$, $\beta\in[\alpha,\delta']$,
$p\in P_{\delta'}$ and $r\in P_\alpha$ with $r\le_\alpha p\restriction\alpha$,
$$\fork[\beta,\alpha]{p\restriction\beta}(r)=(\fork[\delta',\alpha]{p\restriction\delta'}(r))\restriction\beta.$$

Fix arbitrary $\alpha\le\delta$, $p\in P_\delta$ and $r\in P_{\alpha}$ with $r\le_\alpha p\restriction\alpha$. Denote $q:=\fork[\delta,\alpha]{p}(r)$.
By our definition of $\pitchfork_{\delta,\alpha}$ at the limit stage, we have:
$$q=\bigcup\{\fork[\beta,\alpha]{p\restriction\beta}(r)\mid \alpha\le\beta<\delta\}.$$
By the induction hypothesis, $\langle \fork[\beta,\alpha]{p\restriction\beta}(r)\mid \alpha\le\beta<\delta\rangle$ is a $\s$-increasing sequence,
and $B_{\fork[\beta,\alpha]{p\restriction\beta}(r)}=B_{p\restriction\beta}\cup B_r$ whenever $\alpha\le\beta<\delta$.
It thus follows that $q$ is a legitimate condition,
and Clauses (1), (2), (4) and (6) are satisfied.\qedhere
\end{itemize}
\end{proof}

Our next task is to verify Clauses (ii) and 
(v) of Goal~\ref{goals}:

\begin{lemma}\label{CvIteration}
Suppose that $\delta\le\mu^+$ is such that for all nonzero $\gamma<\delta$, $(\mathbb{P}_\gamma, c_\gamma, \lh_\gamma)$ is $\Sigma$-Prikry.
Then:
\begin{itemize}
\item for all nonzero $\gamma\le\delta$, $({\pitchfork_{\delta,\gamma}},{\pi_{\delta,\gamma}})$ is a forking projection from $(\mathbb{P}_\delta, \lh_\delta)$ to $(\mathbb{P}_\gamma,\lh_\gamma)$, 
where $\pi_{\delta,\gamma}$ is defined as in Goal~\ref{goals}(ii);
\item  if $\delta<\mu^+$, then $({\pitchfork_{\delta,\gamma}},{\pi_{\delta,\gamma}})$ is furthermore a forking projection from $(\mathbb{P}_\delta, \lh_\delta,c_\delta)$ to $(\mathbb{P}_\gamma, \lh_\gamma,c_\gamma)$ 
\item if $\delta=\gamma+1>1$,  then $(\pitchfork_{\delta,\gamma},\pi_{\delta,\gamma})$ has the weak mixing property.
\end{itemize}
\end{lemma}
\begin{proof}
Let us go over the clauses of Definition~\ref{forking}.

Clause~\eqref{frk5} is covered by Lemma~\ref{CviIteration}(\ref{C5LemmaOfForking}), and
Clause~\eqref{frk7} is covered by Lemma~\ref{CviIteration}(\ref{Cviiforking}).
Clause~(\ref{frk3}) is obvious, since
for all nonzero $\gamma<\delta$ and $p\in P_\gamma$, a straight-forward verification makes it clear that $p*\emptyset_\delta$ is the greatest element of
$\{ q \in P_\delta\mid \pi_{\delta,\gamma}(q)=p\}$.
Consequently, Clause~(\ref{frk6}) follows from Lemma~\ref{CviIteration}(\ref{CViforking}).

Thus, we are left with verifying Clauses \eqref{frk1},\eqref{frk0},\eqref{frk4} and \eqref{frk2}.
The next claim takes care of the first three.

\begin{claim}\label{PropertiesForking} For all nonzero $\gamma\le\delta$ and $p\in P_\delta$:
\begin{enumerate}
  \item\label{CiiIteration} $\pi_{\delta,\gamma}$ forms a projection from $\mathbb P_\delta$ to $\mathbb P_\gamma$, and $\lh_\delta=\lh_\gamma\circ\pi_{\delta,\gamma}$;
\item\label{ClaimForkingC1} $\fork[\delta,\gamma]{p}$ is an order-preserving function from $(\conez{\gamma}{(p\restriction\gamma)},{\le_\gamma})$ to $(\conez{\delta}{p},{\le_\delta})$;
\item\label{ClaimForkingC3} for all $n,m<\omega$ and $q\le^{n+m}_\delta  p$, $m(p,q)$ exists and, furthermore,
  $$m(p,q)=\fork[\delta,\gamma]{p}({m(p\restriction\gamma,q\restriction\gamma)}).$$
\end{enumerate}
\end{claim}
\begin{proof}  We commence by proving \eqref{ClaimForkingC1} and \eqref{ClaimForkingC3} by induction on $\delta\le\mu^+$:

\begin{itemize}
\item[$\br$] The case $\delta=1$ is trivial, since, in this case, $\gamma=\delta$.

\item[$\br$] Suppose $\delta=\delta'+1$ is a successor ordinal and that the claim holds for $\delta'$.
Let $\gamma\le\delta$ and $p\in P_\delta$ be arbitrary. To avoid trivialities, assume $\gamma<\delta$.
By the induction hypothesis,
$\fork[\delta',\gamma]{p\restriction\delta'}$ is an order-preserving function from $\conez{\gamma}{(p\restriction\gamma)}$ to $\conez{\delta'}{(p\restriction\delta')}$.

Recall that $\mathbb P_{\delta}=\mathbb P_{\delta'+1}$ was defined
by feeding $(\mathbb{P}_{\delta'},\lh_{\delta'},c_{\delta'})$ into \blkref{II},
thus obtaining a $\Sigma$-Prikry triple $(\mathbb A,\lh_{\mathbb A},c_{\mathbb A})$
along with the pair $({\pitchfork},{\pi})$.
Now, as $\fork{p\restriction\delta',p(\delta')}$
and $\fork[\delta',\gamma]{p\restriction\delta'}$ are both order-preserving,
the very definition of $\fork[\delta,\gamma]{p\restriction\gamma}$ and $\le_{\delta'+1}$ implies that $\fork[\delta,\gamma]{p\restriction\gamma}$ is order-preserving.
In addition, as $(x,y)$ is a condition in $\mathbb A$ iff $x{}^\smallfrown \langle y\rangle\in P_\delta$
and as
$\fork{p\restriction\delta',p(\delta')}$ is an order-preserving function from $\conez{\delta'}{(p\restriction\delta')}$ to $\conea{(p\restriction\delta',p(\delta'))}$,
we infer that, for all $r\le_\gamma p\restriction\gamma$,
$\fork[\delta,\gamma]{p\restriction\gamma}(r)$ is in $\conez{\delta}{p}$.

Let $q\le^{n+m}_\delta  p$ for some $n,m<\omega$.
Let $$(x,y):=m((p\restriction\delta', p(\delta')), (q\restriction\delta', q(\delta'))).$$
Trivially, $m(p,q)$ exists and is equal to $x{}^\smallfrown \langle y\rangle$.
We need to show that $m(p,q)=\fork[\delta,\gamma]{p}({m(p\restriction\gamma,q\restriction\gamma)})$.
By Definition~\ref{forking}\eqref{frk4},
$$(x,y)=\fork{p\restriction\delta', p(\delta')}(m(p\restriction\delta', q\restriction\delta')).$$
By the induction hypothesis, $$m(p\restriction\delta',q\restriction\delta')=\fork[\delta',\gamma]{p\restriction\delta'}({m(p\restriction\gamma, q\restriction\gamma)}),$$
and so it follows that
$$(x,y)=\fork{p\restriction\delta', p(\delta')}(\fork[\delta',\gamma]{p\restriction\delta'}({m(p\restriction\gamma,q\restriction\gamma)})).$$
Thus, by the definition of $\pitchfork_{\delta,\gamma}$ and the above equation,  we have that $\fork[\delta,\gamma]{p}({m(p\restriction\gamma,q\restriction\gamma)})$ is indeed equal to $x{}^\smallfrown \langle y\rangle$.

\item[$\br$] Suppose $\delta\in\acc(\mu^++1)$ is an ordinal for which the claim holds below $\delta$.
Let $\gamma\le\delta$ and $p\in P_\delta$ be arbitrary. To avoid trivialities, assume $\gamma<\delta$.
By Lemma~\ref{CviIteration}(\ref{vi-like}), for every $r\in \conez{\gamma}{(p\restriction\gamma)}$:
$$\fork[\delta,\gamma]{p}(r)=\bigcup_{\gamma\le\delta'<\delta}\fork[\delta',\gamma]{p\restriction\delta'}(r).$$
As for all $q,q'\in P_\delta$, $q\le_\delta q'$ iff $\forall \delta'<\delta(q\restriction\delta'\le_{\delta'} q'\restriction\delta')$,
the induction hypothesis implies that
$\fork[\delta,\gamma]{p}$ is an order-preserving function from $\conez{\gamma}{(p\restriction\gamma)}$ to $\conez{\delta}{p}$;

Finally, let $q\le_\delta  p$; we shall show that $m(p,q)$ exists and is, in fact, equal to the condition $\fork[\delta,\gamma]{p}({m(p\restriction\gamma,q\restriction\gamma)})$.
By Lemma~\ref{CviIteration}(\ref{vi-like}) and the induction hypothesis,
$$\fork[\delta,\gamma]{p}(m(p\restriction\gamma, q\restriction\gamma))=\bigcup_{\gamma\le{\delta'}<\delta} m(p\restriction{\delta'}, q\restriction{\delta'}),$$
call it $r$. We shall show that $r$ plays the role of $m(p,q)$.

By the definition of $\le_\delta$, it is clear that $q\le^m_\delta r\le^n_\delta p$, so it remains to show that it is the greatest condition in $(P_\delta^p)_{n}$ to satisfy this.
Fix an arbitrary $s\in (P_\delta^p)_{n}$ with $q\le^m_\delta s$.
For each ${\delta'}<\delta$, $q\restriction{\delta'}\le^m_{\delta'} s\restriction{\delta'}\le^n_{\delta'} p\restriction{\delta'}$, so that $s\restriction{\delta'}\le_{\delta'} m(p\restriction{\delta'}, q\restriction{\delta'})$, and thus $s\leq_\delta r$. Altogether this shows that  $r=m(p,q)$.
\end{itemize}
After proving Clauses \eqref{ClaimForkingC1} and \eqref{ClaimForkingC3} above,
we are now left with proving \eqref{CiiIteration}.  The case $\gamma=\delta$ is trivial, so assume $\gamma<\delta$.
Clearly, $\pi_{\delta,\gamma}$ is order-preserving and also $\pi_{\delta,\gamma}(\emptyset_\delta)=\emptyset_\gamma$.
Let $p\in {P}_\delta$ and $q\in {P}_\gamma$ be such that $q\le_\gamma \pi_{\delta,\gamma}(p)$.
Set $q^*:=\fork[\delta,\gamma]{p}(q)$. By Lemma~\ref{CviIteration}\eqref{C5LemmaOfForking},  $\pi_{\delta,\gamma}(q^*)=q$ and by Clause~\eqref{ClaimForkingC1} of this claim, $q^*\le_\delta p$.
Altogether, $\pi_{\delta,\gamma}$ is indeed a projection.
For the second part, recall that, for all $\beta\leq  \mu^+$, $\lh_\beta:=\lh_1\circ \pi_{\beta,1}$, hence $\lh_\delta=\lh_1\circ \pi_{\delta,1}=\lh_1\circ (\pi_{\gamma,1}\circ\pi_{\delta,\gamma})=(\lh_1\circ \pi_{\gamma,1})\circ\pi_{\delta,\gamma}=\lh_\gamma\circ \pi_{\delta,\gamma}.$
\end{proof}

We are left with verifying Clause~\eqref{frk2} of Definition~\ref{forking} to show that $(\pitchfork_{\delta,\gamma}, \pi_{\delta,\gamma})$ is a forking projection from $(\mathbb{P}_\delta,\ell_\delta,c_\delta)$ to $(\mathbb{P}_\gamma,\ell_\gamma,c_\gamma)$.

\begin{claim}\label{C2ClaimForking} Suppose $\delta\neq\mu^+$.
For all $p,p'\in P_\delta$ with  $c_\delta(p)=c_\delta(p')$ and all nonzero $\gamma\le\delta$:
\begin{itemize}
\item $c_\gamma(p\restriction\gamma)=c_\gamma(p'\restriction\gamma)$, and
\item $\fork[\delta,\gamma]{p}(r)=\fork[\delta,\gamma]{p'}(r)$ for every $r\in (P_\gamma)^{p\restriction\gamma}_0\cap (P_\gamma)^{p'\restriction\gamma}_0$.
\end{itemize}
\end{claim}
\begin{proof}
By induction on $\delta<\mu^+$:
\begin{itemize}
\item[$\br$] The case $\delta=1$ is trivial, since, in this case, $\gamma=\delta$.
\item[$\br$] Suppose $\delta=\delta'+1$ is a successor ordinal
and that the claim holds for $\delta'$.
Fix an arbitrary pair $p,p'\in P_\delta$ with $c_\delta(p)=c_\delta(p')$.

Recall that $\mathbb P_{\delta}=\mathbb P_{\delta'+1}$ was defined
by feeding $(\mathbb{P}_{\delta'},\lh_{\delta'},c_{\delta'})$ into \blkref{II},
thus obtaining a $\Sigma$-Prikry triple $(\mathbb A,\lh_{\mathbb A},c_{\mathbb A})$
along the pair $({\pitchfork},{\pi})$.
By the definition of $c_{\delta'+1}$, we have
$$c_{\mathbb A}(p\restriction\delta',p(\delta'))=c_{\delta}(p)=c_{\delta}(p')=c_{\mathbb A}(p'\restriction\delta',p'(\delta')).$$
So, as $({\pitchfork},{\pi})$ is a forking projection from $(\mathbb A,\lh_{\mathbb A},c_{\mathbb A})$ to
$(\mathbb{P}_{\delta'},\lh_{\delta'},c_{\delta'})$, we have $c_{\delta'}(p\restriction\delta')=c_{\delta'}(p'\restriction\delta')$,
and, for all $r\in (P_{\delta'})_0^{p\restriction\delta'}\cap(P_{\delta'})_0^{p'\restriction\delta'}$,
$\fork{p\restriction\delta',p(\delta')}(r)=\fork{p'\restriction\delta',p'(\delta')}(r)$.

Now, as $c_{\delta'}(p\restriction\delta')=c_{\delta'}(p'\restriction\delta')$,
the induction hypothesis implies that $c_{\gamma}(p\restriction\gamma)=c_{\gamma}(p'\restriction\gamma)$ for all nonzero $\gamma\le\delta'$.
In addition, the case $\gamma=\delta$ is trivial.

Finally, fix a nonzero $\gamma\le\delta$ and $r\in (P_\gamma)^{p\restriction\gamma}_0\cap (P_\gamma)^{p'\restriction\gamma}_0$,
and let us prove that $\fork[\delta,\gamma]{p}(r)=\fork[\delta,\gamma]{p'}(r)$. To avoid trivialities, assume $\gamma<\delta$.
It follows from the definition of $\pitchfork_{\delta,\gamma}$ that $\fork[\delta,\gamma]{p}(r)=x{}^\smallfrown\langle y\rangle$ and $\fork[\delta,\gamma]{p'}(r)=x'{}^\smallfrown\langle y'\rangle$,
where:
\begin{itemize}
\item $(x,y):=\fork{p\restriction\delta', p(\delta')}(\fork[\delta',\gamma]{p\restriction\delta'}(r))$, and
\item $(x',y'):=\fork{p'\restriction\delta', p'(\delta')}(\fork[\delta',\gamma]{p'\restriction\delta'}(r))$.
\end{itemize}
But we have already pointed out that the induction hypothesis implies that $\fork[\delta',\gamma]{p\restriction\delta'}(r)=\fork[\delta',\gamma]{p'\restriction\delta'}(r)$,
call it, $r'$.   So, we just need to prove that $\fork{p\restriction\delta', p(\delta')}(r')=\fork{p'\restriction\delta', p'(\delta')}(r')$.
But we also have $c_{\mathbb A}(p\restriction\delta,p(\delta'))=c_\delta(p)=c_{\delta}(p')=c_{\mathbb A}(p'\restriction\delta,p'(\delta'))$ 
and by our choice of $r$ and  Clause~ \eqref{ClaimForkingC1} of Claim~\ref{PropertiesForking}, $r'\in (P_{\delta'})_0^{p\restriction \delta'}\cap (P_{\delta'})_0^{p'\restriction \delta'}$. 
So, as $({\pitchfork},{\pi})$ is a forking projection from $(\mathbb A,\lh_{\mathbb A},c_{\mathbb A})$ to $(\mathbb{P}_{\delta'},\lh_{\delta'},c_{\delta'})$,
Clause~\eqref{frk2} of Definition~\ref{forking} implies that
$$\fork{p\restriction\delta', p(\delta')}(r')=\fork{p'\restriction\delta', p'(\delta')}(r'),$$ as desired.
\item[$\br$] Suppose $\delta\in\acc(\mu^+)$ is an ordinal for which the claim holds below $\delta$.
For any condition $q\in\bigcup_{\delta'\le\delta}P_{\delta'}$, define a function $f_q:B_q\rightarrow H_\mu$ via $f_q(\delta'):=c_{\delta'}(q\restriction\delta')$.
Now, fix an arbitrary pair $p,p'\in P_\delta$ with $c_\delta(p)=c_\delta(p')$.
By the definition of $c_\delta$ this means that
$$\hspace{35pt}\{ (\phi_\delta(\gamma),c_{\gamma}(p\restriction\gamma))\mid \gamma\in B_p\}=\{ (\phi_\delta(\gamma),c_{\gamma}(p'\restriction\gamma))\mid \gamma\in B_{p'}\}.$$
As $\phi_\delta$ is injective,  $f_p=f_{p'}$.
Next, let $\gamma\le\delta$ be nonzero; we need to show that $c_\gamma(p\restriction\gamma)=c_\gamma(p'\restriction\gamma)$.
The case $\gamma=\delta$ is trivial, so assume $\gamma<\delta$.

Now, if $\dom(f_p)\setminus\gamma$ is nonempty, then for $\delta':=\min(\dom(f_p)\setminus\gamma)$,
we have $c_{\delta'}(p\restriction\delta')=f_p(\delta')=f_{p'}(\delta')=c_{\delta'}(p'\restriction\delta')$,
and then the induction hypothesis entails $c_\gamma(p\restriction\gamma)=c_\gamma(p'\restriction\gamma)$.
In particular, if $\dom(f_p)$ is unbounded in $\delta$,
then $c_\gamma(p\restriction\gamma)=c_\gamma(p'\restriction\gamma)$ for all $\gamma\le\delta$.

Next, suppose that $\dom(f_p)$ is bounded in $\delta$
and let $\epsilon<\delta$ be the least ordinal to satisfy $\dom(f_p)\s\epsilon$.
We already know that $c_\gamma(p\restriction\gamma)=c_\gamma(p'\restriction\gamma)$ for all $\gamma<\epsilon$,
and now prove by induction that $c_\gamma(p\restriction\gamma)=c_\gamma(p'\restriction\gamma)$ for all $\gamma\in[\epsilon,\delta)$.
For a successor ordinal $\gamma$, this follows from Clauses \ref{C6blk2} and \ref{C7blk2} of \blkref{II},
and for a limit ordinal $\gamma$, this follows from the fact that the injectivity of $\phi_\gamma$ and the equality $f_{p\restriction\gamma}=f_p=f_{p'}=f_{p'\restriction\gamma}$ implies that $c_\gamma(p\restriction\gamma)=c_{\gamma}(p'\restriction\gamma)$.

\smallskip

Finally, fix a nonzero $\gamma\le\delta$ and $r\in (P_\gamma)^{p\restriction\gamma}_0\cap (P_\gamma)^{p'\restriction\gamma}_0$,
and let us prove that $\fork[\delta,\gamma]{p}(r)=\fork[\delta,\gamma]{p'}(r)$. To avoid trivialities, assume $\gamma<\delta$.
We already know that, for all $\delta'\in[\gamma,\delta)$, $c_{\delta'}(p\restriction\delta')=c_{\delta'}(p'\restriction\delta')$,
and so the induction hypothesis implies that $\fork[\delta',\gamma]{p\restriction\delta'}(r)=\fork[\delta',\gamma]{p'\restriction\delta'}(r)$,
and then by Lemma~\ref{CviIteration}(\ref{vi-like}):
\[\begin{aligned}
\fork[\delta,\gamma]{p}(r)&=\bigcup_{\gamma\le\delta'<\delta}\fork[\delta',\gamma]{p\restriction\delta'}(r)=\\
&=\bigcup_{\gamma\le\delta'<\delta}\fork[\delta',\gamma]{p'\restriction\delta'}(r)=\fork[\delta,\gamma]{p'}(r),
\end{aligned}\]
as desired.\qedhere
\end{itemize}
\end{proof}

\begin{claim}\label{IndeedMixingAtSuccessors}
If $\delta=\beta+1>1$, then $(\pitchfork_{\delta,\beta},\pi_{\delta,\beta})$ has the weak mixing property.
\end{claim}
\begin{proof}
Once again, recall that $\mathbb P_{\beta+1}$ was obtained by feeding $(\mathbb{P}_{\beta},\lh_{\beta},c_{\beta})$ into \blkref{II},
thus obtaining a $\Sigma$-Prikry triple $(\mathbb A,\lh_{\mathbb A},c_{\mathbb A})$,
along with a pair $({\pitchfork},{\pi})$  having the weak mixing property. Let  $\tp$ be a type over $({\pitchfork},{\pi})$ 
witnessing  the weak mixing property. 
For each condition $p\in P_{\beta+1}$, set $ \tp_{\beta+1}(p):=\tp(p\restriction\beta,p(\beta))$. 
The canonical isomorphism from $\mathbb{A}$ to $\mathbb{P}_{\beta+1}$ (i.e., $(x,y)\mapsto x{}^\smallfrown \langle y\rangle$)
makes it clear that $\tp_{\beta+1}$ is a type over $(\pitchfork_{\beta+1,\beta},\pi_{\beta+1,\beta})$ witnessing the weak mixing property.
\end{proof}
This completes the proof of Lemma~\ref{CvIteration}.
\end{proof}

\begin{definition}\label{particularorderings}  For each nonzero $\beta<\mu^+$,
we let $\tp_{\beta+1}$ be the witness to the weak mixing property of $(\pitchfork_{\beta+1,\beta},\pi_{\beta+1,\beta})$,
as defined in the proof of Subclaim~\ref{IndeedMixingAtSuccessors}.
\end{definition}

Recalling Definition~\ref{SigmaPrikry}\eqref{c2},
for all nonzero $\delta\le\mu^+$ and $n<\omega$, we need to identify a candidate for a canonical dense subposet $\z{\mathbb P}_{\delta n}=(\z{P}_{\delta n},\le_\delta)$ of $\mathbb P_{\delta n}$.
We do this next.

\begin{definition}\label{DefRingForLimits}
Let $n<\omega$. Set $\z{P}_{1n}:={}^{1}{{(\z{Q}_n})}$.\footnote{Here, $\z{Q}_n$ is obtained from Clause~\eqref{c2} of Definition~\ref{SigmaPrikry} with respect to the triple $(\mathbb{Q},\ell,c)$ given by \blkref{I}.}
Then, for each   $\delta\in[2,\mu^+]$, define $\z P_{\delta n}$ by recursion:
$$\z{P}_{\delta n}:=\begin{cases}
\{p\in P_\delta\mid \pi_{\delta,\beta}(p)\in \z{P}_{\beta n}\ \&\ \mtp_{\beta+1}(p)=0\},&\text{if }\delta=\beta+1;\\
\{p\in P_\delta\mid \pi_{\delta,1}(p)\in\z{P}_{1n}\ \&\ \forall\gamma\in B_p\,\mtp_\gamma(\pi_{\delta,\gamma}(p))=0\},&\text{otherwise}.\\
\end{cases}
$$
\end{definition}

\begin{lemma}\label{ringcoheres}\label{LiftingAndRings} Let $n<\omega$ and $1\leq \beta<\delta\leq \mu^+$. Then:
\begin{enumerate}
\item $\pi_{\delta,\beta}``\z{P}_{\delta n}\s \z{P}_{\beta n}$;
\item For every $p\in \z{P}_{\beta n}$, $p* \emptyset_{\delta}\in \z{P}_{\delta n}$.
\end{enumerate}
\end{lemma}
\begin{proof} By a straight-forward induction, relying on Clause~\eqref{type6} of Definition~\ref{type}.
\end{proof}

We are now left with addressing  Clause~(iv) of Goal~\ref{goals}. 
Prior to that we will provide a sufficient condition securing that for each $\delta\in\acc(\mu^++1)$, the pair $(\mathbb{P}_\delta,\ell_\delta)$ has property $\mathcal{D}$.  
For this, we establish a version of the Weak Mixing Property (Definition~\ref{mixingproperty}) for limit stages.

\begin{lemma}\label{MixingforLimits}
Let $\delta\in \acc(\mu^++1)$.
For all $a\in P_\delta$, $n<\omega$, $\vec{r}=\langle r_\xi\mid \xi<\chi\rangle$, and $p'\le^0\pi_{\delta,1}(a)$, and for every function $g:W_n(\pi_{\delta,1}(a))\rightarrow \mathbb{P}_\delta\downarrow a$,  if all of the following hold:
\begin{enumerate}\setcounter{enumi}{-1}
\item $\langle B_{g(r_\xi)}\mid \xi<\chi\rangle$ is  $\s$-increasing. 
For each $\gamma$ in $B:=\bigcup_{\xi<\chi} B_{g(r_\xi)}$, we
denote $\iota_\gamma:=\min\{\xi<\chi\mid \gamma\in B_{g(r_\xi)}\}$;

\item $\vec{r}$ is a good enumeration of $W_n(\pi_{\delta,1}(a))$;
\item  $\langle \pi_{\delta,1}(g(r_\xi))\mid \xi<\chi\rangle$ is diagonalizable with respect to $\vec r$ as witnessed by $p'$; 
\item   for all  $\gamma\in B$ and $\xi<\chi$, 
$$\begin{array}{lll}
\ \ \ \ \ &\dom(\tp_\gamma(\pi_{\delta,\gamma}(g(r_\xi))))=0,&\xi<\iota_{\gamma};\\
&\dom(\tp_\gamma(\pi_{\delta,\gamma}(g(r_\xi))))\ge 1,&\xi=\iota_{\gamma};\\
&\dom(\tp_\gamma(\pi_{\delta,\gamma}(g(r_\xi))))>(\sup_{\eta<\xi}\dom(\tp_\gamma(\pi_{\delta,\gamma}(g(r_\eta)))))+1,&\xi>\iota_{\gamma}.
\end{array}$$
\item for all $\gamma\in B$, $\xi\in (\iota_\gamma,\chi)$, 
and  $i\le \sup_{\eta<\xi}\dom(\tp_\gamma(\pi_{\delta,\gamma}(g(r_\eta))))$, 
$$\ \ \ \ \ \  i\ge \dom(\tp_\gamma(\pi_{\delta,\gamma}(a)))\implies\tp_\gamma(\pi_{\delta,\gamma}(g(r_\xi)))(i)\leq\mtp_\gamma(\pi_{\delta,\gamma}(a));$$
\item for all $\gamma\in B$, $\sup_{\iota_\gamma\leq \xi<\chi}\mtp_\gamma(\pi_{\delta,\gamma}(g(r_\xi)))<\omega$,
\end{enumerate}
then there exists  $b\in P_\delta$ such that:
\begin{enumerate}[label=(\alph*)]
\item $\pi_{\delta,1}(b)=p'$;
\item $b\le^0_\delta a$;
\item for all $q'\in W_n(p')$, $\fork[\delta,1]{b}(q')\le_\delta^0 g(w(\pi_{\delta,1}(a),q'))$.
\end{enumerate}
\end{lemma}
\begin{proof} Let $a$, $n$, $\vec{r}$, $p'$ and $g:W_n(a\restriction1)\rightarrow \mathbb{P}_\delta\downarrow a$ be as above.
Let $\langle \gamma_\tau\mid \tau<\theta\rangle$ be the increasing enumeration of the set $B$. 
From Goal~\ref{goals}(i) and $\chi<\mu$, we infer that $\theta<\mu$. 
For each $\tau<\theta$:
\begin{itemize}
\item as  $\gamma_\tau$ is a successor ordinal, we let $\beta_\tau$ denote its predecessor;
\item for every $\xi<\chi$, let $r^\tau_\xi:=\fork[\beta_\tau, 1]{a\restriction \beta_\tau}(r_\xi)$.
By Fact~\ref{forkingfacts}, $\vec{r}^\tau:=\langle r^\tau_\xi\mid \xi<\chi\rangle$ is a good enumeration of $W_n(a\restriction \beta_\tau)$;
\item derive a map $g_\tau\colon W_n(a\restriction \beta_\tau)\rightarrow\mathbb{P}_{\gamma_\tau}\downarrow(a\restriction \gamma_\tau)$ via
$$g_\tau(r^\tau_\xi):= g(r_\xi)\restriction \gamma_\tau.$$
\end{itemize}

\begin{claim} Suppose there is a sequence $\langle (b_\tau,p^\tau)\mid \tau<\theta\rangle\in  \prod_{\tau<\theta}(P_{\gamma_\tau}\times P_{\beta_\tau})$
satisfying that for all $\tau<\theta$:
\begin{enumerate}[label=$\rm{(\Roman*)}$]
\item  $b_0\restriction 1=p^0\restriction 1=p'$;
\item $b_{\tau}\restriction \gamma_{\tau'}=b_{\tau'}$ for all $\tau'<\tau$;
\item  $b_\tau$ witnesses the conclusion of Definition~\ref{mixingproperty} with respect to the tuple $(a\restriction \gamma_\tau, \vec{r}^\tau, p^\tau ,g_\tau, \iota_{\gamma_\tau})$.
In particular, $p^\tau\le_{\beta_\tau}^0 a\restriction \beta_\tau$ diagonalizes $\langle g_\tau(r^\tau_\xi)\restriction \beta_\tau\mid \xi<\chi\rangle$.
\end{enumerate}

Then there is $b\in P_\delta$ as in the conclusion of the Lemma.
\end{claim}
\begin{proof} By (II) above,
we may let $b^*:=\bigcup_{\tau<\theta} b_\tau$,
so that  $b^*\in P_\epsilon$ for $\epsilon:=\dom(b^*)$. 
For each $\tau<\theta$, Clauses (II) and (III) yield
$$b^*\restriction \gamma_\tau={b_\tau}\le^0_{\gamma_\tau}{a\restriction \gamma_\tau},$$
and hence $b^*\le^0_\epsilon(a\restriction\epsilon)$.
So we may let $b:=\fork[\delta,\epsilon]{a}(b^*)$,
and infer from (I) that $b\restriction 1=p'$. Also, we have that $b\restriction \gamma\le^0_{\gamma} a\restriction \gamma$, 
for each $\gamma\in B_a$. This shows that Clauses (a) and (b) 
of the lemma hold. 

We are now left with verifying Clause (c).  Let $q'\in W_n(p')$; we want to show that
$\fork[\delta,1]{b}(q')\le^0_\delta g(w(a\restriction 1,q'))$.  
Note that by Lemma~\ref{CviIteration}\eqref{SupportForking}, $B_a\s B_{b^*}=B_b$,
so that $b=b^*\ast \emptyset_\delta$.
Hence,  $\{\gamma_\tau\mid \tau<\theta\}$ is cofinal in $B_b$, and so
it suffices to prove that, for each $\tau<\theta$, 
$$\fork[\delta,1]{b}(q')\restriction\gamma_\tau\le^0_{\gamma_\tau} g(w(a\restriction 1,q'))\restriction\gamma_\tau.$$

For each $\tau<\theta$,
combining Clause (II) with Lemma~\ref{CviIteration}\eqref{vi-like} we have $$\fork[\delta,1]{b}(q')\restriction\gamma_\tau=\fork[\gamma_\tau,1]{b\restriction \gamma_\tau}(q')=\fork[\gamma_\tau,1]{b_\tau}(q'),$$
hence it suffices to check  that
\begin{equation}\label{eq1}
 \tag{$\star$}\fork[\gamma_\tau,1]{b_\tau}(q')\le_{\gamma_\tau}^0 g(w(a\restriction 1,q'))\restriction\gamma_\tau.
 \end{equation}

By the definition of $\pitchfork_{\gamma_\tau,1}$ in Subsection~\ref{DefiningTheIteration} (see equation~\eqref{PitchfrokSuccessor} on page~\pageref{PitchfrokSuccessor}),  it follows that
\begin{equation}\label{eq2}
   \tag{$\star\star$}\fork[\gamma_\tau,1]{b_\tau}(q')=\fork[\gamma_\tau,\beta_\tau]{b_\tau}(\fork[\beta_\tau,1]{b_\tau\restriction \beta_\tau}(q')).
\end{equation}

Since $b_\tau\restriction 1=p'$ and $q'\in W_n(p')$, Lemma~\ref{CvIteration} yields  $r:=\fork[\beta_\tau,1]{b_\tau\restriction \beta_\tau}(q')$ is in $W_n(b_\tau\restriction \beta_\tau)$.
Combining equation \eqref{eq2} with (III), we infer that
$$\fork[\gamma_\tau,1]{b_\tau}(q')=\fork[\gamma_\tau,\beta_\tau]{b_\tau}(r)\le_{\gamma_\tau}^0 g_{\tau}(w(a\restriction \beta_\tau, r))=g(w(a\restriction 1,q'))\restriction\gamma_\tau, $$
where the rightmost equality follows from the definition of $g_\tau$ and  the fact that $r\restriction 1=q'$.
This verifies equation \eqref{eq1} and yields the claim. 
\end{proof}

Let us now argue by induction that such $\langle (b_\tau, p^\tau) \mid \tau<\theta\rangle$ exists.

\begin{claim}\label{Claimdelta=0mixing}
There is a pair $(b_0,p^0)$ for which Clauses \rm{(I)}--\rm{(III)} hold.
\end{claim}
\begin{proof}  Clause~\rm{(II)} is trivial at this stage.
Setting $p^0:=\fork[\beta_0,1]{a\restriction \beta_0}(p')$ takes care of the second part of Clause~\rm{(I)},
and we shall come back to the first part towards the end.
Now, let us examine the tuple $(a\restriction \gamma_0, \vec{r}^0, p^0 ,g_0,\iota_{\gamma_0})$
against the clauses of Definition~\ref{mixingproperty}
with respect to the forking projection $(\pitchfork_{\gamma_0,\allowbreak \beta_0}, \pi_{\gamma_0,\beta_0})$: Clause~\eqref{Mixing1} is obvious and  Clauses~\eqref{Mixing3}--\eqref{Mixing5}  follow combining the corresponding clauses in the lemma with the definition of  $g_0$.

Regarding Clause \eqref{Mixing2}, we claim that  $p^0$ 
diagonalizes $\langle g_0(r^0_\xi)\restriction \beta_0\mid \xi<\chi\rangle$. 
To this end, we will check  $(\alpha)$ and $(\beta)$ of Proposition~\ref{TheWitnessOfMixingYieldsDiagona}, when this is regarded with respect to the forking projection $(\pitchfork_{\beta_0,1}, \pi_{\beta_0,1})$, and the parameters $a\restriction \beta_0$, $\vec{r}^0$, $\langle g_0(r^0_\xi)\restriction \beta_0\mid \xi<\chi\rangle$, $p'$ and $p^0$, respectively. 

\begin{itemize}
\item[$(\alpha)$] Note that $g_0(r^0_\xi)\upharpoonright 1=g(r_\xi)\upharpoonright 1$ for each $\xi<\chi$. Therefore,  Clause~\eqref{Mixing1}    implies that $p'$ diagonalizes $\langle g_0(r^0_\xi)\restriction 1\mid \xi<\chi\rangle$.

\item[$(\beta)$] By Clause \eqref{Mixing1} of the lemma,   $p'\le^0_1 a\restriction 1$, hence $p^0\le_{\beta_0}^0 a\restriction \beta_0$.
Let $q'\in W_n(p')$. 
Again by Clause~\eqref{Mixing1},  $q'\le_1^0 g(r_\xi)\restriction 1$, where $\xi$ is the unique index such that $r_\xi=w(a\restriction 1, q'\restriction 1)$.
Finally, combining Lemma~\ref{CviIteration}\eqref{CViforking} and Lemma~\ref{CvIteration} we have
$$\fork[\beta_0,1]{p^0}(q')\le^{0}_{\beta_0}\fork[\beta_0,1]{a\restriction \beta_0}(g(r_\xi)\restriction 1)=g(r_\xi)\restriction 1\ast \emptyset_{\beta_0}=g_0(r^0_\xi)\restriction\beta_0,$$
where the above equalities follow from $\beta_0< \min (B)$. 
\end{itemize}

  Altogether, $(a\restriction \gamma_0, \vec{r}^0, p^0 ,g_0,\iota_{\gamma_0})$ witnesses Clauses \eqref{Mixing1}--\eqref{Mixing5} of Definition~\ref{mixingproperty}. Thus,  appealing to Lemma~\ref{CvIteration},
we obtain  $b\in P_{\gamma_0}$ such that $b\restriction \beta_0=p^0$ and $b\le_{\gamma_0}^0 a\restriction \gamma_0$  
that witnesses the conclusion of Definition~\ref{mixingproperty}.  
Clearly,  $b_0:=b$ and $p^0$ are as wanted. 
\end{proof}

Suppose now $\tau<\theta$, and that $\langle (b_{\tau'},p^{\tau'})\mid {\tau'}<\tau\rangle$ has been constructed maintaining (I)--(III). 

\begin{claim}\label{ClaimInductivestep}
There is a pair $(b_\tau,p^\tau)$ satisfying Clauses $\rm{(I)}$--$\rm{(III)}$.
\end{claim}
\begin{proof}
Set $b^*:=\bigcup_{{\tau'}<\tau}b_{\tau'}$ and  $\epsilon:=\dom(b^*)$. Note that  $\epsilon\leq \beta_\tau$, as $\gamma_\tau\in\nacc(\mu^+)$. 
Also, using  (I) and (II) of the induction, $b^*\in P_{\epsilon}$ and $\pi_{\epsilon,1}(b^*)=p'$. 

Set $p^\tau:=\fork[\beta_\tau,\epsilon]{a\restriction \beta_\tau}(b^*)$.
As in the previous claim, to obtain a condition $b_\tau$ satisfying (III), it suffices to show that $p^\tau$
diagonalizes $\langle g_\tau(r^\tau_\xi)\restriction \beta_\tau\mid \xi<\chi\rangle$.
For this, we will want to appeal to Proposition~\ref{TheWitnessOfMixingYieldsDiagona} with $(\pitchfork_{\beta_\tau,1} \pi_{\beta_\tau,1})$, $a\restriction \beta_\tau$, $\vec{r}^\tau$, $\langle g_\tau(r^\tau_\xi)\restriction \beta_\tau\mid \xi<\chi\rangle$, $p'$ and $p^\tau$.
The verification of Clause~($\alpha$) is exactly the same as in Claim~\ref{Claimdelta=0mixing},
so we focus on Clause~($\beta$).
By (II) and (III) of the induction hypothesis, $b^*\le^0_\epsilon a\restriction \epsilon$ and $b^*\restriction 1=p'$. Hence, $p^\tau\in P_{\beta_\tau}$, ${p^\tau}\le^0_{\beta_\tau}{a\restriction \beta_\tau}$ and $p^\tau\restriction 1=p'$.

Let $q'\in W_n(p')$. 
Our aim is to show that $$\fork[\beta_\tau, 1]{p^\tau}(q')\le^0_{\beta_\tau} g_\tau(r^\tau_\xi)\restriction \beta_\tau,$$ for the unique index $\xi$ such that $r_\xi=w(a\restriction 1, q')$.

By virtue of Lemma~\ref{CviIteration}\eqref{CViforking},  $B_{\fork[\beta_\tau,1]{p^\tau}(q')}=B_{p^\tau}=B_{b^*}$. Hence, it will be enough to check that
$\fork[\beta_\tau, 1]{p^\tau}(q')\restriction \epsilon\le^0_{\epsilon} g_\tau(r^\tau_\xi)\restriction \epsilon$.

For each ${\tau'}<\tau$, combining (II) of  the induction hypothesis with Clauses~\eqref{vi-like} and \eqref{C5LemmaOfForking} of
Lemma~\ref{CviIteration}, we have
$$\fork[\beta_\tau, 1]{p^\tau}(q')\restriction \gamma_{\tau'}=\fork[\gamma_{\tau'},1]{b_{\tau'}}(q')=\fork[\gamma_{\tau'},\beta_{\tau'}]{b_{\tau'}}(s_{\tau'}),$$
where $s_{\tau'}:=\fork[\beta_{\tau'},1]{b_{\tau'}\restriction \beta_{\tau'}}(q')$. Indeed, for the latter equality, see Equation~\eqref{eq2} above.

Thus, by (III) of our induction hypothesis,
$$\fork[\beta_\tau, 1]{p^\tau}(q')\restriction \gamma_{\tau'}=\fork[\gamma_{\tau'},\beta_{\tau'}]{b_{\tau'}}(s_{\tau'})\le^0_{\gamma_{\tau'}} g_{\tau'}(r^{\tau'}_\xi),$$
where $\xi$ is the unique index such that $r^{\tau'}_\xi=w(a\restriction \beta_{\tau'},s_{\tau'})$. 

Since $g_{\tau}(r^{\tau}_\xi)\restriction \gamma_{\tau'}=g_{\tau'}(r^{\tau'}_\xi)$, the above expression  actually yields $\fork[\beta_\tau, 1]{p^\tau}(q')\restriction \gamma_{\tau'}\le^0_{\gamma_\tau'}g_{\tau}(r^{\tau}_\xi)\restriction \gamma_{\tau'}$. Altogether, we have shown that $$\fork[\beta_\tau, 1]{p^\tau}(q')\restriction \epsilon \le^0_\epsilon g_\tau(r^\tau_\xi)\restriction \epsilon.$$

Finally, note that $$r_\xi=r^{\tau'}_\xi\restriction 1=w(a\restriction \beta_{\tau'},s_{\tau'})\restriction 1=w(a\restriction 1, q'),$$
where the last equality follows from Lemma~\ref{CvIteration} and  $s_{\tau'}\restriction 1=q'$.

The above shows that $(a\restriction \gamma_\tau,\vec{r}^\tau, p^\tau, g_\tau,i_{\gamma_\tau})$ fulfills the assumptions of Definition~\ref{mixingproperty} with respect the  pair $(\pitchfork_{\gamma_\tau,\beta_\tau},\pi_{\gamma_\tau,\beta_\tau})$.
Appealing to Lemma~\ref{CvIteration}
we obtain ${b_\tau}\le_{\gamma_\tau}^0{a\restriction \gamma_\tau}$ with $b_\tau\restriction \beta_\tau=p^\tau$
such that the pair $(b_\tau, p^\tau)$ witnesses (III).

Let us now show that $(b_\tau, p^\tau)$ satisfies (I) and (II). 
By (II) of the induction hypothesis and the definition of $p^\tau$, for each $\tau'<\tau$,  $$b_\tau\restriction \gamma_{\tau'}=(p^\tau\restriction \epsilon)\restriction \gamma_{\tau'}=b^*\restriction\gamma_{\tau'}=b_{\tau'}.$$
Similarly, by (I) of the induction hypothesis,  $b_\tau\restriction 1=b_{\tau'}\restriction 1=p'$.
\end{proof}

The above completes the induction and  yields the lemma.
\end{proof}

The following technical lemma yields a sufficient condition for the pair $(\mathbb{P}_\delta,\ell_\delta)$ to have property $\mathcal{D}$ in case $\delta\in\acc(\mu^++1)$.

\begin{lemma}\label{lemmapropertyDforlimitMoreOver}
Let $\delta\in\acc(\mu^++1)$,
$a\in P_\delta$, $n<\omega$ and $\vec{s}=\langle s_\xi\mid \xi<\chi\rangle$  be a good enumeration of $W_n(a)$. 
Set $l:=\ell_\delta(a)$.
Suppose that $D$ is a set of conditions in $(\z{\mathbb{P}}_\delta)_{l+n}$
which is dense in $(\mathbb{P}_\delta)_{l+n}$.
Then $\pI$ has a winning strategy for the game $\Game_{\mathbb{P}_\delta}(a,\vec{s},D)$.
\end{lemma}
\begin{proof}  

Set $p:=\pi_{\delta,1}(a)$ and $r_\xi:=\pi_{\delta,1}(s_\xi)$ for each $\xi<\chi$.  By Clauses \eqref{frk4} and \eqref{frk5} of Definition~\ref{forking}, $\vec{r}=\langle r_\xi\mid \xi<\chi\rangle$ is a good enumeration of $W_n(p)$. 

We now describe our strategy for $\pI$.
Suppose that $\xi<\chi$ and that $\langle (a_\eta,b_\eta)\mid \eta<\xi\rangle$ is an initial play of the game $\Game_{\mathbb{P}_\delta}(a,\vec{s},D)$;
we need to define $a_\xi$.

$\br$ If $\xi=0$, 
then let $p_0$ be the $0^{th}$-move of $\pI$ according to some winning strategy in $\Game_{\mathbb P_1}(p, \vec{r})$,
which is available by virtue of \blkref{I}.  Recalling Definition~\ref{DiagonalizabilityGame}, $p_0$ is compatible with $r_0$, so we may let $t_0$ be a condition in $\mathbb{P}_1$ such that $t_0\le_1 p_0, r_0$.

If $B_a$ is empty, then let  $a_0:=\fork[\delta,1]{a}(p_0)$ and $z_0:=\fork[\delta,1]{a}(t_0)$. By Lemma~\ref{CvIteration}, $z_0\le_\delta a_0, s_0$,  hence $a_0$ is a legitimate move for $\pI$.

Suppose now that $B_a$ is nonempty, and let $\langle \gamma_\tau\mid \tau\le\theta\rangle$ be the increasing enumeration of the closure of $B_a$. 
For every $\tau\in\nacc(\theta+1)$, $\gamma_\tau$ is a successor ordinal, so we let $\beta_\tau$ denote its predecessor.
By recursion on $\tau\le\theta$, we shall define a coherent sequence $\langle (a_0^\tau, z_0^\tau)\mid \tau\le\theta\rangle\in\prod_{\tau\le \theta}(P_{\gamma_\tau}\times P_{\gamma_\tau})$,\footnote{That is, for any pair $\tau'\leq \tau$, $a^\tau_0\restriction \tau'=a^{\tau'}_0$ and $z^\tau_0\restriction \tau'=z^{\tau'}_0$.}
and then we shall let $a_0:=\fork[\delta,\gamma_\theta]{a}(a^\theta_0)$ and $z_0:=\fork[\delta,\gamma_\theta]{a}(z^\theta_0)$. 

The idea is to craft the sequence $\langle a^\tau_0\mid \tau\leq \theta\rangle$  so that for all $\gamma\in B_a$,  $a_0\restriction \gamma$ satisfies \eqref{Mixing2}--\eqref{Mixing4} of Lemma~\ref{MixingforLimits}.  On the other hand,  $\langle z^\tau_0\mid \tau\leq \theta\rangle$ will provide a sequence of auxiliary conditions witnessing that $z^\tau_0\le_{\gamma_\tau} a^\tau_0, s_0\restriction\gamma_\tau$. This will ensure at the end that $a_0$ is a legitimate move for $\pI$. 

\smallskip

$\br\br$ Set $\varrho^0_0:=\dom(\tp_{\gamma_0}(a\restriction \gamma_0))+\omega+1$,
and then let 
$$a_0^0:=\fork[\gamma_0,1]{a\restriction\gamma_0}(p_0){}^{\curvearrowright\varrho^0_0},$$
$$z_0^0:=\fork[\gamma_0,1]{a\restriction\gamma_0}(t_0){}^{\curvearrowright\varrho^0_0},$$
where ${}^\curvearrowright$ is the \emph{stretch operation} provided by Definition~\ref{type}\eqref{type4}
with respect to the type $\tp_{\gamma_0}$ over the forking projection $(\pitchfork_{\gamma_0,\beta_0}, \pi_{{\gamma_0,\beta_0}})$.\footnote{\label{Footnoteontypes}Note that $\fork[\gamma_{0},1]{a\restriction\gamma_{0}}(p_0)=\fork[\gamma_{0},\beta_{0}]{a\restriction\gamma_{0}}(\fork[\beta_{0},1]{a\restriction\beta_{0}}(p_0))$ (see \eqref{PitchfrokSuccessor} on page~\pageref{PitchfrokSuccessor}).} 

Since $p_0\le^0_{1} a\restriction 1$,  $a^0_0\in P_{\gamma_0}$ and also  $a^0_0\le^0_{\gamma_0} a\restriction \gamma_0$. Similarly, $z^0_0\in P_{\gamma_0}$. 

\begin{claim}
$z^0_0\le_{\gamma_0} a^0_0, s_0\restriction \gamma_0$.
\end{claim}
\begin{proof}
Combining Clause~\eqref{type4}(a) of Definition~\ref{type} with Lemma~\ref{CvIteration},
$$z^0_0\le_{\gamma_0}^0\fork[\gamma_0,1]{a\restriction \gamma_0}(t_0)\le_{\gamma_0}^0\fork[\gamma_0,1]{a\restriction \gamma_0}(r_0)=s_0\restriction \gamma_0.$$
On the other hand, $\fork[\gamma_0,1]{a\restriction \gamma_0}(t_0)\le_{\gamma_0}\fork[\gamma_0,1]{a\restriction \gamma_0}(p_0)$ and 
$$\dom(\tp_{\gamma_0}(\fork[\gamma_0,1]{a\restriction \gamma_0}(t_0)))=\dom(\tp_{\gamma_0}(\fork[\gamma_0,1]{a\restriction \gamma_0}(p_0))),$$
where the last equality follows from Clause~\eqref{type3} of Definition~\ref{type}.

Combining this with Definition~\ref{type}\eqref{newstretch} we indeed get that $z^0_0\le_{\gamma_0} a^0_0$.
\end{proof}

\begin{claim}\label{IndeedWestretch}
For all $i\in[\dom(\tp_{\gamma_0}(a\restriction\gamma_0)),\dom(\tp_{\gamma_0}(a^0_0))]$, 
$$\tp_{\gamma_0}(a^0_0)(i)\leq\mtp_{\gamma_0}(a\restriction\gamma_0).$$
\end{claim}

\begin{proof}
Let $i$ be as above.
 By Definition~\ref{type}\eqref{type3}, $\dom(\tp_{\gamma_0}(\fork[\gamma_0,1]{a\restriction\gamma_0}(p_0))=\dom(\tp_{\gamma_0}(a\restriction\gamma_0))$. So, combining Clauses~\eqref{type2} and \eqref{type4} of Definition~\ref{type},
$$\tp_{\gamma_0}(a^0_0)(i)\leq\mtp_{\gamma_0}(\fork[\gamma_0,1]{a\restriction\gamma_0}(p_0))\leq \mtp_{\gamma_0}(a\restriction\gamma_0),$$
as desired.
\end{proof}

$\br\br$ For every $\tau<\theta$ such that both $a_0^\tau$ and $z^\tau_0$ have already been defined, set
$\varrho^{\tau+1}_0:=\dom(\tp_{\gamma_{\tau+1}}(a\restriction \gamma_{\tau+1}))+\omega+1$,
and then let 
$$a_0^{\tau+1}:=\fork[\gamma_{\tau+1},\gamma_\tau]{a\restriction\gamma_{\tau+1}}(a_0^\tau){}^{\curvearrowright\varrho^{\tau+1}_0},$$
$$z_0^{\tau+1}:=\fork[\gamma_{\tau+1},\gamma_\tau]{a\restriction\gamma_{\tau+1}}(z_0^\tau){}^{\curvearrowright\varrho^{\tau+1}_0}.$$
where ${}^\curvearrowright$ is the corresponding stretch operation of the type $\tp_{\gamma_{\tau+1}}$.\footnote{ \label{FootnoteType2}Note that  $\fork[\gamma_{\tau+1},\gamma_\tau]{a\restriction\gamma_{\tau+1}}(a_0^\tau)=\fork[\gamma_{\tau+1},\beta_{\tau+1}]{a\restriction\gamma_{\tau+1}}(\fork[\beta_{\tau+1},\gamma_{\tau}]{a\restriction\beta_{\tau+1}}(a_0^\tau))$.}

\begin{claim}
For all $\tau'\leq \tau$, $a_0^{\tau+1}\restriction \gamma_{\tau'}=a_0^{\tau'}$ and $z_0^{\tau+1}\restriction \gamma_{\tau'}=z_0^{\tau'}$.
\end{claim}
\begin{proof}
Let $\tau'\leq \tau$. By Clause~\eqref{type4}(a) of Definition~\ref{type} and Lemma~\ref{CviIteration}\eqref{vi-like}, $$a^{\tau+1}_0\restriction \beta_{\tau+1}=\fork[\gamma_{\tau+1},\gamma_\tau]{a\restriction\gamma_{\tau+1}}(a_0^\tau)\restriction \beta_{\tau+1}=\fork[\beta_{\tau+1},\gamma_\tau]{a\restriction\beta_{\tau+1}}(a_0^\tau).$$
Hence, Lemma~\ref{CviIteration}\eqref{CViforking}  yields $a^{\tau+1}_0\restriction \gamma_\tau=a^\tau_0$. Using the induction hypothesis, we get $a^{\tau+1}_0\restriction \gamma_{\tau'}=a^{\tau'}_0$. The argument for $z^{\tau+1}_0$ is the same. 
\end{proof}

\begin{claim}\label{ShowingCompatibility}
$z^{\tau+1}_0\le_{\gamma_{\tau+1}} a^{\tau+1}_0, s_0\restriction \gamma_{\tau+1}$.
\end{claim}
\begin{proof}
Recall that by the induction hypothesis, $z^\tau_0\le_{\gamma_\tau} a^\tau_0, s_0\restriction\gamma_\tau$. Thus, Clause~\eqref{type4} of Definition~\ref{type} and Lemma~\ref{CvIteration} combined yield
$$z^{\tau+1}_0\le_{\gamma_{\tau+1}}^0\fork[\gamma_{\tau+1},\gamma_\tau]{a\restriction \gamma_{\tau+1}}(z^\tau_0)\le_{\gamma_{\tau+1}}^0\fork[\gamma_{\tau+1},\gamma_\tau]{a\restriction \gamma_{\tau+1}}(s_0\restriction \gamma_\tau)=s_0\restriction \gamma_{\tau+1}.$$
Similarly, Lemma~\ref{CvIteration} yields
$$\fork[\gamma_{\tau+1},\gamma_{\tau}]{a\restriction \gamma_{\tau+1}}(z^{\tau}_0)\le_{\gamma_{\tau+1}}\fork[\gamma_{\tau+1},\gamma_\tau]{a\restriction \gamma_{\tau+1}}(a^{\tau}_0).$$
Also,  by Clause~\eqref{type3} of Definition~\ref{type} and the remark made at Footnote~\ref{FootnoteType2}
$$\dom(\tp_{\gamma_{\tau+1}}(\fork[\gamma_{\tau+1},\gamma_\tau]{a\restriction \gamma_{\tau+1}}(z^{\tau}_0)))=\dom(\tp_{\gamma_{\tau+1}}(\fork[\gamma_{\tau+1},\gamma_\tau]{a\restriction \gamma_{\tau+1}}(a^{\tau}_0)).$$
Therefore,  Definition~\ref{type}\eqref{newstretch}  yields $z^{\tau+1}_0\le_{\gamma_{\tau+1}} a^{\tau+1}_0$, as desired.
\end{proof}

Finally, the following can be proved exactly as  in Claim ~\ref{IndeedWestretch}:

\begin{claim}
For all $i\in[\dom(\tp_{\gamma_{\tau+1}}(a\restriction\gamma_{\tau+1})),\dom(\tp_{\gamma_{\tau+1}}(a^{\tau+1}_0))]$, 
$$\tp_{\gamma_{\tau+1}}(a^{\tau+1}_0)(i)\leq\mtp_{\gamma_{\tau+1}}(a\restriction\gamma_{\tau+1}).$$
\end{claim}

$\br\br$ For every $\tau\in\acc(\theta+1)$, let $a_0^\tau:=\bigcup_{\tau'<\tau}a_0^{\tau'}$ and $z_0^\tau:=\bigcup_{\tau'<\tau}z_0^{\tau'}$. 
By the induction hypothesis,  $\langle (a_0^{\tau'},z_0^{\tau'})\mid \tau'\leq \tau\rangle$ is coherent. Clearly,  $z^\tau_0\le_{\gamma_\tau} a_0^\tau$ and 
 arguing as in Claim~\ref{ShowingCompatibility}, we have $z^\tau_0\le_{\gamma_\tau}^0 s_0\restriction \gamma_\tau$.
\medskip

At the end of the recursion, we define $a_0$ and $z_0$ as mentioned before. Note that by our construction $z_0$ witnesses that $a_0$ is a legitimate move for $\pI$, so, in response, $\pII$ plays a condition $b_0$ in $D$
extending $a_0$ and satisfying $b_0\le_\delta^0 s_0$. 
By Clause~\eqref{type6} of Definition~\ref{type}, noting that for all $\gamma\in B_{a}$, $\dom(\tp_\gamma(a_0\restriction\gamma))\neq 0$,
we infer that $B_a\s B_{a_0}$. Altogether, $B_a\s B_{a_0}\s B_{b_0}$. Now, for every $\gamma\in B_a$, 
$$\dom(\tp_\gamma(a\restriction\gamma))+1<\dom(\tp_\gamma(a_0\restriction\gamma)).$$
Also, for all $i\in[\dom(\tp_\gamma(a\restriction\gamma)), \dom(\tp_\gamma(a_0\restriction\gamma))]$, 
$$\tp_\gamma(a_0\restriction\gamma)(i)\leq\mtp_\gamma(a\restriction\gamma).$$

$\br$ Suppose that $0<\xi<\chi$. Recall that $\langle (a_\eta,b_\eta)\mid \eta<\xi\rangle$ is an initial play of the game and that we want to define $a_\xi$. To that effect, let $p_\xi$ the $\xi^{th}$-move of $\pI$ in the game $\Game_{\mathbb P_1}(p, \vec{r})$, provided   $\langle (a_\eta\restriction 1, b_\eta\restriction 1)\mid \eta<\xi\rangle$ gathers the previous ones.  Let $t_\xi$ be such that $t_\xi\le_1 p_\xi, s_\xi$ and set $B_\xi:=\bigcup_{\eta<\xi} B_{b_{\eta}}$. 

\smallskip

If $B_\xi$ is empty then again set $a_\xi:=\fork[\delta,1]{a}(p_\xi)$ and $z_\xi:=\fork[\delta,1]{a}(t_\xi)$ and argue as in the case $\xi=0$. 
Otherwise,  $B_\xi$ is nonempty and we let $\langle \gamma_\tau\mid \tau\le\theta\rangle$ be the increasing enumeration of the closure of $B_\xi$. 
By recursion on $\tau\le\theta$, we define a  coherent sequence $\langle (a_\xi^\tau, z_\xi^\tau)\mid \tau\le\theta\rangle\in\prod_{\tau\le \theta}(P_{\gamma_\tau}\times P_{\gamma_\tau})$,
and then we shall let $a_\xi:=\fork[\delta,\gamma_\theta]{a}(a^\theta_\xi)$ and $z_\xi:=\fork[\delta,\gamma_\theta]{a}(z^\theta_\xi)$. The construction and the subsequent verifications are the same  as in the case $\xi=0$, so we skip them. The only difference now is that, for each $\tau\in \nacc(\theta+1)$, we set $\varrho^\tau_\xi:=(\sup_{\eta<\xi}\dom(\tp_{\gamma_\tau}(b_\eta\restriction \gamma_\tau)))+\omega+1.$

Thereby,  we get a condition $a_\xi$ which is a legitimate move for $\pI$ and, in response, $\pII$ plays a condition $b_\xi$ in $D$
extending $a_\xi$ and satisfying $b_\xi\le_\delta^0 s_\xi$. Once again, $a_\xi\restriction 1=p_\xi$,  $B_{\xi}\s B_{a_\xi}\s B_{b_\xi}$ and  for all $\gamma\in B_\xi$,  
\begin{equation*}\label{propertyDequation}
\tag{$\dagger \dagger$}(\sup_{\eta<\xi}\dom(\tp_\gamma(b_\eta\restriction\gamma)))+1<\dom(\tp_\gamma(a_\xi\restriction\gamma)).
\end{equation*} 
Also, for all $i\in[\dom(\tp_\gamma(a\restriction\gamma)), \sup_{\eta<\xi}\dom(\tp_\gamma(b_\eta\restriction\gamma))]$, 
\begin{equation*}\label{propertyDequation2}
\tag{$\dagger \dagger\dagger$}\tp_\gamma(a_\xi\restriction\gamma)(i)\leq\mtp_\gamma(a\restriction\gamma).\footnote{For details about the verification of \eqref{propertyDequation2}, see Claim~\ref{IndeedWestretch}. }
\end{equation*}

At the end we obtain a sequence $\langle(a_\xi,b_\xi)\mid \xi<\chi\rangle$ which is a play in the game $\Game_{\mathbb{P}_\delta}(a,\vec{s},D)$. By construction, for each $\xi<\chi$,  $a_\xi\restriction 1=p_\xi$, so that $\langle b_\xi\restriction 1\mid \xi<\chi\rangle$ is diagonalizable with respect to $\vec{r}$. Let $p'\le^0_1\pi_{\delta,1}(a)$ be a witness for this latter fact. 

Our next task is to show that $\langle b_\xi\mid \xi<\chi\rangle$ is diagonalizable and that the corresponding witness $b$ fulfills the requirements of the lemma. 

\begin{claim}\label{ClaimPrepropertyD}
The tuple $(a, \vec{r}, p', g, B_\chi)$ meets  the requirements of Lemma~\ref{MixingforLimits}, where $g: W_n(\pi_{\delta,1}(a))\rightarrow\mathbb P_\delta\downarrow a$ is  defined via
$g(r_\xi):=b_\xi$. 
\end{claim}
\begin{proof}
Let us go over the clauses of Lemma~\ref{MixingforLimits}: Clause~(0) holds by  the construction of $\langle B_{b_\xi}\mid \xi<\chi\rangle$,  Clause~\eqref{Mixing1} is obvious and Clause~\eqref{Mixing2} follows from the discussion of the previous paragraph. So, let us address the rest.

For each $\gamma\in B_\chi$,  denote $\iota_\gamma:=\min\{\xi<\chi\mid \gamma\in B_{b_\xi}\}$.

Clause~\eqref{Mixing3}:  Let $\gamma\in B_\chi$ and $\xi<\chi$: 
\begin{itemize}
\item If $\xi<\iota_\gamma$ then $\gamma\notin B_{b_\xi}$ and so $b_\xi\restriction \gamma=\myceil{b_\xi\restriction \beta}{\mathbb{P}_\gamma}$, where $\gamma=\beta+1$. Thus, Lemma~\ref{CvIteration} and Definition~\ref{type}\eqref{type6} yield $\dom(\tp_\gamma(b_\xi\restriction \gamma))=0$.
\item If $\xi=\iota_\gamma$, then $\gamma\in B_{b_\xi}$ and so $b_\xi\restriction \gamma\neq \myceil{b_\xi\restriction \beta}{\mathbb{P}_\gamma}$, where $\gamma=\beta+1$. Again, Lemma~\ref{CvIteration} and Definition~\ref{type}\eqref{type6} yield $\dom(\tp_\gamma(b_\xi\restriction \gamma))\geq 1$.

\item If $\xi>\iota_\gamma$, then $\gamma\in B_{b_{\iota_\gamma}}\s B_\xi$.\footnote{Recall that $B_\xi=\bigcup_{\eta<\xi}B_{b_\eta}$.} Combining \eqref{propertyDequation} above with $b_\xi\le_\delta a_\xi$ and Clause~\eqref{type2} of Definition~\ref{type} we get $$(\sup_{\eta<\xi}\dom(\tp_{\gamma}(b_\eta\restriction \gamma)))+1<\dom(\tp_\gamma(b_\xi\restriction \gamma)).$$
\end{itemize}

Clause~\eqref{Mixing4}:  Let $\gamma\in B_\chi$,  $\iota_\gamma<\xi<\chi$ and $i$ be as in Clause~\eqref{Mixing4} of Lemma~\ref{MixingforLimits}. By definition, $\gamma\in B_{b_{\iota_\gamma}}\s B_\xi$, hence \eqref{propertyDequation2} yields 
$\tp_\gamma(a_\xi\restriction\gamma)(i)\leq \mtp_\gamma(a\restriction\gamma)$. Combining this with Definition~\ref{type}\eqref{type3} and $b_\xi\le_\delta a_\xi$ we arrive at 
$$\tp_\gamma(b_\xi\restriction\gamma)(i)\leq \tp_\gamma(a_\xi\restriction\gamma)(i)\leq \mtp_\gamma(a\restriction\gamma).$$

Clause~\eqref{Mixing5}: Let $\gamma\in B$. For all $\xi$ such that $\iota_\gamma\leq \xi<\chi$, then $\gamma\in B_{b_\xi}$. Since for all such $\xi$'s, $b_\xi$ is a condition in $D\s (\z{\mathbb{P}}_\delta)_{l+n}$ then $\mtp_\gamma(b_\xi\restriction\gamma)=0$ (see Definition~\ref{DefRingForLimits}). Thus, clearly, $\sup_{\iota_\gamma\leq \xi<\chi} \mtp_\gamma(b_\xi\restriction\gamma)<\omega$.
\end{proof}
Combining Claim~\ref{ClaimPrepropertyD} with Lemma~\ref{MixingforLimits} we get a condition $b$ witnessing Clauses~(a)--(c) of the latter. Note that thanks to (a) and (c) we can appeal to Proposition~\ref{TheWitnessOfMixingYieldsDiagona} with  $(\pitchfork_{\delta,1}, \pi_{\delta,1})$, $a$, $\vec{s}$, $\langle b_\xi\mid \xi<\chi\rangle$, $p'$ and $b$ and conclude that $b$ diagonalizes $\langle b_\xi\mid \xi<\chi\rangle$ with respect to $\vec{s}$. 
\end{proof}

\begin{cor}\label{lemmapropertyDforlimit}
For every $\delta\in\acc(\mu^++1)$,
if $(\z{\mathbb{P}}_\delta)_n$ forms a dense subposet of $(\mathbb{P}_\delta)_n$ for every $n<\omega$,
then $(\mathbb{P}_\delta,\ell_\delta)$ has property $\mathcal{D}$.
\end{cor}
\begin{proof} By Lemmas \ref{lemmapropertyDforlimitMoreOver} and \ref{propertyDatdense}.
\end{proof}

The next lemma will be useful in the proof of Lemma~\ref{CivIteration}.

\begin{lemma}\label{Morethanclosure}
Let $\delta\in[2,\mu^+]$. Then, 
for every $n<\omega$ and every directed set $D$ of conditions in $(\z{\mathbb{P}}_\delta)_n$ of size $<\kappa_n$, there is $q\in (\z{P}_\delta)_n$ such that $q$ is a lower bound of $D$ with $B_q=\bigcup_{p\in D} B_p$.
\end{lemma}
\begin{proof}
We argue by induction on $\delta$. The base case $\delta=2$ can be proved similarly to the successor case below. So, we assume by induction  that the statement  holds for all $\gamma\in\delta\setminus2$ and prove it for $\delta$.

Fix an arbitrary directed family $D\s (\z{P}_\delta)_n$ of size $<\kappa_n$. 

\smallskip

$\br$ Suppose that $\delta=\gamma+1$. Then $\bar{D}:=\{\pi_{\delta,\gamma}(p)\mid p\in D\}$ is a directed subset of $(\z{P}_\gamma)_n$ of size $<\kappa_n$, so that the inductive assumption yields a lower bound  $p'\in (\z{P}_\gamma)_n$ for $\bar{D}$ such that $B_{p'}:=\bigcup_{p\in D} B_{\pi_{\delta,\gamma}(p)}$.  Set $\hat{D}:=\{\fork[\delta,\gamma]{p}(p')\mid p\in D\}$,
and note that $|\hat{D}|\leq |D|<\kappa_n$.
By Lemma~\ref{CvIteration}, $({\pitchfork_{\delta,\gamma}},{\pi_{\delta,\gamma}})$ is a forking projection from $(\mathbb{P}_\delta,\ell_\delta)$ to $(\mathbb{P}_\gamma,\ell_\gamma)$.
So, Definition~\ref{forking}\eqref{frk7} together with Remark~\ref{RemarkType} imply that $\hat{D}$ is a directed subset of $(\z{P}_\delta)^{\pi_{\delta,\gamma}}_n$. 

Recalling that $(\z{P}_\delta)^{\pi_{\delta,\gamma}}_n$ is isomorphic to the $\kappa_n$-directed-closed  poset $\z{\mathbb{A}}^\pi_n$ given by \blkref{II}, we may pick a lower bound $q\in(\z{P}_\delta)^{\pi_{\delta,\gamma}}_n$ for $\hat{D}$ such that $\pi_{\delta,\gamma}(q)=p'$. It is clear that $q$ is the desired lower bound. 

\smallskip

$\br$ Suppose that $\delta$ is limit. Let $C:=\cl(\bigcup_{p\in D}B_p)\cup\{1,\delta\}$.
We shall define a sequence $\langle p_\gamma\mid \gamma\in C\rangle\in\prod_{\gamma\in C}(\z{P}_\gamma)_n$
such that, for all $\gamma\in C$, $p_\gamma$ is a lower bound for $\{ \pi_{\delta,\gamma}(p)\mid p\in D\}$ with $B_{p_\gamma}=\bigcup_{p\in D}B_{\pi_{\delta,\gamma}(p)}$. 
The sequence will be $\subseteq$-increasing in the sense that $p_{\gamma'}\restriction\gamma=p_\gamma$ for any pair $\gamma<\gamma'$ of elements of $C$.
Note that for each $\gamma\in C$, Lemma~\ref{ringcoheres} yields $\{ \pi_{\delta,\gamma}(p)\mid p\in D\}\s(\z{P}_\gamma)_n$.
We define the sequence $\langle p_\gamma\mid \gamma\in C\rangle$  by recursion on $\gamma\in C$:

\begin{itemize}
\item For $\gamma=1$, $\{ \pi_{\delta,1}(p)\mid p\in D\}$ is a directed subset of $(\z{P}_1)_n$ of size $<\kappa_n$.
By \blkref{I}, $(\mathbb P_1,\lh_1,c_1)$ is $\Sigma$-Prikry,
and hence we may find a lower bound $p_1\in (\z{P}_1)_n$ for the set under consideration.

\item Suppose $\gamma>1$ is a non-accumulation point of $C\cap\delta$. Set $\gamma:=\beta+1$ and $\alpha:=\sup(C\cap\gamma)$. Clearly, $\alpha\leq \beta$, so that Lemma~\ref{CviIteration}\eqref{CViforking} yields $$\fork[\beta,\alpha]{\pi_{\delta,\beta}(p)}(p_\alpha)=p_\alpha\ast \emptyset_{\beta},$$
for each $p\in D$. Set $q:=p_\alpha\ast \emptyset_{\beta}$ and note that the induction hypothesis on $p_\alpha$ yields $B_q=\bigcup_{p\in D}B_{\pi_{\delta,\beta}(p)}$. Set $$\bar{D}:=\{\fork[\gamma,\beta]{\pi_{\delta,\gamma}(p)}(q)\mid p\in D\}.$$

Let $p\in D$. Then  $\pi_{\delta,\gamma}(p)\in (\z{P}_\gamma)_n$ and by  Lemma~\ref{LiftingAndRings}, $q\in (\z{P}_\beta)_n$. Also, by Lemma~\ref{CvIteration},  $\tp_\gamma$ is a type over $(\pitchfork_{\gamma,\beta},\pi_{\gamma,\beta})$, hence  Remark \ref{RemarkType}  yields  $\fork[\gamma,\beta]{\pi_{\delta,\gamma}(p)}(q)\in (\z{P}_\gamma)_n$. Altogether, $\bar{D}\s  (\z{P}_\gamma)_n$. 

Since $\bar{D}$ is a directed subset of $(\z{\mathbb{P}}_\gamma)_n$ of size $<\kappa_n$, arguing as in the successor case above  we  find $p_\gamma\in (\z{P}_\gamma)_n$  a lower bound for $\bar{D}$ with $\pi_{\gamma,\beta}(p_\gamma)=q$.  
Let us point out that $B_{p_\gamma}=B_q\cup\{\gamma\}$, and thus $B_{p_\gamma}=\bigcup_{p\in D} B_{\pi_{\delta,\gamma}(p)}$

Let $r\in \bar{D}$. Since $p_\gamma\leq_\gamma r$,  $B_{p_\gamma}\supseteq B_r$ and by Lemma~\ref{CviIteration}\eqref{SupportForking}, $B_r=B_{\pi_{\delta,\gamma}(p)}\cup B_q$, so since $\gamma\in C$, 
$$B_{p_\gamma}\supseteq B_r=B_{\pi_{\delta,\beta}(p)}\cup \{\gamma\}\cup B_q=B_q\cup\{\gamma\}.$$ 
On the other hand, $B_{p_\gamma}\s B_q\cup\{\gamma\}$ since $\pi_{\gamma,\beta}(p_\gamma)=q$. Altogether, $B_{p_\gamma}=B_{q}\cup\{\gamma\}$.  
In addition, $p_\gamma$ is  a lower bound for $\{\pi_{\delta,\gamma}(p)\mid p\in D\}$. 
Finally, $\pi_{\gamma,\alpha}(p_\gamma)=\pi_{\beta, \alpha}(q)= p_\alpha$, 
and so the sequence  $\langle p_{\bar{\gamma}}\mid \bar{\gamma}\leq \gamma\rangle$ is $\s$-increasing. 

\item Suppose $\gamma\in\acc(C)$. Define $p_\gamma:=\bigcup_{\beta \in (C\cap\gamma)} p_\beta$.
By regularity of $\mu$, we have $|B_{p_\gamma}|<\mu$, so that $p_\gamma\in P_\gamma$. Also, by the induction hypothesis, $B_{p_\gamma}=\bigcup_{p\in D} B_{\pi_{\delta,\gamma}(p)}$. 

 For all $p\in D$ and all $\beta\in C\cap\gamma$, we have $\pi_{\delta,\beta}(p_\gamma)=p_\beta\le_\beta \pi_{\delta,\beta}(p)$, hence $p_\gamma$ is a bound for $\{ \pi_{\delta,\gamma}(p)\mid p\in D\}$ in $(P_\gamma)_n$. 
 
 We claim that $p_\gamma\in (\z{P}_\gamma)_n$:  Let  $\alpha\in B_{p_\gamma}$ and $\beta\in C\cap \gamma$ be such that $\alpha\in B_{p_\beta}$.  By the induction hypothesis $p_\beta\in (\z{P}_\beta)_n$, hence  Lemma~\ref{ringcoheres} yields $\pi_{\gamma,\alpha}(p_\gamma)=\pi_{\beta,\alpha}(p_\beta)\in (\z{P}_\alpha)_n$. 
Also, by similar reasons, $\pi_{\gamma,1}(p_\gamma)=\pi_{\beta,1}(p_\beta)\in(\z{P}_1)_n$. Altogether, $p_\gamma\in (\z{P}_\gamma)_n$ and clearly, $\langle p_{\bar{\gamma}}\mid \bar{\gamma}\leq \gamma\rangle$ is $\s$-increasing.

\item Suppose $\gamma=\delta$, but $\gamma\not\in\acc(C)$. In this case, let $\bar\gamma:=\sup(C\cap\gamma)$, and then set $p_\gamma:=p_{\bar\gamma}*\emptyset_\gamma$.
As the interval $(\bar\gamma,\gamma]$ is disjoint from $\bigcup_{p\in D}B_p$, for every $p\in D$,
$$p_\gamma=p_{\bar\gamma}\ast \emptyset_\gamma\le_\gamma \pi_{\gamma,\bar\gamma}(p)\ast \emptyset_\gamma=p.$$
Also, by the induction hypothesis, $p_{\bar{\gamma}}\in (\z{P}_{\bar{\gamma}})_n$ and $B_{p_\gamma}=B_{p_{\bar{\gamma}}}=\bigcup_{p\in D} B_{p}$.  Finally, Lemma~\ref{ringcoheres}  yields $p_\gamma\in (\z{P}_\gamma)_n$. Note also that with this choice $\langle p_{\bar{\gamma}}\mid \bar{\gamma}\leq \gamma\rangle$ is $\s$-increasing.
\end{itemize}

Clearly, $p_\delta$ is a lower bound for $D$ in $(\z{\mathbb{P}}_\delta)_n$ with the desired property.
\end{proof}

We are now ready to address Clause~(iv) of Goal~\ref{goals}.

\begin{lemma}\label{CivIteration} 
For all nonzero $\delta\le\mu^+$, $(\mathbb P_\delta,\lh_\delta,c_\delta)$ satisfies all the requirements to be a $\Sigma$-Prikry triple, with the possible exceptions of Clause~\eqref{c6} and the density requirement in Clause~\eqref{c2}.
Additionally, $\emptyset_\delta$ is the greatest condition in $\mathbb{P}_\delta$, $\lh_\delta=\lh_1\circ\pi_{\delta,1}$, and $\emptyset_\delta\forces_{\mathbb{P}_\delta} \check{\mu}=\kappa^+$.

Under the additional hypothesis that for each $\delta\in\acc(\mu^++1)$ and every $n<\omega$, $(\z{\mathbb{P}}_\delta)_n$ is a dense subposet of $(\mathbb{P}_\delta)_n$, we have that for all nonzero $\delta\leq \mu^+$, $(\mathbb P_\delta,\lh_\delta,c_\delta)$ is $\Sigma$-Prikry triple having property $\mathcal{D}$.
\end{lemma}
\begin{proof}
We argue by induction on $\delta$. The base case $\delta=1$ follows from the fact that $\mathbb P_1$ is isomorphic to $\mathbb Q$ given by \blkref{I}.
The successor step $\delta=\delta'+1$ follows from the fact that $\mathbb P_{\delta'+1}$ was obtained by invoking \blkref{II}.

Next, suppose that $\delta\in\acc(\mu^++1)$ is such that the conclusion of the lemma holds below $\delta$.
In particular, the hypotheses of Lemma~\ref{CvIteration} are satisfied,
so that, for all nonzero $\beta\le\gamma\le\delta$,
$({\pitchfork_{\gamma,\beta}},{\pi_{\gamma,\beta}})$ is a forking projection from $(\mathbb{P}_\gamma, \lh_\gamma)$ to $(\mathbb{P}_\beta,\lh_\beta)$. 
We now go over the clauses of Definition~\ref{SigmaPrikry}:

\medskip

(1) The first bullet of Definition~\ref{gradedposet} follows from the fact that $\lh_\delta=\lh_1\circ\pi_{\delta,1}$.
Next, let $p\in P_\delta$ be arbitrary.
Denote $\bar p:=\pi_{\delta,1}(p)$. Since $(\mathbb P_1,\lh_1,c_1)$ is $\Sigma$-Prikry, we may pick $p'\le_1\bar p$ with $\lh_1(p')=\lh_1(\bar p)+1$.
As $({\pitchfork_{\delta,1}},{\pi_{\delta,1}})$ is a forking projection from $(\mathbb{P}_\delta, \lh_\delta)$ to $(\mathbb{P}_1,\lh_1)$,
Fact~\ref{forkingfacts}(2) implies that $\fork[\delta,1]{p}(p')$ is an element of $(P_\delta)^p_1$.

(2) Let $n<\omega$. By Lemma~\ref{Morethanclosure}, the poset $(\z{\mathbb{P}}_\delta)_n$ is $\kappa_n$-directed closed. Moreover, under the extra assumption that $(\z{\mathbb{P}}_\delta)_n$ is a dense subposet of $(\mathbb{P}_\delta)_n$ we have that $(\z{\mathbb{P}}_\delta)_n$ witnesses the statement of Clause~\eqref{c2}.

\medskip

The next claim takes care of Clause~(3)

\begin{claim}\label{finalchaincondition} Suppose $p,p'\in P_{\delta}$ with $c_{\delta}(p)=c_{\delta}(p')$. Then, $(P_{\delta})_0^p\cap (P_{\delta})_0^{p'}$ is nonempty.
\end{claim}
\begin{proof} If $\delta<\mu^+$, then since $({\pitchfork_{\delta,1}},{\pi_{\delta,1}})$ is a forking projection from $(\mathbb{P}_\delta, \lh_\delta,c_\delta)$ to $(\mathbb{P}_1,\lh_1,c_1)$,
we get from Clause~\eqref{frk2} of Definition~\ref{forking} that $c_1(p\restriction1)=c_1(p'\restriction1)$,
and then by Clause~\eqref{c1} of Definition~\ref{SigmaPrikry},
we may pick $r\in (P_1)_0^{p\restriction1}\cap (P_1)_0^{p'\restriction1}$.
Consequently, Clause~\eqref{frk2} of Definition~\ref{forking} entails $\fork[\delta,1]{p}(r)=\fork[\delta,1]{p'}(r)$.
Finally, Fact~\ref{forkingfacts}(2) implies that $\fork[\delta,1]{p}(r)$ is in $(P_\delta)_0^{p}$
and that $\fork[\delta,1]{p'}(r)$ is in $(P_\delta)_0^{p'}$. In particular, $(P_\delta)_0^{p}\cap (P_\delta)_0^{p'}$ is nonempty.

From now on, assume $\delta=\mu^+$.
In particular, for all nonzero $\beta<\gamma<\mu^+$,
$(\mathbb{P}_\gamma, \lh_\gamma,c_\gamma)$ is a $\Sigma$-Prikry triple
admitting a forking projection  to $(\mathbb{P}_{\beta}, \lh_{\beta},c_{\beta})$ as witnessed by $({\pitchfork_{\gamma,\beta}},{\pi_{\gamma,\beta}})$.
To avoid trivialities, assume also that $|\{\one_{\mu^+},p,p'\}|=3$.
For each $q\in\{p,p'\}$, let $C_q:=\cl(B_q)$ and define a function $e_q:C_q\rightarrow H_\mu$ via
$$e_q(\gamma):=(\phi_\gamma[C_q\cap\gamma],c_{\gamma}(q\restriction\gamma)).$$

Write $i$ for the common value of $c_{\mu^+}(p)$ and $c_{\mu^+}(p')$.
It follows that, for every $\gamma\in C_p\cap C_{p'}$, $e_p(\gamma)=e^i(\gamma)=e_{p'}(\gamma)$,
so that $\phi_\gamma[C_p\cap\gamma]=\phi_\gamma[C_{p'}\cap\gamma]$ and hence $C_p\cap\gamma=C_{p'}\cap\gamma$.
Consequently, $R:=C_p\cap C_{p'}$ is an initial segment of $C_p$ and an initial segment of $C_{p'}$.

Let $\zeta:=\max(C_p\cup C_{p'})$, so that $p=(p\restriction\zeta)*\emptyset_{\mu^+}$ and $p'=(p'\restriction\zeta)*\emptyset_{\mu^+}$.
Set $\gamma_0:=\max(\{0\}\cup R)$.
By the above analysis, $C_p\cap(\gamma_0,\zeta]$ and $C_{p'}\cap(\gamma_0,\zeta]$ are two disjoint closed sets.

If $\gamma_0=\zeta$, then $e_p(\zeta)=e_{p'}(\zeta)$,
so that $c_{\zeta}(p\restriction\zeta)=c_{\zeta}(p'\restriction\zeta)$,
and hence $(P_{\zeta})_0^{p\restriction \zeta}\cap (P_{\zeta})_0^{p'\restriction \zeta}$ is nonempty.
Pick $r$ in that intersection. Then $r*\emptyset_{\mu^+}$ is an element of $(P_{\mu^+})_0^p\cap (P_{\mu^+})_0^{p'}$.

Next, suppose that $\gamma_0<\zeta$.
Consequently, there exists a finite increasing sequence $\langle \gamma_{j+1}\mid j\le k\rangle$ of ordinals from $C_p\cup C_{p'}$ such that
$\gamma_{k+1}=\zeta$ and, for all $j\le \kappa$:
\begin{itemize}
\item[(i)] if $\gamma_{j+1}\in C_p$, then $(\gamma_j,\gamma_{j+1}]\cap(C_p\cup C_{p'})\s C_p$;
\item[(ii)] if $\gamma_{j+1}\notin C_p$, then $(\gamma_j,\gamma_{j+1}]\cap(C_p\cup C_{p'})\s C_{p'}$.
\end{itemize}

We now define a sequence $\langle r_j\mid j\le k+1\rangle$ in $\prod_{j=0}^{k+1}\left((P_{\gamma_j})_0^{p\restriction \gamma_j}\cap (P_{\gamma_j})_0^{p'\restriction\gamma_j}\right)$, as follows.

\begin{itemize}
\item  For $j=0$, if $\gamma_0\in C_p\cap C_{p'}$, then $e_p(\gamma_0)=e_{p'}(\gamma_0)$,
so that $c_{\gamma_0}(p\restriction\gamma_0)=c_{\gamma_0}(p'\restriction\gamma_0)$,
and we may indeed pick $r_0\in (P_{\gamma_0})_0^{p\restriction \gamma_0}\cap (P_{\gamma_0})_0^{p'\restriction \gamma_0}$.
If $\gamma_0\notin C_p\cap C_{p'}$, then $\gamma_0=0$, and we simply let $r_0:=\emptyset$.

\item Suppose that $j<k+1$, where $r_j$ has already been defined.
Let $q:=\fork[\gamma_{j+1},\gamma_j]{p\restriction\gamma_{j+1}}(r_j)$ and $q':=\fork[\gamma_{j+1},\gamma_j]{p'\restriction\gamma_{j+1}}(r_j)$.
By Lemma~\ref{CviIteration}\eqref{SupportForking}, $B_q=(B_p\cap\gamma_{j+1})\cup B_{r_j}$ and $B_{q'}=(B_{p'}\cap\gamma_{j+1})\cup B_{r_j}$.
In particular, if $\gamma_{j+1}\in C_p$, then $(\gamma_j,\gamma_{j+1}]\cap(B_{q}\cup B_{q'})\s B_q$,
so that $q'=r_j*\emptyset_{\gamma_{j+1}}$ and $q\le_{\gamma_{j+1}} q'$ by Clauses \eqref{CViforking} and \eqref{Cviiforking} of Lemma~\ref{CviIteration}, respectively.
Likewise, if $\gamma_{j+1}\notin C_p$, then $q=r_j*\emptyset_{\gamma_{j+1}}$, so that $q'\le_{\gamma_{j+1}}q$.
Thus, $\{q,q'\}\cap (P_{\gamma_j})_0^{p\restriction \gamma_j}\cap (P_{\gamma_j})_0^{p'\restriction\gamma_j}$ is nonempty,
and we may let $r_{j+1}$ be an element of that set.
\end{itemize}

Evidently, $r_{k+1}*\emptyset_{\mu^+}$ is an element of $(P_{\mu^+})_0^p\cap (P_{\mu^+})_0^{p'}$.
\end{proof}
\begin{enumerate}
  \setcounter{enumi}{3}
\item Let $p\in P_\delta$, $n,m<\omega$ and $q\in (P_\delta^p)_{n+m}$ be arbitrary.
Recalling that $({\pitchfork_{\delta,1}},{\pi_{\delta,1}})$ is a forking projection from $(\mathbb{P}_\delta, \lh_\delta)$ to $(\mathbb{P}_1,\lh_1)$,
we infer from Clause~\eqref{frk4} of Definition~\ref{forking} that
$\fork[\delta,1]{p}(m(p\restriction 1, q\restriction 1))$ is the greatest element of
$\{r\le^n_\delta p\mid q\le^m_\delta r\}$.

\item Recalling that $(\mathbb P_1,\lh_1,c_1)$ is $\Sigma$-Prikry,
and that $({\pitchfork_{\delta,1}},{\pi_{\delta,1}})$ is a forking projection from $(\mathbb{P}_\delta, \lh_\delta)$ to $(\mathbb{P}_1,\lh_1)$,
we infer from Fact~\ref{forkingfacts}(1) that, for every $p\in P_\delta$, $|W(p)|=|W(p\restriction1)|<\mu$.

\item Let $p',p\in P_\delta$ with $p'\le_\delta p$.
Let $q\in W(p')$ be arbitrary. For all $\gamma<\delta$,
the pair $({\pitchfork_{\delta,\gamma}},{\pi_{\delta,\gamma}})$ is a forking projection from $(\mathbb{P}_\delta, \lh_\delta)$ to $(\mathbb{P}_\gamma,\lh_\gamma)$,
so that by the special case $m=0$ of Clause~\eqref{frk4} of Definition~\ref{forking},
$$w(p,q)=\fork[\delta,\gamma]{p}(w(p\restriction \gamma, q\restriction \gamma)).$$
Now, for all $q'\le_\delta q$, the induction hypothesis implies that, for all $\gamma<\delta$,
$w(p\restriction \gamma, q'\restriction \gamma)\le_\gamma w(p\restriction \gamma, q\restriction \gamma)$.
Together with Clause~\eqref{frk5} of Definition~\ref{forking}, it follows that, for all $\gamma<\delta$,
$$w(p,q')\restriction\gamma=w(p\restriction \gamma, q'\restriction \gamma)\le_\gamma w(p\restriction \gamma, q\restriction \gamma)=w(p,q)\restriction\gamma.$$
So, by the definition of $\le_\delta$, $w(p,q')\le_\delta w(p,q)$, as desired.

\item By our assumptions, $(\pitchfork_{\delta,1},\pi_{\delta,1})$ is a forking projection from $(\mathbb{P}_\delta,\ell_\delta)$ to $(\mathbb{P}_1,\ell_1)$ and  $(\mathbb{P}_1,\ell_1,c_1)$ is $\Sigma$-Prikry. Moreover, under the extra assumption that for each $n<\omega$, $(\z{\mathbb{P}}_\delta)_n$ is a dense subposet of $(\mathbb{P}_\delta)_n$, Corollary~\ref{lemmapropertyDforlimit} yields property $\mathcal{D}$ for $(\mathbb{P}_\delta,\ell_\delta)$.
It thus follows from Lemma~\ref{propertyDyieldsCPP} that $(\mathbb{P}_\delta,\ell_\delta)$ has the $\CPP$.
\end{enumerate}

To complete our proof we shall need the following claim.
\begin{claim}\label{CorMuIteration}
For each $\delta$ with $1\leq \delta\leq \mu^+$, $\one_{\mathbb P_{\delta}}\forces_{\mathbb{P}_{\delta}}\check{\mu}=\kappa^+$.
\end{claim}
\begin{proof}
The case $\delta=1$ is given by \blkref{I}. Towards a contradiction, suppose that $1<\delta\leq \mu^+$ and that $\one_{\mathbb P_{\delta}}\not\forces_{\mathbb P_{\delta}}\check{\mu}=\kappa^+$.
As $\one_{\mathbb P_1}\forces_{\mathbb P_1}\check{\mu}=\kappa^+$ and $\mathbb P_{\delta}$ projects to $\mathbb P_1$,
this means that there exists $p\in P_{\delta}$ such that $p\forces_{\mathbb P_{\delta}}|\mu|\le|\kappa|$.
Since $\mathbb{P}_1$ is isomorphic to the poset $\mathbb Q$ of \blkref{I}, and since $\one_\mathbb{Q}\forces_{\mathbb{Q}}``\kappa\text{ is singular}"$,\footnote{\label{sole}This is the sole part of the whole proof to make use of the fact that the poset given by \blkref{I} forces $\kappa$ to be singular.}
$\one_{\mathbb P_1}\forces_{\mathbb P_1}``\kappa\text{ is singular}"$.
As $\mathbb P_{\delta}$ projects to $\mathbb P_1$,
in fact $p\forces_{\mathbb P_{\delta}}\cf(\mu)<\kappa$.
Thus, Lemma~\ref{l14}(2) yields a condition $p'\le_{\delta} p$ with $|W(p')|\ge\mu$, contradicting Clause~(5) above.
\end{proof}
This completes the proof of Lemma~\ref{CivIteration}.
\end{proof}

\section{An application}\label{ReflectionAfterIteration}
In this section, we present the first application of our iteration scheme.
We will be constructing a model of finite simultaneous reflection at a successor of a singular strong limit cardinal $\kappa$ in the presence of $\neg\sch_{\kappa}$.

\begin{definition}
For cardinals $\theta<\mu=\cf(\mu)$ and stationary subsets $S,\Gamma$ of $\mu$,
$\refl({<}\theta,S,\Gamma)$ stands for the following assertion.
For every collection $\mathcal S$ of stationary subsets of $S$,
with $|\mathcal S|<\theta$ and $\sup(\{\cf(\alpha)\mid \alpha\in\bigcup\mathcal S\})<\mu$,
there exists $\gamma\in\Gamma\cap E^\mu_{>\omega}$ such that, for every $S\in\mathcal S$, $S\cap\gamma$ is stationary in $\gamma$.

We write $\refl({<}\theta,S)$ for $\refl({<}\theta,S,\mu)$.
\end{definition}

A proof of the following folklore fact may be found in \cite[\S5]{partI}.
\begin{fact}\label{factPartI} If $\kappa$ is a singular strong limit cardinal admitting a stationary subset $S\s\kappa^+$ for which $\refl({<}\cf(\kappa)^+,S)$ holds,
then  $2^\kappa=\kappa^+$.
\end{fact}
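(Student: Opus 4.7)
Since $\kappa$ is a singular strong limit, the Bukovsk\'y--Hajnal theorem yields $2^\kappa = \kappa^{\cf(\kappa)}$. Writing $\theta := \cf(\kappa)$, the task reduces to proving $\kappa^{\theta} = \kappa^+$. Suppose toward a contradiction that $\kappa^\theta \geq \kappa^{++}$. The plan is to produce a family $\langle S_i \mid i < \theta \rangle$ of stationary subsets of $S$ whose members all have cofinality strictly below $\mu := \kappa^+$ and which admits no common reflection point in $E^\mu_{>\omega}$; this directly contradicts $\refl({<}\theta^+, S)$.

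The construction runs through Shelah's pcf theory. Fix a strictly increasing sequence $\langle \kappa_i \mid i < \theta \rangle$ of regular cardinals cofinal in $\kappa$ with $\kappa_0 > \theta$. Refining this sequence if necessary, choose a scale $\vec f = \langle f_\alpha \mid \alpha < \mu \rangle$ in $\prod_{i<\theta} \kappa_i / J^{\mathrm{bd}}_\theta$; that is, $\vec f$ is $<^*$-increasing and $<^*$-cofinal. Call $\alpha \in E^\mu_{>\theta}$ \emph{good} for $\vec f$ iff there exist a cofinal $C \subseteq \alpha$ and $i^\ast < \theta$ such that $\langle f_\beta(i) \mid \beta \in C \rangle$ is strictly increasing for every $i \geq i^\ast$; otherwise $\alpha$ is \emph{bad}.

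Shelah's dichotomy for scales then yields that, under the assumption $\kappa^\theta \geq \kappa^{++}$ (which forces $\mathrm{pp}(\kappa) > \kappa^+$), the set of bad points of cofinality strictly between $\theta$ and $\kappa$ is stationary in $\mu$, and admits a partition into $\theta$-many stationary pieces $\langle B_i \mid i < \theta \rangle$ such that no $\delta \in E^\mu_{>\omega}$ is a simultaneous reflection point for every $B_i$; heuristically, reflection witnesses at every level $i$ could be glued into a goodness witness at $\delta$ itself, contradicting the badness of all the $B_i$'s at $\delta$. Intersecting with $S$, the sets $S_i := B_i \cap S$ are stationary (by standard thinning, using that $S$ is stationary and the construction of the $B_i$'s can be carried out relative to $S$), are contained in $E^\mu_{<\kappa}$, and so satisfy the cofinality side-condition. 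This yields a family violating $\refl({<}\theta^+, S)$, as desired.

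The main obstacle is precisely the pcf-theoretic input: the passage from the cardinal arithmetic failure $\kappa^\theta \geq \kappa^{++}$ to the existence of a partition of bad points that resists simultaneous reflection. This is the substance of Shelah's \emph{very good scale} / \emph{approachability} theorem for singular cardinals of cofinality $\theta$, and constitutes the combinatorial heart of the argument; the rest is bookkeeping. An alternative route of comparable difficulty goes through showing that the existence of a $\theta^+$-family of stationary sets with no common reflection point is equivalent (in the presence of a strong limit singular $\kappa$) to $\mathrm{pp}(\kappa) > \kappa^+$, which is a theorem essentially due to Shelah and invoked as black box.
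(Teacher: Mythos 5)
Your high-level route is the standard one and is almost certainly what \cite[\S4]{partI} does: the Bukovsk\'y--Hajnal theorem reduces the conclusion to $\kappa^{\cf(\kappa)}=\kappa^+$, the strong-limit hypothesis gives $\kappa^{\cf(\kappa)}=\max(\mathrm{pp}(\kappa),\kappa^+)$, and Shelah's theorem that simultaneous reflection of $\cf(\kappa)$-many stationary subsets of $S$ forces $\mathrm{pp}(\kappa)=\kappa^+$ finishes. Your last paragraph, invoking that pcf theorem as a black box, is a legitimate proof.

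The sketch you give of the inside of that black box, however, is not correct as stated. You fix a \emph{cofinal} scale $\vec f=\langle f_\alpha\mid\alpha<\mu\rangle$ on $\prod_{i<\theta}\kappa_i/J^{\mathrm{bd}}_\theta$ and assert that $\mathrm{pp}(\kappa)>\kappa^+$ forces its bad points of cofinality in $(\theta,\kappa)$ to be stationary. This implication does not hold: the approachability property $AP_\kappa$ (and a fortiori $\square^*_\kappa$) yields a scale all of whose points of relevant cofinality are good on a club, and $AP_\kappa$ is consistent with $\neg\sch_\kappa$ -- indeed, the standard extender-based Prikry models satisfy it, and Gitik--Sharon is noteworthy precisely because it was the \emph{first} $\neg\sch$ model where $AP_\kappa$ fails. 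Since goodness is a pcf invariant (any two scales at $\kappa$ agree on goodness modulo a club), you cannot repair this by picking a cleverer length-$\kappa^+$ scale. The correct mechanism uses a scale $\langle f_\alpha\mid\alpha<\kappa^{++}\rangle$ witnessing $\mathrm{pp}(\kappa)\geq\kappa^{++}$ and exploits that its initial segment $\langle f_\alpha\mid\alpha<\kappa^+\rangle$ is \emph{not} cofinal, being dominated mod $J^{\mathrm{bd}}_\theta$ by $g:=f_{\kappa^+}$; the $\theta$-many stationary sets come from partitioning according to the least index $i$ past which $f_\alpha<g$, not from bad points. A second, smaller issue: the parenthetical ``the construction of the $B_i$'s can be carried out relative to $S$'' is doing real work -- the relativized form of Shelah's theorem (non-reflecting $\theta$-families consisting of subsets of an arbitrary given stationary $S$) is true but must be quoted in that form, since $B_i\cap S$ need not be stationary for a family $\langle B_i\rangle$ produced without reference to $S$.
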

In particular, if $\kappa$ is a singular strong limit cardinal of countable cofinality for which $\sch_\kappa$ fails,
and $\refl({<}\theta,\kappa^+)$ holds, then $\theta\le\omega$. We shall soon show that $\theta:=\omega$ is indeed feasible.

The following general statement about simultaneous reflection will be useful in our verification later on.

\begin{prop}\label{p3} Suppose that $\mu$ is non-Mahlo cardinal, and $\theta\le\cf(\mu)$.
For stationary subsets $T,\Gamma,R$ of $\mu$, $\refl({<}2,T,\Gamma)+\refl({<}\theta,\Gamma,R)$ entails $\refl({<}\theta,T\cup\Gamma,R)$.
\end{prop}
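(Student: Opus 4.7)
The plan is to reduce simultaneous reflection of a family $\mathcal{S}$ of stationary subsets of $T \cup \Gamma$ at $R$ to simultaneous reflection of a related family of stationary subsets of $\Gamma$ at $R$, so that $\refl({<}\theta, \Gamma, R)$ can be invoked directly. First, fix an arbitrary $\mathcal{S} = \{S_i \mid i < \lambda\}$ meeting the hypotheses of $\refl({<}\theta, T \cup \Gamma, R)$, and set $\nu := \sup\{\cf(\alpha) \mid \alpha \in \bigcup \mathcal{S}\}$, so $\nu < \mu$. Since each $S_i = (S_i \cap T) \cup (S_i \cap \Gamma)$ is stationary, one of the two summands is stationary, and replacing $S_i$ by a stationary summand does not harm the goal (reflection of the smaller set entails reflection of $S_i$). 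Thus we may partition the index set into $I_T$ and $I_\Gamma$ so that $S_i \subseteq T$ for $i \in I_T$ and $S_i \subseteq \Gamma$ for $i \in I_\Gamma$.

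Next, for each $i \in I_T$, I will use $\refl({<}2, T, \Gamma)$ to show that the trace $T_i := \tr(S_i) \cap \Gamma$ is stationary in $\mu$. If some club $C$ were disjoint from $T_i$, then $S_i \cap \acc(C)$ would be a stationary subset of $T$, so $\refl({<}2, T, \Gamma)$ would supply some $\delta \in \Gamma \cap E^\mu_{>\omega}$ with $S_i \cap \acc(C) \cap \delta$ stationary in $\delta$. Then $\delta \in \tr(S_i)$, and since $\acc(C)$ is closed while $\acc(C) \cap \delta$ is unbounded in $\delta$, we get $\delta \in \acc(C) \subseteq C$, so $\delta \in T_i \cap C$, a contradiction.

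The main obstacle is that elements of $T_i$ can have arbitrarily large cofinality below $\mu$, so the family $\{T_i \mid i \in I_T\}$ need not satisfy the cofinality hypothesis of $\refl({<}\theta, \Gamma, R)$. This is where the non-Mahloness of $\mu$ enters: fix a club $D \subseteq \mu$ avoiding regular uncountable cardinals, so that the map $\alpha \mapsto \cf(\alpha)$ is regressive on $D \cap E^\mu_{>\omega}$. For each $i \in I_T$, Fodor's lemma applied to the stationary set $T_i \cap D$ yields a stationary subset $\tilde{S}_i \subseteq T_i$ of constant cofinality $\chi_i < \mu$. Since $|I_T| \le \lambda < \theta \le \cf(\mu)$, the supremum $\chi^* := \sup_{i \in I_T} \chi_i$ is strictly less than $\mu$, and hence
$$\mathcal{S}' := \{S_i \mid i \in I_\Gamma\} \cup \{\tilde{S}_i \mid i \in I_T\}$$
is a family of fewer than $\theta$ many stationary subsets of $\Gamma$ with cofinality bound $\max(\nu, \chi^*) < \mu$.

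Finally, I apply $\refl({<}\theta, \Gamma, R)$ to $\mathcal{S}'$ to obtain $\delta \in R \cap E^\mu_{>\omega}$ reflecting every member of $\mathcal{S}'$. For $i \in I_\Gamma$ this is already the desired reflection of $S_i$ at $\delta$. For $i \in I_T$, the reflection of $\tilde{S}_i$ at $\delta$ transfers to $S_i$: given any club $C \subseteq \delta$, the club $\acc(C)$ of $\delta$ meets the stationary set $\tilde{S}_i \cap \delta$, so pick $\gamma$ in $\tilde{S}_i \cap \acc(C) \cap \delta$; since $\gamma \in \tr(S_i)$, the set $S_i \cap \gamma$ is stationary in $\gamma$ and therefore meets the club $C \cap \gamma$, yielding $S_i \cap C \cap \delta \neq \emptyset$, as required.
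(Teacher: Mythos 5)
Your proof is correct and follows essentially the same route as the paper's: replace each $S\in\mathcal S$ concentrating on $T$ by a stationary subset of $\tr(S)\cap\Gamma$ of constant cofinality (using $\refl({<}2,T,\Gamma)$ for stationarity of the trace and non-Mahloness plus Fodor for the constant-cofinality refinement), apply $\refl({<}\theta,\Gamma,R)$ to the resulting family, and transfer reflection of the trace back to reflection of the original $S$. Your initial reduction to $S_i\subseteq T$ or $S_i\subseteq\Gamma$ is a cleaner bookkeeping device than the paper's case split on whether $S\cap\Gamma$ is stationary, but the content is identical.
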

\begin{proof} Given a collection $\mathcal S$ of stationary subsets of $T\cup\Gamma$,
with $|\mathcal S|<\theta$ and $\sup(\{\cf(\alpha)\mid \alpha\in\bigcup\mathcal S\})<\mu$, we shall first attach to any set $S\in\mathcal S$,
a stationary subset $S'$ of $\Gamma$, as follows.

$\br$ If $S\cap \Gamma$ is stationary, then let $S':=S\cap\Gamma$.

$\br$ If $S\cap \Gamma$ is nonstationary,
then for every (sufficiently thin) club $C\s\mu$, $S\cap C$ is a stationary subset of $T$, and so by $\refl({<}2,T,\Gamma)$,
there exists $\gamma\in\Gamma\cap E^\mu_{>\omega}$ such that $(S\cap C)\cap\gamma$ is stationary in $\gamma$,
and in particular, $\gamma\in C$.
So, the set $\{\gamma\in\Gamma\mid S\cap\gamma\text{ is stationary}\}$ is stationary,
and, as $\mu$ is non-Mahlo, we may pick $S'$ which is a stationary subset of it and all of its points consists of the same cofinality.

Next, as $|\mathcal S|<\cf(\mu)$, we have $\sup(\{\cf(\gamma)\mid \gamma\in S', S\in \mathcal S\})<\mu$,
and so, from $\refl({<}\theta,\Gamma,R)$, we find some $\delta\in R$ such that $S'\cap\delta$ is stationary for all $S\in\mathcal S$.
\begin{claim} Let $S\in\mathcal S$. Then $S\cap\delta$ is stationary in $\delta$.
\end{claim}
\begin{proof} If $S'=S$, then $S\cap\delta=S'\cap\delta$ is stationary in $\delta$, and we are done.
Next, assume $S'\neq S$, and let $c$ be an arbitrary club in $\delta$.
As $S'\cap\delta$ is stationary in $\delta$, we may pick $\gamma\in\acc(c)\cap S'$.
As $\gamma\in S'\s E^\mu_{>\omega}$, $c\cap\gamma$ is a club in $\gamma$, and as $\gamma\in S'$,
$S\cap\gamma$ is stationary, so $S\cap c\cap \gamma\neq\emptyset$. In particular, $S\cap c\neq\emptyset$.
\end{proof}
This completes the proof.
\end{proof}

\subsection{About Building Block II} 
In this subsection, we describe Building Block II that we will be feeding to the iteration scheme of the preceding section.
We were originally planning to use the functor given by \cite[\S6]{partI}, 
but unfortunately we found a gap in the proof of the mixing property \cite[Lemma~6.16]{partI}. 
To mitigate this gap, we shall relax Clause~\eqref{C4ptree} of \cite[Definition~6.2]{partI}
and prove that the outcome is a functor satisfying the weak mixing property (Lemma~\ref{mixinglemma} below). 
Most of the results of \cite[\S6]{partI} remain valid, as will be detailed later. Therefore, reading of this subsection does assume that the reader is comfortable with \cite[\S6]{partI}.
The upshot of this subsection is encapsulated by Corollary~\ref{onestep}.

We commence by describing our setup for this subsection.

\begin{setup}
Suppose that we are given a $\Sigma$-Prikry notion of forcing $(\mathbb P,\ell,c)$ having property $\mathcal D$.
Denote $\mathbb{P}=(P,\le)$ and $\Sigma=\langle \kappa_n\mid n<\omega\rangle$.
Also, define $\kappa$ and $\mu$ as in Definition~\ref{SigmaPrikry},
and assume that $\one_{\mathbb P}\forces_{\mathbb P}``\check\kappa\text{ is singular}"$ and that $\mu^{<\mu}=\mu$. Recall that for each $n<\omega$, we denote by $\z{\mathbb{P}}_n$ a dense $\kappa_n$-directed-closed subposet of $\mathbb{P}_n$. Our universe of sets is denoted by $V$,
and we assume that, for all $n<\omega$,
$V^{\mathbb P_n}\models \refl({<}2,E^\mu_{\omega},E^\mu_{<\kappa_n})$.
Write $\Gamma:=\{\gamma<\mu\mid \omega<\cf^V(\gamma)<\kappa\}$. 
We also assume that we are given a condition $r^\star$ forcing 
that $\dot T$ is a $\mathbb P$-name 
for some subset $T$ of $(E^\mu_\omega)^V$
such that, for all $\gamma\in \Gamma$, $T\cap\gamma$ is nonstationary in $\gamma$.
\end{setup}

For each $n<\omega$, denote $\dot{T}_n:=\{(\check\alpha,p)\mid (\alpha,p)\in E^\mu_\omega\times P_n\ \&\ p\Vdash_{\mathbb P}\check\alpha\in\dot T\}$.
\begin{lemma}\label{cor9}
For every $q\le r^\star$,
$q\Vdash_{\mathbb{P}_{\lh(q)}}``\dot{T}_{\lh(q)}\text{ is nonstationary}"$.
\end{lemma}
\begin{proof} The proof is almost the same as that of \cite[Lemma 6.1]{partI},
so we settle here for a sketch.

Suppose not. Set $n:=\lh(r^\star)$ and pick $p\le^0 r^\star$ that $\mathbb P_n$-forces $\dot{T}_{n}$ is stationary.
As $V^{\mathbb P_n}\models \refl({<}2,E^\mu_{\omega},E^\mu_{<\kappa_n})$,
we may fix $p'\le^0 p$ and $\gamma\in E^\mu_{<\kappa_n}$ of uncountable cofinality such that $p'\Vdash_{\mathbb{P}_n}``\dot{T}_n\cap\gamma\text{ is stationary}"$.
As $\mathbb P_n$ has a $\kappa_n$-directed-closed dense subset, $\gamma\in\Gamma$,
and there exists a ground model stationary subset $B$ of $\gamma$ such that
$$r\Vdash_{\mathbb{P}_n}``\dot{T}_n\cap\gamma\text{ contains the stationary set }\check B".$$

By definition of the name $\dot T_n$, $r\Vdash_{\mathbb{P}}\check B\s\dot{T}\cap\gamma$.
Finally, as $\otp(B)<\kappa$, we infer from Lemma~\ref{l14}\eqref{C1l14} that $B$ remains stationary in any forcing extension by $\mathbb P$.
So,  $r\Vdash_{\mathbb{P}}``\dot{T}\cap\gamma\text{ is stationary}"$,
contradicting the fact that $r\le p'\le p\le r^\star$ and $\gamma\in\Gamma$.
\end{proof}
Set $I:=\omega\setminus \ell(r^\star)$. By the preceding lemma,
for each $n<\omega$, we may pick a $\mathbb P_n$-name $\dot{C}_n$ for a club subset of $\mu$ such that,
for all $q\le r^\star$ with $n:=\lh(q)$ in $I$, $q\Vdash_{\mathbb P_n}\dot T_n\cap\dot C_n=\emptyset$.
Let $R$ be the binary relation
$$R:=\{(\alpha,q)\in\mu\times P\mid q\le r^\star\ \&\ \forall r\le q[\lh(r)\in I\rightarrow r\forces_{\mathbb{P}_{\lh(r)}}\check\alpha\in \dot{C}_{\lh(r)}]\}.$$
\begin{remark}\label{additionalfeatureofT+}  The relation $R$  is downwards closed, i.e.,
for all $(\alpha,q)\in R$ and $q'\le q$, $(\alpha,q')\in R$, as well.
\end{remark}

We shall present a $\Sigma$-Prikry notion of forcing for killing the stationarity of the following set: 
$$\dot{T}^+:=\{(\check{\alpha},p)\mid (\alpha,p)\in (E^\mu_\omega)^V\times P \;\&\; p\forces_{\mathbb{P}_{\ell(p)}} \check{\alpha}\notin \dot{C}_{\ell(p)}\}.$$
By the next lemma, killing the stationarity of $\dot{T}^+$ would also kill the stationarity of $\dot T$,
which is our primary goal.
\begin{lemma}\label{lemma46}
\begin{enumerate}
\item  $r^\star\forces_{\mathbb{P}} \dot{T}\s \dot{T}^+$;
\item For every $(\alpha,q)\in R$, $q\forces_\mathbb{P}\check\alpha\notin\dot{T}^+$.
\end{enumerate}
\end{lemma}
\begin{proof} (1) Let $q\le r^\star$ and an ordinal $\alpha$ be such that $q\forces_{\mathbb{P}}\check{\alpha}\in\dot{T}$. 
Note that $\alpha\in (E^\mu_\omega)^V$.
Put $n:=\ell(q)$. 
By the definition, $(\check{\alpha},q)\in \dot{T}_{n}$ and so, in particular,  $q\forces_{\mathbb{P}_{n}}\check{\alpha}\in \dot{T}_{n}$. 
Since $q\leq r^\star$, it follows that $q\forces_{\mathbb{P}_{n}}\check{\alpha}\notin \dot{C}_{n}$, and thus $(\check{\alpha},q)\in \dot{T}^+$. 
In particular, $q\forces_{\mathbb{P}}\check{\alpha}\in\dot{T}^+$.

(2) Suppose that $(\alpha,q)\in R$. Towards a contradiction, suppose that there is $q'\le q$ such that $q'\forces_{\mathbb{P}} \check{\alpha}\in \dot{T}^+$. 
By further extending $q'$, we may assume that $(\check{\alpha},q')\in\dot{T}^+$. Therefore, $q'\forces_{\mathbb{P}_{\ell(q')}}\check{\alpha}\notin \dot{C}_{\ell(q')}$. 
However $q'\leq q$, contradicting the fact that $R$ is downwards closed.
\end{proof}

The next definition is motivated by the upcoming simple lemma.
\begin{definition}\label{tauns} Set $\tau_n:=\{(\check{\alpha},p)\in \dot{T}^+\mid \alpha\in (E^\mu_\omega)^V\ \&\ p\in P_n\}$
for every $n<\omega$.
\end{definition}
\begin{lemma}\label{ThekeylemmaaboutT+}
Let $n<\omega$ and $p\in P_n$. The following hold:
\begin{enumerate}
\item $\tau_n\s \dot{T}^+_n$;
\item $p\forces_{\mathbb{P}_n}\tau_n=(\check{E}^\mu_\omega\setminus \dot{C}_n)$.
\end{enumerate}
\end{lemma}
\begin{proof}
(1) Given $(\check{\alpha},p)\in \tau_n$, we have that $(\check{\alpha},p)\in \dot{T}^+$ and $p\in P_n$. Hence, $p\forces_{\mathbb{P}}\check{\alpha}\in \dot{T}^+$ and $p\in P_n$, which yields $(\check{\alpha},p)\in \dot{T}^+_n$, as desired.

(2) We begin by proving the left-to-right inclusion. Let $q\leq_{\mathbb{P}_n} p$ and $\alpha\in (E^\mu_\omega)^V$ be such that $q\forces_{\mathbb{P}_n} \check{\alpha}\in\tau_n$. By possibly $\mathbb{P}_n$-extending $q$ we may further assume that $(\check{\alpha},q)\in\tau_n$. By the definition of $\tau_n$, $(\check{\alpha},q)\in \dot{T}^+$ and $q\in P_n$. 
By the definition of $\dot{T}^+$,  $q\forces_{\mathbb{P}_n}\check{\alpha}\notin \dot{C}_n$. 
Altogether, $q\forces_{\mathbb{P}_n}\check{\alpha}\in (\check{E}^\mu_\omega\setminus \dot{C}_n)$.
For the other inclusion, suppose that $q\leq_{\mathbb{P}_n} p$ and $\alpha\in (E^\mu_\omega)^V$ are such that $q\forces_{\mathbb{P}_n}\check{\alpha}\notin \dot{C}_n$.  
By the definition of $\dot{T}^+$ this yields $(\check{\alpha},q)\in \dot{T}^+$, and hence $(\check{\alpha},q)\in\tau_n$. Consequently, $q\forces_{\mathbb{P}_n}\check{\alpha}\in \tau_n$.
\end{proof}
The above lemma will be crucial in our verification of density of the poset $(\z{\mathbb{P}}_\delta)_n$ in $(\mathbb{P}_\delta)_n$ at limit stages $\delta$ (see Lemma~\ref{ringisdense}).

\begin{definition}[relaxed form of {\cite[Definition~6.2]{partI}}]\label{labeled-p-tree} Suppose $p\in P$.
A \emph{labeled $p$-tree} is a function $S:W(p)\rightarrow[\mu]^{<\mu}$ such that for all $q\in W(p)$:
\begin{enumerate}
\item\label{C1ptree} $S(q)$ is a closed bounded subset of $\mu$;
\item\label{C2ptree} $S(q')\supseteq S(q)$ whenever $q'\le q$;
\item\label{C3ptree} $q\Vdash_{\mathbb P} S(q)\cap\dot{T}^+=\emptyset$;
\item\label{d162}\label{C4ptree} there is a natural number $m$ such that
for any pair $q'\le q$ of elements of $W(p)$, if $S(q')\neq\emptyset$ and $\lh(q)\ge\lh(p)+m$, then $(\max(S(q')),q)\in R$.
The least such $m$ is denoted by $m(S)$.
\end{enumerate}
\end{definition}
\begin{remark}\label{incompatiblewithrstar}
By Clause~\eqref{C4ptree} and the Definition of $R$, for any pair $q'\le q$ of elements of $W(p)$,
if $\lh(q)\ge\lh(p)+m(S)$ and $q$ is incompatible with $r^\star$, then $S(q')=\emptyset$.
\end{remark}

\begin{definition}[{\cite[Definition~6.3]{partI}}]\label{strategy}
For $p\in P$, we say that $\vec S=\langle S_i\mid i\leq\alpha\rangle$ is a \emph{$p$-strategy} iff all of the following hold:
\begin{enumerate}
\item\label{C1pstrategy} $\alpha<\mu$;
\item\label{i3}
\label{C2pstrategy} $S_i$ is a labeled $p$-tree for all $i\leq\alpha$;
\item\label{C3pstrategy} for every $i<\alpha$ and $q\in W(p)$, $S_{i}(q)\sqsubseteq S_{i+1}(q)$;
\item\label{C4pstrategy} for every $i<\alpha$ and a pair $q'\le q$ in $W(p)$,  $(S_{i+1}(q)\setminus S_i(q))\sqsubseteq (S_{i+1}(q')\setminus S_i(q'))$;
\item\label{C5pstrategy} for every  limit $i\leq\alpha$ and $q\in W(p)$, $S_i(q)$ is the ordinal closure of $\bigcup_{j<i}S_j(q)$.
In particular, $S_0(q)=\emptyset$ for all $q\in W(p)$.
\end{enumerate}
\end{definition}

Now, we are ready to describe our functor.

\begin{definition}[{\cite[Definition~6.4]{partI}}]\label{d20}
Let $\mathbb{A}(\mathbb{P}, \dot{T})$ be the notion of forcing $\mathbb{A}:=(A,\unlhd)$, where:
\begin{enumerate}
\item
\label{C1d20} $(p,\vec S)\in A$ iff $p\in P$, and $\vec S$ is either the empty sequence, or a $p$-strategy;
\item
\label{C2d20} $(p', \vec{S'})\unlhd(p, \vec S)$ iff:
\begin{enumerate}
\item
\label{C2ad20} $p'\le p$;
\item
\label{C2bd20} $\dom(\vec{S'})\geq \dom(\vec S)$;
\item
\label{C2cd20} $S'_i(q)=S_i(w(p,q))$ for all $i\in \dom(\vec S)$ and $q\in W(p')$.
\end{enumerate}
\end{enumerate}

For all $p\in P$, denote $\myceil{p}{\mathbb A}:=(p,\emptyset)$.
\end{definition}

\begin{definition}[{\cite[Definitions 6.10 and 6.11]{partI}}]\label{d45}\label{DefCA}\hfill
\begin{itemize}
\item Define $c_{\mathbb A}:A\rightarrow H_\mu$ by letting, for all $(p,\vec S)\in A$,
$$c_{\mathbb A}(p,\vec S):=(c(p),\{ ( i,c(q),S_i(q))\mid i\in\dom(\vec S), q\in W(p)\}).$$
\item Define $\pi:A\rightarrow P$ by stipulating $\pi(p,\vec S):=p$ and $\ell_\mathbb{A}:=\ell\circ\pi$.
\item Given $a=(p,\vec S)$ in $A$, define $\fork{a}:\cone{p}\rightarrow A$ by letting
for each $p'\le p$, $\fork{a}(p'):=(p',\vec{S'})$, where $\vec{S'}$ is the sequence $\langle S_i':W(p')\rightarrow[\mu]^{<\mu}\mid i<\dom(\vec{S})\rangle$ satisfying:
\begin{equation}\label{pitchfork}
\tag{*}S'_i(q):=S_i(w(p,q))\text{  for all }i\in\dom(\vec{S'})\text{ and }q\in W(p').
\end{equation}
\end{itemize}
\end{definition}

Even after relaxing Clause~\eqref{C4ptree} of \cite[Definition~6.2]{partI} to that of Definition~\ref{labeled-p-tree}, the following remains valid,
with essentially the same proofs.

\begin{fact}[{\cite[Corollary 4.13,  Lemma 6.6, Theorem 6.8]{partI}}]\hfill\label{FactPartI}
\begin{enumerate}
\item $\one\forces_\mathbb{A}\check{\mu}=\check{\kappa}^+$; 
\item For every $\nu\geq \mu$, if $\mathbb{P}$ is a subset of $H_\nu$, then so is $\mathbb{A}$;
\item $\myceil{r^\star}{\mathbb{A}}\forces_\mathbb{A}``\dot{T}^+$ is nonstationary''.\footnote{Here, Claim~\ref{claim244} below plays the role of \cite[Lemma~6.7]{partI}. Also, note that this is trivial when $\dot{T}^+$ is a $\mathbb{P}$-name for a nonstationary subset of $\mu$ in $V$. }
\end{enumerate}
\end{fact}
\begin{remark}\label{RemarkKillingT}
By Lemma~\ref{lemma46} and Fact~\ref{FactPartI}(3), $\myceil{r^\star}{\mathbb{A}}\forces_\mathbb{A}``\dot{T}$ is nonstationary''.
\end{remark}

\begin{lemma}\label{forkingindeed}
 $(\pitchfork,\pi)$ is a forking projection from $(\mathbb{A},\ell_\mathbb{A},c_\mathbb{A})$ to $(\mathbb{P},\ell,c)$.
\end{lemma}
\begin{proof} The proof of \cite[Lemma~6.13]{partI} goes through,
so we only focus on Clause~\eqref{frk0} of Definition~\ref{forking}.
Let $a\in A$ and $p'\le\pi(a)$;
we shall show that $\fork{a}(p')\in A$ and $\fork{a}(p')\unlhd a$.

Write $a$ as $(p,\vec{S})$. If $\vec S=\emptyset$, then $\fork{a}(p')=\myceil{p'}{\mathbb A}$, and we are done.

Next, suppose that $\dom(\vec S)=\alpha+1$. Let $(p',\vec{S'}):=\fork{a}(p')$.
Let $i\leq\alpha$ and we shall verify that $S'_i$ is a $p'$-labeled tree
with $m(S_i')\le m(S_i)$. We go over the clauses of Definition~\ref{labeled-p-tree}.
To this end, let $q'\le q$ be arbitrary pair of elements of $W(p')$.
\begin{itemize}
\item[(2)] By Definition~\ref{SigmaPrikry}(\ref{itsaprojection}),
we have $w(p,q')\le w(p,q)$, so that $S_i'(q')=S_i(w(p,q'))\supseteq S_i(w(p,q))=S_i'(q)$.
\item[(3)] As $q\le w(p,q)$, $w(p,q)\Vdash_{\mathbb P} S_i(w(p,q))\cap\dot{T}=\emptyset$, so that, since $S_i'(q)=S_i(w(p,q))$,
we clearly have $q\Vdash_{\mathbb P} S_i'(q)\cap\dot{T}=\emptyset$.
\item[(4)]  To avoid trivialities,
Suppose that $S'_i(q')\neq\emptyset$ and $\lh(q)\ge m(S_i)$.
Write $\gamma:=\max(S_i'(q'))$.
 As $\lh(w(p,q))=\lh(q) \ge m(S_i)$ and $\gamma=\max(S_i(w(p,q')))$, we infer that $(\gamma,w(p,q))\in R$.
In addition, $q\le w(p,q)$, so by the definition of $R$ it follows that $(\gamma,q)\in R$.
Recalling that $\max(S_i'(q))=\gamma$, we are done.\footnote{Following the terminology of Definition~\ref{labeled-p-tree}\eqref{C4ptree} note that here we have showed that $m(S'_i)\leq m(S_i)$. This will become important soon, whenever we introduce the type map associate to Sharon's functor (see Lemma~\ref{typeindeed}).}
\end{itemize}
To prove that $(p',\vec{S}')$ is a condition in $A$ it now remains to argue that $\vec{S}'$ fulfills the requirements described in Clauses~\eqref{C3pstrategy} and \eqref{C5pstrategy} of Definition~\ref{strategy} but this already follows from the definition of  $\vec{S}'$ and the fact that $\vec{S}$ is a $p$-strategy. Finally $\fork{a}(p')=(p',\vec{S}')\unlhd (p,\vec{S})=a$ by the very choice of $p'$ and by Definition~\ref{d45}.
\end{proof}

We now introduce a type $\tp$ over $(\pitchfork,\pi)$ witnessing the weak mixing property.

\begin{definition}\label{defntypes} Define a map $\tp:A\rightarrow{}^{<\mu}\omega$, as follows.

Given $a=(p,\vec S)$ in  $A$, write $\vec S$ as $\langle S_i\mid i<\beta\rangle$, and then let
$$\tp(a):=\langle m(S_i)\mid i<\beta\rangle.$$
\end{definition}
We shall soon verify that $\tp$ is a type, but will use the $\mtp$ notation of Definition~\ref{type}
from the outset. In particular, we will have $\z{\mathbb A}=(\z A,\unlhd)$, with $\z{A}:=\{ a\in A\mid \pi(a)\in\z{P}_{\ell(\pi(a))}\ \&\ \mtp(a)=0\}$.
Note that the supercollection $\{ a\in A\mid \mtp(a)=0\}$ coincides with the set $A$ from \cite[Definition~6.4]{partI}.
In particular, the proof of \cite[Lemma~6.15]{partI} goes through, yielding the following crucial consequence of each of the $\dot C_n$'s being a $\mathbb P_n$-name for a closed subset of $\mu$:

\begin{fact}\label{C1ASigmaPrirky} For all $n<\omega$, $\z{\mathbb A}_n^\pi$ is $\mu$-directed-closed.\qed
\end{fact}

\begin{lemma}\label{typeindeed} The map $\tp$ is a type over $(\pitchfork,\pi)$.
\end{lemma}
\begin{proof}
We go over the clauses of  Definition~\ref{type}:
	
\begin{enumerate}
\item[\eqref{type1}] This follows from the mere definition of $\tp$.
\item[\eqref{type2}] Write $b=(p',\vec S')$ and $a=(p,\vec S)$.
By Definitions \ref{d20} and \ref{defntypes}, $\dom(\tp(b))=\dom(\vec S')\ge\dom(\vec S)=\dom(\tp(a))$. Fix $i\in\dom(\tp(a))$ and let us show that $\tp(b)(i)\leq \tp(a)(i)$, i.e., that $m(S'_i)\leq m(S_i)$. 

Let $q'\le q$ be a pair of elements in $W(p')$ with $S'_i(q')\neq \emptyset$ and $\ell(q)\geq\ell(p')+ m(S_i)$. By Definition~\ref{d20}\eqref{C2cd20},  $S'_i(q')=S_i(w(p,q'))$, hence it follows that $w(p,q')\le w(p,q)$ is a pair of elements in $W(p)$ with $S_i(w(p,q'))\neq \emptyset$. Set $\gamma:=\max(S_i(w(p,q')))$. By Definition~\ref{labeled-p-tree}\eqref{C4ptree}, $(\gamma,w(p,q))\in R$ hence the definition of $R$ yields $(\gamma,q)\in R$. Noting that $\gamma=\max(S'_i(q'))$ it finally follows that  $m(S'_i)\leq m(S_i)$.

\item[\eqref{type3}] This follows from Definition~\ref{d45}\eqref{pitchfork}.
\item[\eqref{type6}] Let $a\in A$. If $a=\myceil{\pi(a)}{\mathbb{A}}$ then  $a=(\pi(a),\emptyset)$, and so $\tp(\myceil{\pi(a)}{\mathbb{A}})$ is  the empty sequence. Conversely, if $\tp(a)$ is the empty sequence then Definition~\ref{defntypes} implies that $a$ takes the form $(\pi(a),\emptyset)$, hence $a=\myceil{\pi(a)}{\mathbb{A}}$.

\item[\eqref{type4}] 	Write $a$ as $(p,\langle S_i\mid i<\dom(\tp(a))\rangle)$ and let $\alpha\in\mu\setminus \dom(\tp(a))$.  There are two cases to consider:

$\br$ 	If $\dom(\tp(a))=0$, then let $a{}^{\curvearrowright\alpha}:=(p,\langle T_i\mid i\le\alpha\rangle)$,
where $T_i:W(p)\rightarrow\{\emptyset\}$ is constant for every $i\le\alpha$.

$\br$ 	Otherwise, say $\dom(\tp(a))=\beta+1$, let $a{}^{\curvearrowright\alpha}:=(p,\langle T_i\mid i\le\alpha\rangle)$,
where $T_i:=S_{\min\{i,\beta\}}$ for every $i\le\alpha$.	

It is routine to check that $a{}^{\curvearrowright\alpha}$ is as desired.

\item[\eqref{newstretch}] Write $b=(p',\vec{S}')$ and $a=(p,\vec{S})$ and set $\gamma:=\dom(\tp(b))$. 
If 	$\gamma=0$ then $b{}^{\curvearrowright \alpha}\unlhd a{}^{\curvearrowright\alpha}$ follows simply from $p'\le p$. Otherwise,  $\gamma$ takes the form $\beta+1$ and  the above clause yields $b{}^{\curvearrowright \alpha}=(p',\vec{T}')$, where $\vec{T}':=\langle T'_i\mid i\leq \alpha\rangle$ and $T'_i:=S'_{\min\{i,\beta\}}$. Similarly, $a{}^{\curvearrowright \alpha}=(p,\vec{T})$, where $\vec{T}:=\langle T_i\mid i\leq \alpha\rangle$ and $T_i:=S_{\min\{i,\beta\}}$. Using that $b\unlhd a$, Definition~\ref{d20} yields $b{}^{\curvearrowright\alpha}\unlhd a{}^{\curvearrowright\alpha}$, as wanted.

\item[\eqref{type5}] Let $a=(p,\vec{S})\in A$. To avoid trivialities, let us assume that $\vec{S}\neq \emptyset$.

$\br$ Suppose $p$ is incompatible with $r^\star$. 
Then, by Remark~\ref{incompatiblewithrstar}, for all $i<\dom(\tp(a))$ and all $q\in W(p)$, $S_i(q)=\emptyset$.
Therefore, $\mtp(a)=0$.
Using Definition~\ref{SigmaPrikry}\eqref{c2}  find $p'\le^0 p\in \z{P}$ and set $b:=\fork{a}(p')$.  Combining  Clauses~\eqref{type2} and \eqref{type3} above with the fact that $\mtp(a)=0$ it easily follows that $\mtp(b)=0$. Also,  $\pi(b)=p'\in \z{P}_{\ell(p)}$. Thus,   $b\in\z{A}_{\ell(p)}\downarrow a$, as wanted.

$\br$ Suppose $p\le r^\star$. The following claim will give us the desired condition.
\end{enumerate}	

\begin{claim}\label{claim244} 
Let $\epsilon<\mu$.
There exist $\alpha>\epsilon$ and $q\le^0\pi(a)$ such that $(\alpha,q)\in R$.
Furthermore, there exist $\alpha>\epsilon$ and
$b=(q,\vec T)\unlhd^0 a$ such that $b\in\z{\mathbb A}$, $\dom(\vec T)=\alpha+1$, and
 for all $r\in W(q)$, $\max(T_\alpha(r))=\alpha$. 
\end{claim}
\begin{proof}
Since $(\mathbb P,\lh,c)$ is $\Sigma$-Prikry, we infer from Definition~\ref{SigmaPrikry}(\ref{csize}) that $|W(p)|<\mu$.
Thus, by possibly extending $\epsilon$, we may assume that $S_i(q)\s\epsilon$, for all $q\in W(p)$ and $i\in\dom(\tp(a))$.
By Clause~\eqref{type4}, we may also assume that $\dom(\tp(a))$ is a successor ordinal, say, it is $\beta+1$.

As $p\le r^\star$, by the very same proof of \cite[Claim~5.6.2(1)]{partI} and using Clause~\eqref{c2} of Definition~\ref{SigmaPrikry}, we may fix $(\alpha,q)\in R$ with $\alpha>\beta+\epsilon$, $q\le^0 p$ and $q\in \z{P}_{\ell(p)}$.
Define $\vec T=\langle T_i:W(q)\rightarrow[\mu]^{<\mu}\mid i\leq\alpha\rangle$ by letting for all $r\in W(q)$ and $i\in\dom(\vec T)$:
$$T_i(r):=\begin{cases}
S_i(w(p,r)),&\text{if }i\leq\beta;\\
S_{\beta}(w(p,r))\cup\{\alpha\},&\text{otherwise}.
\end{cases}$$
It is easy to see that $T_i$ is a labeled $q$-tree for each $i\leq \alpha$. By Definitions \ref{strategy},  \ref{d20} and \ref{d45},
we also have that $b=(q,\vec T)$ is a condition in $\mathbb A$ with $b\unlhd^0a$ and $\pi(b)=q\in\z{P}_{\ell(p)}$.
As $(\alpha,q)\in R$, then $(\alpha,r)\in R$ for all $r\le q$, hence $\mtp(b)=0$. Therefore,  $b$ is a condition in $\z{\mathbb A}$ with the desired properties. 
\end{proof}

This completes the proof.
\end{proof}

\begin{lemma}[Weak Mixing Property]\label{mixinglemma}
For all $a\in A$, $n<\omega$, $\vec r$, and $p'\le^0 \pi(a)$,
and for every function $g:W_n(\pi(a))\rightarrow \mathbb{A}\downarrow a$, if there exists an ordinal $\iota$ such that all of the following hold:
\begin{enumerate}
\item $\vec r=\langle r_\xi \mid \xi<\chi\rangle$ is a good enumeration of $W_n(\pi(a))$;
\item  $\langle \pi(g(r_\xi))\mid \xi<\chi\rangle$ is diagonalizable with respect to $\vec r$, as witnessed by $p'$;
\item for every $\xi<\chi$:
\begin{itemize}
\item if $\xi<\iota$, then $\dom(\tp(g(r_\xi))=0$;
\item if $\xi=\iota$, then $\dom(\tp(g(r_\xi))\ge 1$;
\item if $\xi>\iota$, then $\dom(\tp(g(r_\xi))>(\sup_{\eta<\xi}\dom(\tp(g(r_\eta)))+1$;
\end{itemize}
\item for all $\xi\in (\iota,\chi)$ and $i\in[\dom(\tp(a)),\sup_{\eta<\xi}\dom(\tp(g(r_\eta)))]$,
$$\tp(g(r_\xi))(i)\leq\mtp(a),$$
\item
$\sup_{\iota\leq \xi<\chi} \mtp(g(r_\xi))<\omega$,
\end{enumerate}
then there exists $b\unlhd^0 a$ with $\pi(b)=p'$ and $\mtp(b)\leq n+\sup_{\iota\leq \xi<\chi} \mtp(g(r_\xi))$, such that for all  $q'\in W_n(p')$, $$\fork{b}(q')\unlhd^0 g(w(\pi(a),q')).$$
\end{lemma}
\begin{proof} Let $a:=(p,\vec{S})$. 
For each $\xi<\chi$, set $(p_{\xi}, \vec{S}^{\xi}):=g(r_\xi)$.

\begin{claim}\label{iotachiOK}
If $\iota\ge\chi$ then there is $b\in A$ as in the lemma.
\end{claim}
\begin{proof}
If $\iota\ge\chi$ then Clause~\eqref{Mixing3} yields $\dom(\tp(g(r_\xi))=0$ for all $\xi<\chi$. Hence, Clause~\eqref{type6} of Definition~\ref{type} yields $g(r_\xi)=\myceil{p_\xi}{\mathbb{A}}$ for all $\xi<\chi$. In particular also $a=\myceil{p}{\mathbb{A}}$. Set $b:=\myceil{p'}{\mathbb{A}}$, where $p'$ is given by  Clause~\eqref{Mixing2}.

Clearly, $\pi(b)=p'$ and $b\unlhd^0 a$. 
Let $q'\in W_n(p')$. By Clause~\eqref{Mixing2} above, $q'\le^0 p_\xi$, where $\xi$ is such that $r_\xi=w(p, q')$.  
Finally, Definition~\ref{forking}\eqref{frk6} yields $\fork{b}(q')=\myceil{q'}{\mathbb{A}}\unlhd^0\myceil{p_\xi}{\mathbb{A}}=g(r_\xi)$, as desired.
\end{proof}

Hereafter let us assume that $\iota<\chi$. For each $\xi\in[\iota,\chi)$,  Clause~\eqref{Mixing3} and Definition~\ref{defntypes} together imply that $\dom(\vec{S}^\xi)=\alpha_\xi+1$ for some $\alpha_\xi<\mu$. 
Moreover, Clause~\eqref{Mixing3} yields $\sup_{\iota\leq \eta<\xi} \alpha_\eta<\alpha_\xi$ for all $\xi\in (\iota,\chi)$. 
Likewise,  the same clause  implies that $g(r_\xi)=\myceil{p_\xi}{\mathbb{A}}$, hence $\vec{S}^\xi=\emptyset$, for all $\xi<\iota$.

Let $\langle s_\tau\mid \tau<\theta\rangle$ be a good enumeration  $W_n(p')$.
By Fact~\ref{forkingfacts}, $\theta<\mu$.
For each $\tau<\theta$, set $r_{\xi_\tau}:=w(p,s_\tau)$.
By Clause~\eqref{Mixing1} above,  for each $\tau<\theta$, $$s_\tau\le^0 \pi(g({w(p,s_\tau)}))=\pi(g(r_{\xi_\tau}))=p_{\xi_\tau}.$$

Set $\alpha':=\sup_{\iota\leq \xi<\chi}\alpha_{\xi}$ and $\alpha:=\sup(\dom(\vec{S}))$.\footnote{Note that $a$ might be $\myceil{p}{\mathbb{A}}$, so we are allowing $\alpha=0$.} 
By regularity of $\mu$ and Clause~\eqref{Mixing3} above it follows that $\alpha< \alpha'<\mu$. 
Our goal is to define a sequence $\vec{T}=\langle T_i:W(p')\rightarrow[\mu]^{<\mu}\mid i\leq\alpha'\rangle$ for which $b:=(p',\vec{T})$ is a condition satisfying the conclusion of the lemma.

As  $\langle s_\tau\mid \tau<\theta\rangle$ is a good enumeration of the $n^{th}$-level of the $p'$-tree $W(p')$,
Fact~\ref{lemma7} entails that, for each $q\in W({p'})$, there is a unique ordinal $\tau_q<\theta$, such that $q$ is comparable with $s_{\tau_q}$.
It thus follows from Fact~\ref{lemma7}(3) that, for all $q\in W(p')$, $\ell(q)-\ell(p')\geq n$ iff $q\in W(s_{\tau_q})$.
Moreover, for each $q\in W_{\geq n}(p')$, $q\le s_{\tau_q}\le^0 p_{{\xi_{\tau_q}}}$, hence $w(p_{_{\xi_{\tau_q}}},q)$ is well-defined.

Now, for all $i\leq\alpha'$ and $q\in W({p'})$, let:
$$T_i(q):=
\begin{cases}
S^{\xi_{\tau_q}}_{\min\{i,\alpha_{\xi_{\tau_q}}\}}(w(p_{\xi_{\tau_q}},q)),& \text{if }q\in {W(s_{\tau_q})}\;\& \;\iota\leq \xi_{\tau_q};\\
S_{\min\{i,\alpha\}}(w(p,q)),& \text{if }q\notin {W(s_{\tau_q})}\;\&\; \alpha>0;\\
\emptyset, & \text{otherwise.}
\end{cases}$$
\begin{claim}\label{claim5211}
Let $i\leq\alpha'$. Then $T_i$ is a labeled $p'$-tree.
\end{claim}
\begin{proof}
Fix $q\in W(p')$ and let us go over the Clauses of Definition~\ref{labeled-p-tree}. The verifications of \eqref{C1ptree}--\eqref{C3ptree} are similar to that of \cite[Claim 6.16.1]{partI}, so we just provide details for the new Clause~\eqref{C4ptree}.

For each $i< \alpha'$, set
$$\xi(i):=\min\{\xi\in[\iota,\chi)\mid i\leq \alpha_\xi\}.$$

\begin{subclaim}
If $i< \alpha'$, then
$$m(T_i)\leq n+\max\{\mtp(a),\sup\nolimits_{\iota\leq \eta<\xi(i)} \mtp(g(r_\eta)),\tp(g(r_{\xi(i)})(i)\}.$$
\end{subclaim}

\begin{proof}
Let $q'\le q$ be  in $W(p')$ with $q\in W_k(p')$, where
$$k\geq n+\max\{\mtp(a),\sup\nolimits_{\iota\leq \eta<\xi(i)} \mtp(g(r_\eta)),\tp(g(r_{\xi(i)})(i)\}.$$
Suppose that  $T_i(q')\neq \emptyset$. Denote $\tau:=\tau_{q'}$ and $\delta:=\max(T_i(q'))$. 
Since $\ell(q)\geq \ell(p')+n$, note that  $q,q'\in W(s_\tau)$. Also, $\iota\leq \xi_\tau$, as otherwise $T_i(q')=\emptyset$. Therefore, we fall into the first option of the casuistic getting
 $$T_i(q')=S^{{\xi_\tau}}_{\min\{i,\alpha_{\xi_\tau}\}}(w(p_{\xi_\tau},q')).$$

$\br$  Assume that $\xi_\tau<\xi(i)$. Then,  $\alpha_{\xi_\tau}<i$ and so  $$T_i(q')=S^{\xi_\tau}_{\alpha_{\xi_\tau}}(w(p_{\xi_\tau},q')).$$
We have that $w(p_{\xi_\tau},q')\le w(p_{\xi_\tau},q)$ is a pair in $W_{k-n}(p_{\xi_\tau})$ and that the set $S^{\xi_\tau}_{\alpha_{\xi_\tau}}(w(p_{\xi_\tau},q'))$ is non-empty. Also, $k-n\geq \mtp(g(r_{\xi_\tau}))=m(S^{\xi_\tau}_{\alpha_{\xi_\tau}})$. So, by Clause~\eqref{C4ptree} for $S^{\xi_\tau}_{\alpha_{\xi_\tau}}$, we have that $(\delta, w(p_{\xi_\tau},q))\in R$, and thus $(\delta,q)\in R$.

\smallskip

$\br$  Assume that $\xi(i)\leq  \xi_\tau$. Then
 $i\leq \alpha_{\xi(i)}\leq \alpha_{\xi_\tau}$, and thus $$T_i(q')=S^{\xi_\tau}_i(w(p_{\xi_\tau},q')).$$
 If $\dom(\tp(a))\leq i\leq \sup_{\iota\leq \eta<\xi(i)}\alpha_\eta$,  by Clause~\eqref{Mixing4} above,  $$\tp(g(r_{\xi_\tau}))(i)\leq \mtp(a).$$
 Otherwise, if $\sup_{\iota\leq \eta<\xi(i)}\alpha_\eta<i\leq \alpha_{\xi(i)}$, again by Clause~\eqref{Mixing4} above
 $$\tp(g(r_{\xi_\tau}))(i)\leq \max\{\mtp(a),\tp(g(r_{\xi(i)})(i)\}.$$
 In either case,  $w(p_{\xi_\tau},q)\in W_{k-n}(p_{\xi_\tau})$ and  $k-n\geq \tp(g(r_{\xi_\tau}))(i)= m(S^{\xi_\tau}_i)$. So by Clause~\eqref{C4ptree} of $S^{\xi_\tau}_i$  we get that $(\delta, w(p_{\xi_\tau},q))\in R$, hence $(\delta,q)\in R$.
\end{proof}

\begin{subclaim}
$m(T_{\alpha'})\leq n+\sup_{\iota\leq \xi<\chi} \mtp(g(r_\xi))$.
\end{subclaim}
\begin{proof}
Let $q'\le q$ be  in $W(p')$ with $q\in W_k(p')$ and
$k\geq  n+\sup_{\iota\leq \xi<\chi} \mtp(g(r_\xi))$,
and suppose that  $T_{\alpha'}(q')\neq \emptyset$.
 Denote $\tau:=\tau_{q'}$ and $\delta:=\max(T_{\alpha'}(q'))$.

Since $k\geq n$, $q,q'\in W(s_{\tau})$. Also, $\iota\leq \xi_\tau$, as otherwise $T_{\alpha'}(q')=\emptyset$.   
Hence, $T_{\alpha'}(q')=S^{\xi_{\tau}}_{\alpha_{\xi_\tau}}(w(p_{\xi_{\tau}},q')).$
Then $w(p_{\xi_{\tau}},q')\leq w(p_{\xi_{\tau}},q)$ is a pair in $W_{k-n}(p_{\xi_\tau})$ with $k-n\geq \mtp(g(r_{\xi_\tau}))= m(S^{\xi_\tau}_{\alpha_{\xi_\tau}})$. 
So, by Definition~\ref{labeled-p-tree}\eqref{C4ptree} regarded with respect to $S^{\xi_\tau}_{\alpha_{\xi_\tau}}$, it follows that  $(\delta,w(p_{\xi_{\tau}},q))\in R$. Thus, $(\delta,q)\in R$, as wanted.
\end{proof}

The combination of the above subclaims yield Clause~\eqref{C4ptree} for $T_i$. \qedhere
\end{proof}

\begin{claim} The sequence $\vec{T}=\langle T_i:W(p')\rightarrow[\mu]^{<\mu}\mid i\leq\alpha'\rangle$ is a $p'$-strategy.
\end{claim}
\begin{proof} We need to go over the clauses of Definition~\ref{strategy}.
However, Clause~\eqref{C1pstrategy} is trivial, Clause~\eqref{C2pstrategy} is established in the preceding claim,
and Clauses \eqref{C3pstrategy} and \eqref{C5pstrategy} follow from the corresponding features of $\vec S$ and the $\vec S^{r^\tau}$'s. Finally, Clause~\eqref{C4pstrategy} can be proved similarly to \cite[Claim 6.16.2]{partI}, noting that if $\alpha>0$ then $\iota=0$. 
\end{proof}

Thus, we have established that $b:=(p',\vec{T})$ is a legitimate condition in $\mathbb{A}$, such that $\mtp(b)\leq n+\sup_{\xi<\chi} \mtp(g(r_\xi))$.

\medskip

The next series of claims take care of the rest of the lemma:

\begin{claim} Let $\tau<\theta$. For each $q\in W_n(s_\tau)$, $w(p',q)=w(s_\tau,q)=q$.
\end{claim}
\begin{proof}  The first equality can be proved exactly as in \cite[Claim 6.16.4]{partI}.
For the second, notice that $q$ and $w(s_\tau, q)$ are conditions in $W(s_\tau)$ with the same length. Hence, Fact~\ref{lemma7}(2) yields $q=w(s_\tau,q)$, as wanted.
\end{proof}

\begin{claim}\label{claim5123} $\pi(b)=p'$ and $b\unlhd^0 a$.
\end{claim}
\begin{proof}
The proof of this can be found in \cite[Claim 6.16.3]{partI}.
\end{proof}

\begin{claim}
For each $\tau<\theta$, $\fork{b}(s_\tau)\unlhd^0 g(r_{\xi_\tau})$.\footnote{Recall that $\langle s_\tau\mid \tau<\theta\rangle$ was a good enumeration of $W_n(p')$. }
\end{claim}
\begin{proof} Let $\tau<\theta$ and $\vec{T}^\tau$ be denote the $s_\tau$-strategy such that $\fork{b}(s_\tau)=(s_\tau,\vec{T}_\tau)$. 
By Corollary \ref{forkingindeed},  we have that  $\pi(\fork{b}(s_\tau))=s_\tau\le^0 p_{\xi_\tau}$. 
 
If $\xi_\tau<\iota$, then
 $\fork{b}(s_\tau)\unlhd^0 \myceil{p_{\xi_\tau}}{\mathbb{A}}=g(r_{\xi_\tau})$, and we are done.
 
 So, let us assume that $\iota\leq \xi_\tau$. Let $i\leq \alpha_{\xi_\tau}$ and $q\in W(s_\tau)$. By Definition~\ref{d45}\eqref{pitchfork}, $T^\tau_i(q)=T_i(w(p',q))$ and by one of the preceding claims, $w(p',q)=w(s_\tau,q)=q$, hence $T^\tau_i(q)=T_i(q)$. Also $r_{\xi_{\tau_q}}=w(p,s_{\tau_q})=w(p,s_\tau)=r_{\xi_\tau}$, where the second last equality follows from  $q\in W(s_\tau)$. Therefore, $$T^\tau_i(q)=S^{\xi_\tau}_{\min\{i,\alpha_{\xi_\tau}\}}(w(p_{\xi_\tau},q))=S^{\xi_\tau}_{i}(w(p_{\xi_\tau},q)).$$ Altogether, $\fork{b}(s_\tau)\unlhd^0 g(r_{\xi_\tau})$, as wanted.
\end{proof}
The above claims yield the proof of the lemma.
\end{proof}

Combining Lemmas \ref{forkingindeed} and \ref{mixinglemma} we arrive at:

\begin{cor}\label{forkingwithmixingindeed}
$(\pitchfork,\pi)$ is a forking projection from $(\mathbb{A},\ell_\mathbb{A},c_\mathbb{A})$ to $(\mathbb{P},\ell,c)$ having the weak  mixing property.\qed
\end{cor}

Now we take advantage of the preceding corollary to establish that $(\mathbb{A},\ell_\mathbb{A},c_\mathbb{A})$ is  $\Sigma$-Prikry and that $(\mathbb{A},\ell_\mathbb{A})$ has property $\mathcal{D}$. On this respect, note that the latter statement follows combining Corollary \ref{forkingwithmixingindeed},  Lemma~\ref{MixingLiftsPropertyD} and property $\mathcal{D}$ of $(\mathbb{P},\ell)$ (Setup~\ref{ReflectionAfterIteration}). For the former let us go over the clauses of Definition~\ref{SigmaPrikry}: Clauses \eqref{c4},\eqref{c1},\eqref{c5},\eqref{csize} and \eqref{itsaprojection} follow from lemmas 4.5,  4.7,  4.8 and 4.9 of \cite{partI}, respectively. Clause~\eqref{c6} follows combining  property $\mathcal{D}$ of $(\mathbb{P},\ell)$ with Corollary \ref{forkingwithmixingindeed} and Corollary \ref{PropertyDplusMixingYieldsCPP}. Also, by \cite[Corollary 4.13]{partI}, $\one_\mathbb{A}\forces_\mathbb{A} \check{\mu}=\check{\kappa}^+$. 
Finally, note that Clause~\eqref{c2} follows from Lemma~\ref{forkinganddirectedclosure} together with  Corollary~\ref{forkingwithmixingindeed} and Fact~\ref{C1ASigmaPrirky}.

Altogether, we arrive at the main result of this section:

\begin{cor}\label{onestep} Suppose:
\begin{itemize}
\item[(i)] $(\mathbb P,\ell,c)$ is a $\Sigma$-Prikry notion of forcing such that the pair $(\mathbb{P},\ell)$ has property $\mathcal{D}$;
\item[(ii)] $\one_{\mathbb P}\forces_{\mathbb{P}}\check\mu=\kappa^+$;
\item[(iii)] $\mathbb P=(P,\le)$ is a subset of $H_{\mu^+}$;
\item[(iv)] $r^\star\in P$ forces that $\dot T$ is a $\mathbb P$-name for some subset $T$ of $(E^\mu_\omega)^V$
such that, for all $\gamma<\mu$ with $\omega<\cf^V(\gamma)<\kappa$, $T\cap\gamma$ is nonstationary in $\gamma$.

\end{itemize}

Then, there exists a $\Sigma$-Prikry triple $(\mathbb A,\lh_{\mathbb A},c_{\mathbb A})$ such that $(\mathbb{A},\ell_{\mathbb{A}})$ has property $\mathcal{D}$ and for which the following are true:
\begin{enumerate}
\item $(\mathbb A,\lh_{\mathbb A},c_{\mathbb A})$ admits a forking projection $({\pitchfork},{\pi})$ to $(\mathbb P,\lh,c)$ that has the weak mixing property;
\item for each $n<\omega$, $\z{\mathbb{A}}^\pi_n$ is $\mu$-directed-closed;
\item $\one_{\mathbb A}\forces_{\mathbb A}\check\mu=\kappa^+$;
\item $\mathbb A=(A,\unlhd)$ is a subset of $H_{\mu^+}$;
\item $\myceil{r^\star}{\mathbb A}$ forces that $\dot T$ is nonstationary.
\end{enumerate}
\end{cor}
\begin{proof}
Item (1) and the assertion that $(\mathbb A,\lh_{\mathbb A},c_{\mathbb A})$ is $\Sigma$-Prikry and that $(\mathbb{A},\ell_{\mathbb{A}})$ has property $\mathcal{D}$   follow from our previous arguments.
Item (2) follows from Fact \ref{C1ASigmaPrirky} and items (3)--(5) already appeared in Fact \ref{FactPartI} (See also Remark~\ref{RemarkKillingT}).
\end{proof}

\subsection{Connecting the dots}\label{connecting}

\noindent For the rest of this section, we make the following assumptions:
\begin{itemize}
\item $\Sigma=\langle \kappa_n\mid n<\omega\rangle$ is an increasing sequence of Laver-indestructible supercompact cardinals;
\item $\kappa:=\sup_{n<\omega}\kappa_n$, $\mu:=\kappa^+$ and $\lambda:=\kappa^{++}$;
\item $2^\kappa=\kappa^+$ and $2^\mu=\mu^+$;
\item $\Gamma:=\{\gamma<\mu\mid \omega<\cf^V(\gamma)<\kappa\}$.
\end{itemize}

Under these assumptions, \cite[Corollary~5.11]{partI}
reads as follows:

\begin{fact}\label{c27}  If $(\mathbb P,\lh,c)$ is a $\Sigma$-Prikry notion of forcing
such that  $\one_{\mathbb P}\forces_{\mathbb{P}}\check\mu=\kappa^+$,
then  $V^{\mathbb P}\models \refl({<}\omega,\Gamma)$.
\end{fact}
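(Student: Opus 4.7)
The plan is to verify $V^{\mathbb P}\models \refl({<}\omega,\Gamma)$ via a $\mu$-supercompactness lifting argument adapted to the $\Sigma$-Prikry framework. Let $G$ be $\mathbb P$-generic and, working in $V[G]$, suppose $S_0,\ldots,S_{k-1}$ are stationary subsets of $\Gamma$ with $\sup(\{\cf^V(\alpha)\mid \alpha\in\bigcup_i S_i\})<\kappa_m$ for some $m<\omega$. Pick names $\dot S_0,\ldots,\dot S_{k-1}$ and a condition $p\in G$ forcing all of this; invoking Clause~\eqref{c2} of Definition~\ref{SigmaPrikry}, we may further arrange that $\lh(p)\ge n$ for an $n>m$ to be chosen below.

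Since each $\kappa_n$ is supercompact, fix in $V$ a $\mu$-supercompactness embedding $j:V\to M$ with $\crit(j)=\kappa_n$, $j(\kappa_n)>\mu$ and ${}^\mu M\subseteq M$. The main obstacle is to lift $j$ to an elementary embedding $j^+:V[G]\to M[G^*]$ for some $j(\mathbb P)$-generic $G^*$ over $M$ with $j[G]\subseteq G^*$. Since $\mathbb P$ itself is not $\kappa_n$-directed-closed (it forces $\cf(\kappa)=\omega$), this is not routine; however, the $\kappa_n$-directed-closure of the slice $\mathbb P_{\lh(p)}$, together with the Laver-indestructibility of $\kappa_n$, lets one form in $M$ a $j(\kappa_n)$-directed lower bound of $j[G\cap P_{\lh(p)}]$ (which belongs to $M$ by ${}^\mu M\subseteq M$); combining this with $j(p)$ and the CPP yields a master condition $q^*\unlhd j(p)$ in $j(\mathbb P)$, and a further generic extension below $q^*$ produces the desired $G^*$ and the lift $j^+$.

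Once $j^+$ is at hand, set $\delta:=\sup j[\mu]$; by ${}^\mu M\subseteq M$ and $j(\kappa_n)>\mu$, $\delta\in M$ with $\cf^M(\delta)=\mu$, and, since $\delta<j(\kappa)$ and $j(\mathbb P)$ adds no bounded subsets of $j(\kappa)$ (Fact~\ref{l14}\eqref{C1l14} applied inside $M$), $\cf^{M[G^*]}(\delta)=\mu>\omega$. For each $i<k$, a continuity argument shows that $j^+[S_i]$ is stationary in $\delta$ inside $M[G^*]$: any alleged club $C\subseteq\delta$ in $M[G^*]$ meets $j^+[\mu]$ unboundedly and its preimage under $j^+$ contains a club in $\mu$ (by continuity of $j^+$ at points of $V[G]$-cofinality ${<}\kappa_n$, which covers all of $\Gamma$), which in turn must meet $S_i$ by stationarity. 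Hence the single ordinal $\delta$ simultaneously reflects $j^+(S_0),\ldots,j^+(S_{k-1})$ in $M[G^*]$, and elementarity of $j^+$ returns an ordinal $\delta'\in E^\mu_{>\omega}$ in $V[G]$ reflecting $S_0,\ldots,S_{k-1}$ simultaneously, as required.
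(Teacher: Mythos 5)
Your overall plan — "use a $\mu$-supercompactness embedding with critical point above the forced cofinalities, reflect at $\delta=\sup j[\mu]$" — is the right intuition, but the execution has a genuine gap at the decisive step, namely the lifting of $j$ to $V[G]$.

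The specific problem is the parenthetical claim that $j[G\cap P_{\lh(p)}]$ \emph{belongs to $M$ by ${}^\mu M\subseteq M$}. This cannot be right. The set $G\cap P_{\lh(p)}$ is computed from the generic filter, hence lives in $V[G]\setminus V$; since $M$ is a subclass of $V$, no amount of closure of $M$ inside $V$ can place $j[G\cap P_{\lh(p)}]$ in $M$. (Moreover, even passing to closure of $M$ computed inside $V[G]$ is unavailable: a $\Sigma$-Prikry forcing such as EBPF adds many new $\kappa$-sequences, so $M$ need not remain closed under $<\mu$-sequences of $V[G]$; and $|G\cap P_{\lh(p)}|$ can be as large as $|P|=\mu^+$, exceeding the stated closure in any case.) The subsequent appeal to Laver-indestructibility and to the CPP to manufacture a master condition $q^*\unlhd j(p)$ is not explained and, as far as I can see, does not apply: Laver-indestructibility concerns preservation of supercompactness under $\kappa_n$-directed-closed forcing, not the existence of lower bounds in $j(\mathbb P)$, and the CPP only decides membership of $n$-step cones in a $0$-open set. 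Lifting a supercompactness embedding through a Prikry-type iteration is precisely the sort of thing that is \emph{not} routine, and you have not supplied the machinery to do it. Separately, the assertion $\delta<j(\kappa)$ is false — since $\kappa<\mu$, $\sup j[\mu]>j(\kappa)$ — though the intended conclusion $\cf^{M[G^*]}(\delta)>\omega$ could be rescued by observing instead that $\mu<j(\kappa)$ and that any new $\omega$-sequence cofinal in $\delta$ would yield a new bounded subset of $j(\kappa)$.

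The proof in Part~I (this statement is imported verbatim as Corollary~4.11 there) does not lift $j$ into the generic extension at all. It works entirely in $V$ with the $\mathbb P$-names $\dot S_i$, using the supercompactness embedding there to produce a reflection point of a set derived from the names and the $p$-tree $W(p)$ (which, crucially, has size $<\mu$), and then pushes that reflection point into $V[G]$ via the Complete Prikry Property and the fact that the forcing adds no bounded subsets of $\kappa$. That route sidesteps all of the obstacles above, which is exactly why the argument is organized around the $p$-tree and the CPP rather than around a lift of $j$. You should rework the proof along those lines: derive in $V$ a stationary set from the names and a fixed condition of large enough length $n$, reflect it using $j$ in $V$, and then argue via the CPP that the reflection survives the passage to $V[G]$.
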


We now want to appeal to the iteration scheme of the previous section. For this, we need to introduce our three building blocks of choice.

\blk{I} Let $\mathbb{Q}$ be  the Extender Based Prikry Forcing (EBPF)
for blowing up $2^\kappa$ to $\kappa^{++}$.
By results in \cite[Ch.10, \S2.5]{Pov}, this notion of forcing
can be regarded as a $\Sigma$-Prikry triple
$(\mathbb{Q},\ell,c)$ for which $(\mathbb{Q},\ell)$ has property $\mathcal{D}$,
$\mathbb Q$ is a subset of $H_{\mu^+}$, and $\one_{\mathbb Q}\forces_{\mathbb Q}\check\mu=\kappa^+$.
Furthermore, for each $n<\omega$, $\mathbb{Q}_n$ is $\kappa_n$-directed-closed, so we set $\z{\mathbb{Q}}_n:=\mathbb{Q}_n$.
Finally, as $\kappa$ is singular, $\one_{\mathbb Q}\forces_{\mathbb{Q}}``\kappa\text{ is singular}"$.

\blk{II} For every $\Sigma$-Prikry triple $(\mathbb P,\lh_{\mathbb P},c_{\mathbb P})$ having property $\mathcal{D}$
such that
$\mathbb P=\left(P,\le\right)$ is a subset of $H_{\mu^+}$ and
$\one_{\mathbb P}\forces_{\mathbb P}\check\mu=\kappa^+$,
every $r^\star\in P$, and every $\mathbb P$-name $z\in H_{\mu^+}$,
we obtain a corresponding $\Sigma$-Prikry triple $(\mathbb A,\lh_{\mathbb A},c_{\mathbb A})$ as follows:
\begin{itemize}
\item[$\br$] If $r^\star\in P$ forces that $z$ is a $\mathbb P$-name for a stationary subset of $(E^\mu_\omega)^V$
that does not reflect in $\Gamma$, 
then we first let $\sigma$ be a $\mathbb{P}$-\emph{nice name} for a subset of $(E^\mu_\omega)^V$ such that $\one_{\mathbb P}\forces_{\mathbb{P}}\sigma=z$. 
Setting $\dot T:=\{(\check{\alpha},p)\in\sigma\mid p\text{ is compatible with }r^\star\}$, we then get that $r^\star\forces_{\mathbb P}\dot T=z$.
Furthermore, $\one_{\mathbb P}$ forces that $\dot T$ is a $\mathbb P$-name for some subset $T$ of $(E^\mu_\omega)^V$
such that, for all $\gamma\in\Gamma$, $T\cap\gamma$ is nonstationary in $\gamma$.
We then obtain $(\mathbb A,\lh_{\mathbb A},c_{\mathbb A})$ by appealing to Corollary~\ref{onestep} with  
the $\Sigma$-Prikry triple $(\mathbb P,\lh_{\mathbb P},c_{\mathbb P})$,
the condition $\one_{\mathbb P}$
and the name $\dot T$.
Consequently, $\myceil{\one_{\mathbb P}}{\mathbb A}$ forces that $\dot T$ is nonstationary,
so that $\myceil{r^\star}{\mathbb A}$ forces that $z$ is nonstationary.

\item[$\br$] Otherwise, 
we invoke Corollary~\ref{onestep} 
with the $\Sigma$-Prikry triple $(\mathbb P,\lh_{\mathbb P},c_{\mathbb P})$,
the condition $\one_{\mathbb P}$ and the name $\dot T:=\emptyset$.
\end{itemize}
In either case, we get:
\begin{enumerate}[label=(\alph*)]
\item $(\mathbb A,\lh_{\mathbb A},c_{\mathbb A})$ admits a forking projection $(\pitchfork,\pi)$ to $(\mathbb P,\lh_{\mathbb P},c_{\mathbb P})$ that has the weak mixing property; 
\item  for each $n<\omega$, $\z{\mathbb{A}}^\pi_n$  is $\kappa_n$-directed-closed;\footnote{Recall Footnote~\ref{FootnoteBBII} on page~\pageref{FootnoteBBII}.}
\item  $\one_{\mathbb A}\forces_{\mathbb A}\check\mu=\kappa^+$;
\item $\mathbb A=(A,\unlhd)$ is a subset of $H_{\mu^+}$;
\item\label{C8onestep} if $r^\star$ forces that $z$ is a $\mathbb P$-name for a stationary subset of $(E^\mu_\omega)^V$
that does not reflect in $\Gamma$, then $\myceil{r^\star}{\mathbb A}$ forces that $z$ is nonstationary.
\end{enumerate}

\blk{III} As $2^\mu=\mu^+$, we fix a surjection $\psi:\mu^+\rightarrow H_{\mu^+}$ such that the preimage of any singleton is cofinal in $\mu^+$.

\bigskip

We would like now to appeal to the iteration scheme of Section~\ref{Iteration} with these building blocks.
However, Lemma~\ref{CivIteration} partially bears on the extra assumption that for all $\delta\in\acc(\mu^++1)$ and $n<\omega$, $(\z{\mathbb{P}}_\delta)_n$ is dense in $(\mathbb{P}_\delta)_n$. Our next task will be checking that the iteration defined using the previous building blocks has this feature. Once we are done  we will prove Theorem \ref{TheoremReflection}, which yields the very first application of our iteration scheme.

\begin{definition}\label{def424} For every nonzero $\beta<\mu$, 
as $(\mathbb P_{\beta+1},\lh_{\beta+1},c_{\beta+1})$ is obtained from $(\mathbb P_{\beta},\lh_{\beta},c_{\beta})$ by invoking the above-mentioned \blkref{II},
we shall denote by $\langle\dot{C}^\beta_n\mid n<\omega\rangle$, and $R_\beta$ the corresponding objects appearing 
before Definition~\ref{labeled-p-tree} and involved in defining  $\mathbb P_{\beta+1}$.
In particular, $R_\beta\s \mu\times P_\beta$. 
We shall denote by $(\dot{T}^+)^\beta$ the $\mathbb{P}_\beta$-name defined after Remark~\ref{additionalfeatureofT+}, that is,
$$(\dot{T}^+)^\beta:=\{(\check{\alpha},p)\mid (\alpha,p)\in (E^\mu_\omega)^{V}\times P_\beta\;\&\; (p\forces_{(\mathbb{P}_\beta)_{\ell(p)}}\check{\alpha}\notin \dot{C}^\beta_{\ell(p)})\}.$$
As in Definition~\ref{tauns}, for each $n<\omega$, we put $\tau^\beta_n:=\{(\check{\alpha},p)\in(\dot{T}^+)^\beta\mid \alpha\in (E^\mu_\omega)^V\ \&\ p\in  (P_{\beta})_n\}$.
\end{definition}
\begin{remark}\label{RemarkAdditionalFeature}
By Lemma~\ref{ThekeylemmaaboutT+}, the definition of the $\mathbb{P}_\beta$-name  $(\dot{T}^+)^{\beta}$ entails that for every $p\in P_\beta$, setting $n:=\ell(p)$, 
$p\forces_{\mathbb{P}_n}\tau^{\beta}_{n}=(\check{E}^\mu_\omega\setminus \dot{C}^\beta_n)$.
This will be crucially used in the verification of Clause~\eqref{C4pstrategy} of Definition~\ref{labeled-p-tree},
towards the end of the proof of Claim~\ref{claim4243}.
\end{remark}

\begin{definition} Let $\delta\in[2,\mu^+]$ and $a\in P_\delta$. 
\begin{itemize}
\item For every nonzero $\beta<\delta$, if $\beta+1\in B_a$, 
then $(a\restriction\beta+1)=(a\restriction \beta){}^\smallfrown\langle\vec{S}\rangle$ for some nonempty sequence $\vec S$,
so we denote this sequence by $\vec S^{a,\beta}=\langle S_i^{a,\beta}\mid i\le \alpha^{a,\beta}\rangle$;
\item For every nonzero $\beta<\delta$ such that $\beta+1\in B_a$, 
and every $q\in W(a\restriction\beta)$, we let $\sigma^{a,\beta}(q):=\max(\{0\}\cup S^{a,\beta}_{\alpha^{a,\beta}}(q))$;
\item Let $\sigma^a:=\sup\{\sigma^{a,\beta}(q)\mid \beta+1\in B_a\setminus2\ \&\ q\in W(a\restriction\beta)\}$. 
\end{itemize}
\end{definition}

\begin{lemma}\label{Remarkonsigmas}
Let $\delta\in[2,\mu^+]$, $a\in P_\delta$ and $\beta+1\in B_a\setminus2$.  
\begin{enumerate}
\item\label{Remarkonsigmas1} $\sigma^{a,\beta}:W(a\restriction\beta)\rightarrow\mu$ is order-reversing;
In particular, for all $q\in W(a\restriction\beta)$, $\sigma^{a,\beta}(a\restriction\beta)\le \sigma^{a,\beta}(q)\le\sigma^a$;
\item\label{Remarkonsigmas2}   For all $r\le_\beta a\restriction\beta$ and $s\in W(r)$, 
letting $b:=\fork[\beta+1,\beta]{a\restriction(\beta+1)}(r)$,
$$\sigma^{b,\beta}(s)=\sigma^{a,\beta}(w(a\restriction\beta,s));$$
\item\label{Remarkonsigmas3}  For all $b\le_{\beta+1} a\restriction(\beta+1)$ and $q\in W(b\restriction \beta)$, 
$$\sigma^{b,\beta}(q)\ge \sigma^{a,\beta}(w({a\restriction\beta},q)).$$
\end{enumerate}
\end{lemma}
\begin{proof} (1) By Definition~\ref{labeled-p-tree}\eqref{C2ptree}.

(2) By Definition~\ref{d45}\eqref{pitchfork}.

(3) By Definition~\ref{strategy}\eqref{C3pstrategy} and Definition~\ref{d45}.
\end{proof}

\begin{lemma}\label{ringisdense}
For all $\delta\in[2,\mu^+]$, $n<\omega$, and $\epsilon<\mu$,
$$D^\epsilon_{\delta,n}:=\{b\in (\z{P}_\delta)_n\mid \forall\beta+1\in B_{b}\setminus2\,[\epsilon<\sigma^{b,\beta}(b\restriction\beta)]\}$$
is dense in $(\mathbb{P}_\delta)_{n}$. 

In particular, for all $\delta\in[2,\mu^+]$ and $n<\omega$,  $(\z{\mathbb{P}}_\delta)_n$ is dense in $(\mathbb{P}_\delta)_n$.
\end{lemma}
\begin{proof}
By induction on $\delta$. 
Suppose  that we are given $\delta\in[2,\mu^+]$ such that for all $\gamma\in[2,\delta)$, $n<\omega$ and $\epsilon<\mu$,
$D^\epsilon_{\gamma,n}$ is dense in $(\mathbb{P}_\gamma)_{n}$. 

\medskip
 
\underline{Case~1:} Suppose that  $\delta=\beta+1$ is a successor ordinal.
Let $a\in P_\delta$ and $\epsilon<\mu$ be arbitrary. Denote $n:=\lh_\delta(a)$.
Appealing to Claim~\ref{claim244}, find $a'\le^0_\beta(a\restriction\beta)$ and $\alpha>\max\{\epsilon, \sigma^a\}$ with $(\alpha,a')\in R_{\beta}$. 

If $\beta=1$, then set $b':=a'$; otherwise, 
appeal to the inductive hypothesis to pick $b'\in D_{\beta,n}^{\epsilon}$ extending $a'$. 
In either case, $(\alpha,b')\in R_\beta$.  

If $\beta+1\notin B_a$, then we are done by setting $b:=b'*\emptyset_\delta$, so suppose 
that $\beta+1\in B_a$. In particular, $a=(a\restriction\beta){}^\smallfrown\langle\langle S_i^{a,\beta}\mid i\le \alpha^{a,\beta}\rangle\rangle$.
Now, let $b:=b'{}^\smallfrown\langle\langle S_i\mid i\leq \alpha^{a,\beta}+1\rangle\rangle$, where for all $i\leq \alpha^{a,\beta}+1$ and $q\in W(b')$, $S_i(q)$ is defined as follows: 
$$S_i(q):=\begin{cases}
S_i^{a,\beta}(w(a\restriction\beta, q)),& \text{if $i\le\alpha^{a,\beta}$;}\\
S_i^{a,\beta}(w(a\restriction\beta, q))\cup\{\alpha\}, & \text{otherwise.}
\end{cases}
$$

As $(\alpha,b\restriction\beta)=(\alpha,b')\in R_\beta$, we infer that $b\in P_\delta$ and $\mtp_{\beta+1}(b)=0$. 
Thus, since $b'\in \z{P}_\beta$,  it follows that $b\in\z{P}_\delta$. Finally, since $B_b=B_{b'}\cup \{\beta+1\}$, our choice of $b'$ implies that $b$ is an element of $D_{\delta,n}^\epsilon$
extending $a$.
\medskip

\underline{Case~2:}  Suppose that $\cf(\delta)>\kappa$.
Let $a\in P_\delta$ and $\epsilon<\mu$ be arbitrary. Denote $n:=\lh_\delta(a)$.
Then $B_a$ is bounded in $\delta$.
Fix $\gamma<\delta$ such that $a=(a\restriction\gamma)\ast \emptyset_\delta$. 
By the inductive hypothesis, we find $a'\in D_{\gamma,n}^\epsilon$ extending $a\restriction\gamma$.
Set $b:=a'\ast \emptyset_\delta$, so that $B_b=B_{a'}$.
Then $b\in D_{\delta,n}^\epsilon$ extends $a$, as desired.

\medskip

\underline{Case~3:}  Suppose that $1<\cf(\delta)\le\kappa$.
As $\kappa$ is the limit of the strictly increasing sequence $\langle \kappa_n\mid n<\omega\rangle$ (recall the opening of Subsection~\ref{connecting}),
we may let $m<\omega$ be the least such that $\cf(\delta)<\kappa_m$. 

\begin{claim}\label{towardsdagger}
For all $n\ge m$ and $\epsilon<\mu$, $D_{\delta,n}^\epsilon$ is dense in $(\mathbb P_\delta)_n$.
\end{claim}
\begin{proof} Let $\epsilon<\mu$ and let $a\in P_\delta$ be such that $n:=\lh_\delta(a)$ is $\ge m$.
By the proof of Case~2, we may assume that $B_a$ is unbounded in $\delta$. 
Let $\langle \gamma_\tau\mid \tau<\cf(\delta)\rangle$ be the increasing enumeration of a small cofinal subset of $B_a\setminus2$,
and set $\gamma_{\cf(\delta)}:=\delta$.
We now construct a sequence of conditions $\langle b_\tau\mid \tau\le \cf(\delta)\rangle\in\prod_{\tau\le\cf(\delta)}D^\epsilon_{\gamma_\tau,n}$ in such a way that, 
for all $\eta<\tau\le\cf(\delta)$, $b_\eta\le^0_{\gamma_\eta} a\restriction \gamma_\eta$ and $b_\eta= b_\tau\restriction \gamma_\eta$.
The construction is by recursion on $\tau\le\cf(\delta)$, as follows:

$\br$ For $\tau=0$, use the induction hypothesis to find $b_0\le^0_{\gamma_0} a\restriction\gamma_0$ in $D^\epsilon_{\gamma_0,n}$. 

$\br$ For every $\tau<\cf(\delta)$ such that $b_\tau$ has already been defined,
use the induction hypothesis to find $b_{\tau+1}\le^0_{\gamma_{\tau+1}} \fork[\gamma_{\tau+1},\gamma_\tau]{a\restriction \gamma_{\tau+1}}(b_{\tau})$ in $D^\epsilon_{\gamma_{\tau+1},n}$.

$\br$ For every $\tau\in\acc(\cf(\delta)+1)$ such that $\langle b_\eta\mid \eta<\tau\rangle$ has already been defined,
we get from the induction hypothesis together with Clause~\eqref{type6} of Definition~\ref{type}
that $\langle b_\eta*\emptyset_{\gamma_\tau}\mid \eta<\tau\rangle$ is a $\le^0_{\gamma_\tau}$-decreasing sequence in $(\z{\mathbb{P}}_{\gamma_\tau})_{n}$. 
Thus, by Lemma~\ref{Morethanclosure}, we may find a lower bound $b_\tau$ in $(\z{\mathbb{P}}_{\gamma_\tau})_{n}$ such that $B_{b_\tau}=\bigcup_{\eta<\tau} B_{b_\eta}$.
Consequently, $b_\tau\in D^\epsilon_{\gamma_\tau,n}$.

At the end of the above process, we have obtained $b_{\cf(\delta)}$ which is an element of $D^\epsilon_{\delta,n}$ extending $a$,
as desired.
\end{proof}

For each $n<\omega$, let us say that $\dagger_{\delta,n}$ holds iff, for all $\epsilon<\mu$, $D_{\delta,n}^\epsilon$ is dense in $(\mathbb P_\delta)_n$.
By Claim~\ref{towardsdagger}, $\dagger_{\delta,n}$ holds  for all $n\ge m$.
In particular, if  $m=0$, then we are done. 
To address the general case, 
we now assume that we are given $n<\omega$ such that $\dagger_{\delta,n+1}$ holds, and we shall prove that $\dagger_{\delta,n}$ holds, as well.

Let $\epsilon<\mu$, and let $a\in P_\delta$ with $\lh_\delta(a)=n$;
we need to find a condition $b\in D_{\delta,n}^\epsilon$ extending $a$. As a first step, we prove the following claim.
\begin{claim} There exists a $\le^0_\delta$-decreasing sequence of conditions $\langle a_j\mid j<\omega\rangle$ and an increasing sequence of ordinals $\langle\epsilon_j\mid j<\omega\rangle$  such that, for all $j<\omega$ and $\beta+1\in B_{a_j}\setminus2$, 
the following three hold:
\begin{enumerate}[label=(\Roman*)]
\item \label{largerthandelta} For every $q\in W_{1}(a_{j+1}\restriction\beta)$,  $\sigma^{a_j}\leq \epsilon_j<\sigma^{a_{j+1},\beta}(q)$;
\item \label{tryingtofallintothering} For every $q\in W_{\geq 1}(a_{j+1})$, $q\restriction\beta\forces_{(\mathbb{P}_\beta)_{\ell_\delta(q)}}\dot{C}^\beta_{\ell_\delta(q)}\cap (\check{\epsilon}_j,\check{\epsilon}_{j+1})\neq\emptyset$;
\item\label{point} $a_j\le^0_\delta a$ and $\epsilon_j>\epsilon$.
\end{enumerate}
\end{claim}
\begin{proof}
The construction is by recursion on $j<\omega$.
We start by setting $(a_0,\epsilon_0):=(a,\max\{\sigma^a,\epsilon+1\})$. This will take care of Clause~\ref{point}.
Next, suppose that $j<\omega$ and that the pair $(a_j,\epsilon_j)$ has already been successfully defined.
Since $\dagger_{\delta,n+1}$ holds,  $D_{\delta,n+1}^{\epsilon_j}$
is a set of conditions in $(\z{\mathbb{P}}_\delta)_{n+1}$
which is dense in $(\mathbb{P}_\delta)_{n+1}$. 
Let $\vec{s}=\langle s_\xi\mid \xi<\chi\rangle$ be a good enumeration of $W_1(a_j)$.
By Lemma~\ref{lemmapropertyDforlimitMoreOver}, 
we may now use the winning strategy of $\pI$ in playing the game $\Game_{\mathbb P_\delta}(a_j,\vec{s},D_{\delta,n+1}^{\epsilon_j})$,
thus obtaining a sequence $\langle b_\xi\mid \xi<\chi\rangle$ of conditions in $D_{\delta,n+1}^{\epsilon_j}$ along with a condition $a_{j+1}\le^0_\delta a_j$ such that $a_{j+1}$ diagonalizes $\langle b_\xi\mid \xi<\chi\rangle$ with respect to $\vec{s}$. 
Set $\epsilon_{j+1}:=\sup_{\xi<\chi}\sigma^{b_\xi}$ and note that $\epsilon_{j+1}<\mu$.
We now verify Clauses \ref{largerthandelta} and \ref{tryingtofallintothering}. For this, fix an arbitrary $\beta$ with $\beta+1\in B_{a_j}\setminus 2$.
\begin{enumerate}[label=(\Roman*)]
\item Let $q\in W_{1}(a_{j+1}\restriction\beta)$.
As $c:=\fork[\delta,\beta]{a_{j+1}}(q)$ is in $W_1(a_{j+1})$, we may let $\xi<\chi$ be the unique ordinal to satisfy $c\le^0_\delta b_\xi$. 
Since $b_\xi\le_\delta a_j$, $\beta+1\in B_{a_j}\s B_{b_\xi}$. Also, $c\restriction(\beta+1)\le^0_{\beta+1}b_\xi\restriction(\beta+1)$ and $c\restriction\beta=q$, so $w(b_\xi\restriction \beta, c\restriction\beta)=w(b_\xi\restriction\beta, q)=b_\xi\restriction\beta$. Thus, Clauses~\eqref{Remarkonsigmas2} and \eqref{Remarkonsigmas3} of Lemma~\ref{Remarkonsigmas} together yield
$$\epsilon_j<\sigma^{b_\xi,\beta}(b_\xi\restriction\beta)\leq \sigma^{c,\beta}(q)=\sigma^{a_{j+1},\beta}(w(a_{j+1}\restriction\beta, q)).$$
Since $q\in W(a_{j+1}\restriction\beta)$, the latter is equal to $\sigma^{a_{j+1},\beta}(q)$.
Finally, since $b_0\le_{\delta} a_j$, Lemma~\ref{Remarkonsigmas}\eqref{Remarkonsigmas3} yields $\sigma^{a_j}\leq \sigma^{b_0}\leq \epsilon_j$.

\item Let $q\in W_{\geq 1}(a_{j+1})$.
Appealing to Fact~\ref{lemma7},  let $\bar{q}$ be the unique member of $W_1(a_{j+1})$ such that $q\le_{\delta} \bar{q}$. 
Now, since $a_{j+1}$ diagonalizes $\langle b_\xi\mid \xi<\chi \rangle$,  we may let $\xi<\chi$ be such that  $\bar{q}\le^0_\delta  b_\xi$. 
As $b_\xi\le_\delta a_j$, $\beta+1\in B_{b_\xi}\setminus 2$.
Also, since $b_\xi\in D^{\epsilon_j}_{\delta,n+1}\s \z{P}_{\delta}$, Lemma~\ref{ringcoheres} yields $(b_\xi\restriction\beta+1)\in\z{P}_{\beta+1}$. 
In particular,  $\mtp_{\beta+1}(b_\xi\restriction\beta+1)=0$. Equivalently,\footnote{See Definition~\ref{labeled-p-tree}\eqref{C4ptree} and Definition~\ref{defntypes}.} 
$$(\sigma^{b_\xi, \beta}(q'),b_\xi\restriction\beta)\in R_{\beta}\;\; \text{for all }q'\in W(b_\xi\restriction\beta).$$ 
Since $q\restriction\beta \le_\beta\bar{q}\restriction\beta\le^0_{\beta} b_\xi\restriction\beta$, $(\sigma^{b_\xi, \beta}(b_\xi\restriction\beta), q\restriction\beta)\in R_\beta$. 
Therefore,   
$$q\restriction\beta \forces_{(\mathbb{P}_\beta)_{\ell_\delta(q)}} \sigma^{b_\xi, \beta}(b_\xi\restriction\beta)\in \dot{C}^\beta_{\ell_\delta(q)}.$$  
Finally, as $\epsilon_j<\sigma^{b_\xi, \beta}(b_\xi\restriction\beta)\leq \sup_{\xi<\chi}\sigma^{b_\xi}<\epsilon_{j+1}$, the conclusion follows.\qedhere
\end{enumerate}
\end{proof}

Let $\langle(a_j,\epsilon_j)\mid j<\omega\rangle$ be given by the preceding claim.
Set  $\epsilon_\omega:=\sup_{j<\omega}\epsilon_j$.
Our second step is to construct a condition $b\in\z{P}_{\delta}$ such that $b\le^0_\delta a_j$ for all $j<\omega$,
and such that $\sigma^{b,\beta}(b\restriction\beta)=\epsilon_\omega$ for all $\beta$ with $\beta+1\in B_b\setminus2$.
By Clause~\ref{point}, $b$ will satisfy $b\in D^{\epsilon}_{\delta,n}$ and $b\le^0_\delta a$.

Here we go.
Let $\langle \gamma_\tau\mid \tau<\theta\rangle$ be  the increasing enumeration    of $\bigcup_{j<\omega}B_{a_j}\setminus 2$.
For each $\tau<\theta$, denote by $\beta_\tau$ the predecessor of $\gamma_\tau$.
The sought condition will be obtained as the limit  $b:=(\bigcup_{\tau<\theta} b_\tau)*\emptyset_\delta$ of a sequence of conditions  $\langle b_\tau\mid \tau<\theta\rangle$  such that, for every $\tau<\theta$, all of the following will hold:
\begin{enumerate}[label=(\alph*)]
\item\label{Clause1Induction} $b_\tau\in(\z{P}_{\gamma_\tau})$ and $B_{b_\tau}\setminus 2=\{\gamma_\varrho\mid \varrho\leq \tau\}$; 
\item\label{Clause4Induction} $\sigma^{b_\tau,\beta}(b_\tau\restriction\beta)=\epsilon_\omega$ for all $\beta+1\in B_{b_\tau}$;
\item\label{Clause2Induction} $b_\tau\le_{\gamma_\tau}^0 a_j\restriction\gamma_\tau$ for all $j<\omega$;
\item\label{Clause3Induction} $b_\tau\restriction\gamma_\varrho=b_\varrho$ for all $\varrho\leq \tau$.
\end{enumerate}

We now turn to the recursive construction of the sequence $\langle b_\tau\mid \tau<\theta\rangle$,
starting with the case $\tau:=0$.
Recall that by our \blkref{I}, $(\mathbb{P}_1)_n=(\z{\mathbb{P}}_1)_n$.
So $\langle a_j\restriction 1\mid n<\omega\rangle$ is a decreasing sequence of conditions  in $(\z{\mathbb{P}}_1)_n$,
and hence  Definition~\ref{SigmaPrikry}\eqref{c2} yields a $\le^0_1$-lower bound $p\in (P_1)_n$ for it.
For each $j<\omega$, set $$c_j:=\fork[\gamma_0,\beta_0]{a_j\restriction\gamma_0}(\myceil{p}{\mathbb{P}_{\beta_0}}).$$ 
Since $\myceil{p}{\mathbb{P}_{\beta_0}}\le^0_{\beta_0} \myceil{a_j\restriction 1}{\mathbb{P}_{\beta_0}}$, it follows that $c_j\le^0_{\gamma_0}(a_j\restriction\gamma_0)$ for each $j<\omega$.   
By Clauses~\eqref{C5LemmaOfForking} and \eqref{Cviiforking} of Lemma~\ref{CviIteration}, the sequence $\langle c_j\mid j<\omega\rangle$ is $\le_{\gamma_0}^{\pi_{\gamma_0,\beta_0}}$-decreasing,
hence --- as in the proof of \cite[Lemma~6.15]{partI} --- it is order-isomorphic to $(\omega, \ni)$.

Let $j_0<\omega$ denote the least index such that $\gamma_0\in B_{a_{j}}$ for every $j\geq j_0$. For each $j\geq j_0$, put $\vec{R}^j:=\vec{S}^{c_j,\beta_0}$ so that $c_j=\myceil{p}{\mathbb{P}_{\beta_0}}{}^\smallfrown\langle\vec{R}^j\rangle$. By  Definition~\ref{d45}\eqref{pitchfork},
$\dom(\vec{R}^j)=\dom(\vec{S}^{a_j,\beta_0})=\alpha^{a_j,\beta_0}+1$, and
\begin{equation}\label{correspondenceSj}
\tag{$\star$}R^{j}_i(q)=S^{a_j,\beta_0}_i(w(a_j\restriction \beta_0,q))\;\text{for all }i\leq \alpha^{a_j,\beta_0}\text{ and }q\in W(\myceil{p}{\mathbb{P}_{\beta_0}}).
\end{equation}

Set $\alpha_j:=\alpha^{a_j,\beta_0}$ for each $j\geq j_0$. 
We now define $b_0:=\myceil{p}{\mathbb{P}_{\beta_0}}{}^\smallfrown\langle\vec{S}\rangle$, where $\vec{S}$ is the sequence $\langle S_i\mid i\leq \alpha+1\rangle$ 
with $\alpha:=\sup_{j_0\leq j<\omega}\alpha_j$, defined according to the following casuistic:

$\br$ For $i<\alpha$, $S_i(q)$ is defined as the unique member of $$\{R^{j}_i(q)\mid j\geq j_0,\, \alpha_j\ge i\}.$$

$\br$ For $i=\alpha$, we distinguish several cases:

$\br\br$ If $\alpha=\alpha_j$ for some $j\geq j_0$, then $S_\alpha(q):= R^j_{\alpha_j}(q)$ for the least such $j$;

$\br\br$ If $S_i(q)=\emptyset$ for all $i<\alpha$, then we continue and let $S_{\alpha}(q):=\emptyset$;

$\br\br$ Otherwise, set  $S_\alpha(q):=\bigcup_{i<\alpha} S_i(q)\cup \{\varepsilon_q\}$, where
$$\varepsilon_q:=\sup\{\max(S_{i}(q))\mid i<\alpha,\, S_i(q)\neq\emptyset\}.$$

A moment's reflection makes it clear that 
\begin{equation}\label{blackbox}
\tag{$\boxtimes$}\varepsilon_q=\sup\{\max(R^j_{\alpha_j}(q))\mid j\geq j_0,\, R^j_{\alpha_j}(q)\neq\emptyset\}.
\end{equation}

$\br$ For $i=\alpha+1$, let $S_{\alpha+1}(q):=S_\alpha(q)\cup\{\epsilon_\omega\}$.

\begin{claim}\label{claim4243} $\vec{S}$ is a $\myceil{p}{\mathbb{P}_{\beta_0}}$-strategy,
and hence $b_0\in P_{\gamma_0}$.
In addition, $\mtp_{\gamma_0}(b_0)=m(S_{\alpha+1})=0$.
\end{claim}
\begin{proof}
Since for each $j<\omega$, $\vec{R}^{j}$ is a $\myceil{p}{\mathbb{P}_{\beta_0}}$-strategy,
we just need to verify that $S_\alpha$ and $S_{\alpha+1}$ are both labeled $\myceil{p}{\mathbb{P}_{\beta_0}}$-trees. Actually, since $\langle S_i(q)\mid i<\alpha\rangle$ is a weakly $\sq$-increasing sequence of closed sets it is enough to check Clauses~\eqref{C3ptree} and \eqref{C4ptree} of Definition~\ref{labeled-p-tree}.  We commence with checking this for $S_\alpha$. The proof for $S_{\alpha+1}$ will be straightforward once we are done with that for $S_\alpha$. 

\medskip

\underline{Clause~\eqref{C3ptree} for $S_\alpha$:} Let $q\in W(\myceil{p}{\mathbb{P}_{\beta_0}})$ and to avoid trivialities assume that $S_\alpha(q)\neq \emptyset$. For each $j\geq j_0$, Clause~\eqref{C3ptree} for  $\vec{R}^{j}$ yields  $$q\forces_{\mathbb{P}_{\gamma_0}}R^j_{\alpha_j}(q)\cap {(\dot{T}^+)}^{\beta_0}=\emptyset,$$
hence it is enough to address the case where $S_\alpha(q)=(\bigcup_{i<\alpha} S_i(q))\cup\{\varepsilon_q\}$. To establish the clause it will be enough to show that  $q\forces_{\mathbb{P}_{\gamma_0}}\varepsilon_q\notin (\dot{T}^+)^{\beta_0}$.   

Set $\vec{\sigma}:=\langle \sigma_j\mid j\geq j_0\rangle$, where $\sigma_j:=\sigma^{c_{j}, \beta_0}(\myceil{p}{\mathbb{P}_{\beta_0}})$ for each $j\geq j_0$. 
\begin{subclaim}
If $q=\myceil{p}{\mathbb{P}_{\beta_0}}$ and $\vec{\sigma}$ is eventually constant,
then $\varepsilon_q=\max(\rng(\vec{\sigma}))$. Otherwise, $\varepsilon_q=\epsilon_\omega$.
\end{subclaim}
\begin{proof} $\br$ Suppose that $q=\myceil{p}{\mathbb{P}_{\beta_0}}$ and that $\vec{\sigma}$ is eventually constant.
By the definition of $\vec{S}$, for each $j\geq j_0$,  
$$\max(S_{\alpha_j}(\myceil{p}{\mathbb{P}_{\beta_0}}))=\max(R^j_{\alpha_j}(\myceil{p}{\mathbb{P}_{\beta_0}}))=\sigma_j.$$
Also, since $\vec{\sigma}$ is eventually constant there is $j^\star\geq j_0$ such that 
$$\max(R^j_{\alpha_j}(\myceil{p}{\mathbb{P}_{\beta_0}}))=\sigma_{j^\star}\;\;\text{for all $j\geq j^\star$}.$$
So, for each $j\geq j^\star$,  Clause~\eqref{C3pstrategy} of Definition~\ref{strategy} for $\vec{R}^{j+1}$ yields
$$S_{\alpha_j}(\myceil{p}{\mathbb{P}_{\beta_0}})=S_{\alpha_{j+1}}(\myceil{p}{\mathbb{P}_{\beta_0}}).$$ 
Thus, 
$S_\alpha(\myceil{p}{\mathbb{P}_{\beta_0}})=S_{\alpha_{j^\star}}(\myceil{p}{\mathbb{P}_{\beta_0}})$. 
Therefore,   $\varepsilon_{q}=\sigma_{j^\star}=\max(\rng(\vec{\sigma}))$.

\medskip

$\br$ Suppose that $q\in W_{\geq 1}(\myceil{p}{\mathbb{P}_{\beta_0}})$. 
Using Fact~\ref{factPartI},  let $\bar{q}\in W_{ 1}(\myceil{p}{\mathbb{P}_{\beta_0}})$ be such that $q\le_{\beta_0}\bar{q}$.  
Combining \eqref{correspondenceSj} with Clause~\ref{largerthandelta}, we have 
$$\max(R^j_{\alpha_j}(q))\leq \sigma^{a_{j}}< \epsilon_{j+1}\;\;\text{for all }j\geq j_0.$$
 
On the other hand, by Clause~\ref{largerthandelta} and Lemma~\ref{Remarkonsigmas}\eqref{Remarkonsigmas1},
 $$\epsilon_j< \sigma^{a_{j+1},\beta_0}(w(a_{j+1}\restriction\beta_0, \bar{q}))\leq \sigma^{a_{j+1},\beta_0}(w(a_{j+1}\restriction\beta_0, q)).$$
 Also, recalling how $c_{j+1}$ was defined, Lemma~\ref{Remarkonsigmas}\eqref{Remarkonsigmas2} yields
 $$\sigma^{a_{j+1},\beta_0}(w(a_{j+1}\restriction\beta_0, q))=\sigma^{c_{j+1},\beta_0}(q)=\max(R^{j+1}_{\alpha_{j+1}}(q)).$$
Thus,  $\epsilon_{j}< \max(R^{j+1}_{\alpha_{j+1}}(q))$. Combining the above we infer that $\varepsilon_q=\epsilon_\omega$. 

\medskip

$\br$  Suppose that $q=\myceil{p}{\mathbb{P}_{\beta_0}}$, but $\vec{\sigma}$ is not eventually constant.
Fix a nonzero $j\geq j_0$  such that $\sigma_j<\sigma_{j+1}$. Our first task is to prove that 
\begin{equation}\label{starstar}
\tag{$\star\star$}\sigma^{a_{j}, \beta_0}(w(a_{j}\restriction\beta_0, r))<\sigma_{j+1}\text{ for all }r\in W_1(a_{j+1}\restriction\beta_0).
\end{equation}

Note that by equation~\eqref{correspondenceSj} above, 
$$\max(R^{j}_{\alpha_j}(r))=\max(S^{a_j, \beta_0}_{\alpha_j}(w(a_j\restriction\beta_0,r)))=\sigma^{a_{j},\beta_0}(w(a_j\restriction\beta_0, r)),$$
hence all we need to prove is that $\sigma_{j+1}>\max(R^j_{\alpha_j}(r))$. Once we establish  \eqref{starstar}, we will be able to show that $\varepsilon_q=\epsilon_\omega$.

So, fix $r\in W_1(a_{j+1}\restriction\beta_0)$ and let us look at the sequence $$\langle \max(R^{j+1}_i(q))\mid \alpha_j\leq i\leq \alpha_{j+1}\rangle.$$
By Definition~\ref{strategy}\eqref{C3ptree} for $\vec{R}^{j+1}$, the above sequence is weakly increasing. Also, note that the first value of this sequence is $\sigma_j$ and the last one is $\sigma_{j+1}$.

Let $i^\star\leq \alpha_{j+1}$ be the first index such that $\max(R^{j+1}_i(q))=\sigma_{j+1}$ for all $i\in[ i^\star, \alpha^{a_{j+1},\beta_0}]$.  Since
by assumption $\sigma_j<\sigma_{j+1}$ note that  $\alpha_j<i^\star$.

$\br\br$ If $i^\star$ takes the form $k+1$, then Definition~\ref{strategy}\eqref{C4pstrategy} for $\vec{R}^{j+1}$ 
yields
$$R^{j+1}_{i^\star}(q)\setminus R^{j+1}_{k}(q)\sq R^{j+1}_{i^\star}(r)\setminus R^{j+1}_{k}(r).$$
By minimality of $i^\star$, $\sigma_{j+1}=\max(R^{j+1}_{i^\star}(q))>\max(R^{j+1}_k(q))$. In particular, $\sigma_{j+1}$ is a member of the left-hand-side of the above expression and thus $\sigma_{j+1}>\max(R^{j+1}_k(r))$. Since $\alpha_j<i^\star$, then $\alpha_j\leq k$ and so $$\sigma_{j+1}>\max(R^{j+1}_{k}(r))\geq \max(R^{j+1}_{\alpha_j}(r))=\max(R^{j}_{\alpha_j}(r)).$$

$\br\br$ If $i^\star$ is a limit then Clause~\eqref{C5pstrategy} of Definition~\ref{strategy} yields 
$$R^{j+1}_{i^\star}(q)=\bigcup_{k<i^\star} R^{j+1}_{k}(q)\cup\{\sigma_{j+1}\}.$$
By minimality of $i^\star$, unboundedly many $k\in (\alpha_j, i^\star)$ must satisfy
$$\max(R^{j+1}_k(q))<\max(R^{j+1}_{k+1}(q))<\sigma_{j+1}.$$
Thus, again by  Clauses~\eqref{C3pstrategy} and \eqref{C4pstrategy}  of Definition~\ref{strategy} for $\vec{R}^{j+1}$, $$\max(R^{j}_{\alpha_j}(r))=\max(R^{j+1}_{\alpha_j}(r))\leq \max(R^{j+1}_{k}(r))<\max(R^{j+1}_{k+1}(q))< \sigma_{j+1}.$$

The above discussion yields \eqref{starstar}. 
Now, combining \eqref{starstar} with  Clause~\ref{largerthandelta} for $a_j$ we get that
$$\epsilon_{j-1}<\sigma^{a_{j},\beta_0}(w(a_{j}\restriction\beta_0,r))<\sigma_{j+1}.$$

On the other hand, equation \eqref{correspondenceSj} yields $$R^{j+1}_{\alpha_{j+1}}(q)=S^{a_{j+1},\beta_0}_{\alpha_{j+1}}(w(a_{j+1}\restriction\beta_0,q)),$$ hence  Clause~\ref{largerthandelta} 
implies that $\sigma_{j+1}\leq \sigma^{a_{j+1}}\leq \epsilon_{j+1}$. 
Recalling the expression displayed in \eqref{blackbox},
we altogether infer that $\epsilon_\omega=\sup_{j\geq j_0}\sigma_j=\varepsilon_q$.
\end{proof}

We are now in conditions to show that $q\forces_{\mathbb{P}_{\gamma_0}}\varepsilon_q\notin (\dot{T}^+)^{\beta_0}$. 

Suppose first that $q=\myceil{p}{\mathbb{P}_0}$ and that $\vec{\sigma}$ is eventually constant. Let $j\geq j_0$ be such that $\max(\rng(\vec{\sigma}))=\sigma_j$.  By the preceding Subclaim, $\varepsilon_q=\sigma_j$. 
Appealing to Clause~\eqref{C2ptree} of Definition~\ref{labeled-p-tree} for $R^j$ we have $\sigma_j\in R^j_{\alpha_j}(\myceil{p}{\mathbb{P}_{\beta_0}})$, hence $\varepsilon_q\in  R^j_{\alpha_j}(\myceil{p}{\mathbb{P}_{\beta_0}})$. Finally, Clause~\eqref{C3ptree} of Definition~\ref{labeled-p-tree} for $R^j_{\alpha_j}$ yields $q\forces_{\mathbb{P}_{\gamma_0}} R^j_{\alpha_j}(q)\cap (\dot{T}^+)^{\beta_0}=\emptyset$ and thus 
$q\forces_{\mathbb{P}_{\gamma_0}}\varepsilon_q\notin (\dot{T}^+)^{\beta_0}.$

\medskip

Now, suppose we are in the other case, so that $\varepsilon_q=\epsilon_\omega$. In this case, instead of proving $q\forces_{\mathbb{P}_{\gamma_0}}\varepsilon_q\notin(\dot{T}^+)^{\beta_0}$ we will moreover prove that $(\epsilon_\omega, \myceil{p}{\mathbb{P}_{\beta_0}})\in R_{\beta_0}$. This  will become handy later on when verifying  Clause~ \eqref{C4ptree} for $S_\alpha$ and Clauses~\eqref{C3ptree} and \eqref{C4ptree} for $S_{\alpha+1}$.

By Remark~\ref{additionalfeatureofT+}, $(\epsilon_\omega, \myceil{p}{\mathbb{P}_{\beta_0}})\in R_{\beta_0}$ amounts to asserting that $r\forces_{{(\mathbb{P}_{\beta_0})}_{\ell_{\beta_0}(r)}}\epsilon_\omega\in \dot{C}^{\beta_0}_{\ell_{\beta_0}(r)}$  for all $r\le_{\beta_0} \myceil{p}{\mathbb{P}_{\beta_0}}$.\footnote{Here note that we are (crucially) using that in \blkref{II} we appeal to Corollary~\ref{onestep} with respect to the condition $\one_{\mathbb{P}_{\beta_0}}$.} 
To this end, let $r\le_{\beta_0}\myceil{p}{\mathbb{P}_{\beta_0}}$.

\begin{itemize}
\item[$\br$]	 If $r\in (P_{\beta_0})^{\myceil{p}{\mathbb{P}_{\beta_0}}}_{\geq 1}$,
then,  for each $j\geq j_0$,  $\fork[\delta,\beta_0]{a_{j+1}}(r)\in (P_\delta)_{\geq 1}^{a_{j+1}}$ and so there is some $c_{j+1}\in  W_{\geq 1}(a_{j+1})$ such that $\fork[\delta,\beta_0]{a_{j+1}}(r)\le^0_\delta c_{j+1}$. By Lemma~\ref{CviIteration}\eqref{C5LemmaOfForking}, $r\leq^0_{\beta_0} c_{j+1}\restriction\beta_0$.  Observe that $\gamma_0\in B_{a_{j}}$, hence Clause \ref{tryingtofallintothering} for $a_{j+1}$ yields
$$c_{j+1}\restriction\beta_0\forces_{(\mathbb{P}_{\beta_0})_{\ell_{\beta_0}(r)}}\dot{C}^{\beta_0}_{\ell_{\beta_0}(r)}\cap (\check{\epsilon}_{j},\check{\epsilon}_{j+1})\neq \emptyset.$$ 
Consequently, for each $j\geq j_0$,  $r\forces_{(\mathbb{P}_{\beta_0})_{\ell_{\beta_0}(r)}}\dot{C}^{\beta_0}_{\ell_{\beta_0}(r)}\cap (\check{\epsilon}_{j},\check{\epsilon}_{j+1})\neq \emptyset.$ 

Finally, since  $r\forces_{(\mathbb{P}_{\beta_0})_{\ell_{\beta_0}(r)}}``\dot{C}^{\beta_0}_{\ell_{\beta_0}(r)}\text{ is a club}$'', one concludes that
$$r\forces_{(\mathbb{P}_{\beta_0})_{\ell_{\beta_0}(r)}}\check{\epsilon}_\omega\in \dot{C}^{\beta_0}_{\ell_{\beta_0}(r)}.$$

\item[$\br$] 	If   $r\in (P_{\beta_0})^{\myceil{p}{\mathbb{P}_{\beta_0}}}_{0}$, then we first claim that $\myceil{p}{\mathbb{P}_{\beta_0}}\forces_{\mathbb{P}_{\beta_0}}\check{\epsilon}_\omega\notin {(\dot{T}^+)}^{\beta_0}$.
Indeed, if this is not the case, then we may pick $s\in (P_{\beta_0})^{\myceil{p}{\mathbb{P}_{\beta_0}}}_{\geq 1}$ such that $s\forces_{\mathbb{P}_{\beta_0}}\check{\epsilon}_\omega\in {(\dot{T}^+)}^{\beta_0}$. By $\leq_{\beta_0}$-extending $s$ we may assume that $(\check{\epsilon}_\omega,s)\in (\dot{T}^+)^{\beta_0}$, so, by Definition~\ref{def424}, $(\check{\epsilon}_\omega,s)\in \tau^{\beta_0}_{\ell_{\beta_0}(s)}$.
In particular, $s\forces_{(\mathbb{P}_{\beta_0})_{\ell_{\beta_0}(s)}}\check{\epsilon}_\omega\in \tau^{\beta_0}_{\ell_{\beta_0}(s)}$.
Then, by Remark~\ref{RemarkAdditionalFeature}, $s\forces_{(\mathbb{P}_{\beta_0})_{\ell_{\beta_0}(s)}}\check{\epsilon}_\omega\notin {\dot{C}}^{\beta_0}_{\ell_{\beta_0}(s)}$,
contradicting the fact that $s\in (P_{\beta_0})^{\myceil{p}{\mathbb{P}_{\beta_0}}}_{\geq 1}$ and the analysis of the previous case,
replacing $r$ by $s$.

So, $ \myceil{p}{\mathbb{P}_{\beta_0}}\forces_{\mathbb{P}_{\beta_0}}\check{\epsilon}_\omega\notin {(\dot{T}^+)}^{\beta_0}$. In particular, 
$$\myceil{p}{\mathbb{P}_{\beta_0}}\forces_{(\mathbb{P}_{\beta_0})_{n}}\check{\epsilon}_\omega\notin {(\dot{T}^+)}^{\beta_0}_{n}.$$ 
Then, by Lemma~\ref{ThekeylemmaaboutT+}, $ \myceil{p}{\mathbb{P}_{\beta_0}}\forces_{(\mathbb{P}_{\beta_0})_{n}}\check{\epsilon}_\omega\notin {\tau}^{\beta_0}_{n}$ and 
$$\myceil{p}{\mathbb{P}_{\beta_0}}\forces_{(\mathbb{P}_{\beta_0})_{n}}\check{\epsilon}_\omega\notin (\check{E}^\mu_\omega\setminus \dot{C}^{\beta_0}_n).$$
Since $\epsilon_\omega\in (E^\mu_\omega)^V$, we must conclude that $\myceil{p}{\mathbb{P}_{\beta_0}}\forces_{(\mathbb{P}_{\beta_0})_{n}}\check{\epsilon}_\omega\in \dot{C}^{\beta_0}_n$. 
Consequently, the same is $(\mathbb{P}_{\beta_0})_{n}$-forced by $r$, as it extends $ \myceil{p}{\mathbb{P}_{\beta_0}}$.
\end{itemize}

Thus, we have proved that  
$(\varepsilon_q , \myceil{p}{\mathbb{P}_{\beta_0}})\in R_{\beta_0}$ for all $q\in W(\myceil{p}{\mathbb{P}_{\beta_0}})$. 

\smallskip

Note that at this point we have managed to establish that $q\forces_{\mathbb{P}}\varepsilon_q\notin (\dot{T}^+)^{\beta_0}$ for all $q\in W(\myceil{p}{\mathbb{P}_{\beta_0}})$. This completes the proof of Clause~\eqref{C3ptree} for $S_\alpha$.

\medskip

\underline{Clause~\eqref{C4ptree} for $S_{\alpha}$:}  Let $q'\le_{\mathbb{P}_{\beta_0}} q$ be a pair of members in  $ W(\myceil{p}{\mathbb{P}_{\beta_0}})$ with $\ell(q)\geq \ell(\myceil{p}{\mathbb{P}_{\beta_0}})+1$. Then $q'\neq\myceil{p}{\mathbb{P}_{\beta_0}}$ and so  the previous discussion yields $\varepsilon_q=\epsilon_\omega$ and $(\epsilon_\omega, \myceil{p}{\mathbb{P}_{\beta_0}})\in R_{\beta_0}$, hence $(\epsilon_\omega, q)\in R_{\beta_0}$, as well.

\medskip

\underline{Clauses~\eqref{C3ptree} $\&$ \eqref{C4ptree} of $S_{\alpha+1}$:}  By the preceding discussion it is clear that  $\max(S_{\alpha+1}(q))=\epsilon_\omega$. 
Also,  we have proved that $(\epsilon_\omega,\myceil{p}{\mathbb{P}_{\beta_0}})\in R_{\beta_0}$. Arguing similarly as above one can use this to prove Clauses~\eqref{C3ptree} and \eqref{C4ptree} for $S_{\alpha+1}$. 
Moreover, regarding Clause~\eqref{C4ptree}, $(\epsilon_\omega,\myceil{p}{\mathbb{P}_{\beta_0}})\in R_{\beta_0}$  yields $m(S_{\alpha+1})=0$.
\end{proof}

We are left with proving that $b_0$ witnesses Clauses~\ref{Clause1Induction}--\ref{Clause3Induction}.
To complete the proof of Clause~\ref{Clause1Induction} we still need to argue that $b_0\in \z{P}_{\gamma_0}$.
By the above claim, $b_0\in P_{\gamma_0}$ and $\mtp_{\gamma_0}(b_0)=m(S_{\alpha+1})=0$. Also, since  $p\in\z{P}_1$ and $\myceil{p}{\mathbb{P}_{\beta_0}}=p\ast\emptyset_{\beta_0}$, we may appeal to Lemma~\ref{LiftingAndRings} and infer that $b_0\restriction\beta_0=\myceil{p}{\mathbb{P}_{\beta_0}}\in\z{P}_{\beta_0}$.  So, recalling Definition~\ref{DefRingForLimits}, we conclude that $b_0\in\z{P}_{\gamma_0}$.
For Clause~\ref{Clause4Induction},  note that $B_{b_0}\setminus 2=\{\gamma_0\}$ and 
$$\sigma^{b_0,\beta_0}(b_0\restriction\beta_0)=\sigma^{b_0,\beta_0}(\myceil{p}{\mathbb{P}_{\beta_0}})=\epsilon_\omega.$$
Note that  $b_0$ is a condition in $\mathbb{P}_{\gamma_0}$ and also,  by construction, $b_0\le_{\gamma_0}^0 c_j\le^0_{\gamma_0} a_j\restriction\gamma_0$ for all $j<\omega$. Thus, Clause~\ref{Clause2Induction} holds.
Finally, Clause~\ref{Clause3Induction} is trivially true at this stage.	
Altogether, the constructed condition $b_0$ is as wanted. 

Next, suppose that $\tau<\theta$ and that  $\langle b_\eta\mid \eta<\tau\rangle$ has been already successfully defined. 
Put $b^*_\tau:=(\bigcup_{\eta<\tau} b_\eta)\ast\emptyset_{\beta_\tau}$.
\begin{claim}\label{b*inthering}
$b^*_\tau\in\z{P}_{\beta_\tau}$ and $b^*_\tau\le^0_{\beta_\tau} a_j\restriction\beta_\tau$ for all $j<\omega$. 
\end{claim}
\begin{proof}
Note that once we establish the former assertion the latter will follow automatically from Clauses~\ref{Clause2Induction} and \ref{Clause3Induction} of our induction hypothesis. 

$\br$ If $\tau$ takes the form $\bar{\eta}+1$ then Clauses~\ref{Clause1Induction} and \ref{Clause3Induction} of the induction hypothesis yield $\bigcup_{\eta<\tau}b_{\eta}=b_{\bar{\eta}}\in \z{P}_{\gamma_{\bar{\eta}}}$. Thus, Lemma~\ref{LiftingAndRings} yields $b^*_\tau\in \z{P}_{\beta_\tau}$. 

$\br$ Otherwise, set $\bar{\gamma}=\sup_{\eta<\tau}\gamma_\eta$ and note that $\bar{\gamma}\leq \beta_\tau$.  By Clause~\ref{Clause3Induction} of the induction hypothesis, $\bar{b}:=\bigcup_{\eta<\tau}b_{\eta}$ is a condition in $\mathbb{P}_{\bar{\gamma}}$. If we show that $\bar{b}\in \z{P}_{\bar{\gamma}}$ then Lemma~\ref{LiftingAndRings} will imply that $b^*_\tau\in \z{P}_{\beta_\tau}$ and we will be done. 

Let $\beta\in B_{\bar{b}}\cup\{1\}$ and $\eta<\tau$ be such that $\beta\in B_{b_\eta}$. By  Clause~\ref{Clause3Induction} of the induction hypothesis, $\bar{b}\restriction\beta=b_{\tau}\restriction\beta$, hence combining Clause~\ref{Clause1Induction} for $b_\tau$ with Lemma~\ref{ringcoheres} we get $\bar{b}\restriction\beta \in \z{P}_\beta$. 
Since $\bar{\gamma}$ is limit then  $\bar{b}\in\z{P}_{\bar{\gamma}}$. 
\end{proof}

For each $j<\omega$, set 
$c_j^\tau:=\fork[\gamma_\tau,\beta_\tau]{a_j\restriction\gamma_\tau}(b^*_\tau).$
By the preceding claim,  $c^\tau_j\in P_{\gamma_\tau}$ and $c^\tau_j\le^0_{\gamma_\tau} a_j\restriction\gamma_\tau$. 
Furthermore, $\langle c^\tau_j\mid j<\omega\rangle$ is $\le^{\pi_{\gamma_\tau,\beta_\tau}}_{\gamma_\tau}$-decreasing, so, as before, it is order-isomorphic to $(\omega,\ni)$.  
Let $j_\tau$ denote the least index such that $\gamma_\tau\in B_{a_j}$ for all $j\geq j_\tau$. For each $j\geq j_\tau$, put $\vec{R}^{j,\tau}:=\vec{S}^{c_j,\beta_\tau}$ so that $c^\tau_j=b^*_\tau{}^\smallfrown \langle\vec{R}^{j,\tau}\rangle$. 

Set $\alpha_j:=\alpha^{a_j,\beta_\tau}$ for each $j\geq j_\tau$.
We now define $b_\tau:=b^*_\tau{}^\smallfrown\langle\vec{S}\rangle$, where $\vec{S}$ is the sequence $\langle S_i\mid i\leq \alpha+1\rangle$
with $\alpha:=\sup_{j_\tau\leq j<\omega}\alpha_j$,  defined as in the case $\tau=0$ above just replacing $\vec{R}^j$ by $\vec{R}^{j,\tau}$.

We claim that $b_\tau$   witnesses Clauses~\ref{Clause1Induction}--\ref{Clause3Induction}: Arguing as in Claim~\ref{claim4243}, we get that $b_\tau\in P_{\gamma_\tau}$ and  $\mtp_{\gamma_\tau}(b_\tau)=m(S_{\alpha+1})=0$. Additionally, by Claim~\ref{b*inthering}, $b_\tau\restriction\beta_\tau=b^*_\tau\in\z{P}_{\beta_\tau}$, hence $b_\tau\in\z{P}_{\gamma_\tau}$. Finally, 
$$B_{b_\tau}\setminus 2=(B_{b^*_\tau}\setminus 2)\cup\{\gamma_\tau\}=\{\gamma_\eta\mid \eta\leq \tau\},$$ 
where the second equality comes from Clause~\ref{Clause1Induction} of the induction hypothesis.  

For the verification of Clause~\ref{Clause4Induction}, let us fix $\eta\leq \tau$. If  $\eta=\tau$ then one argues as in the case where $\tau=0$ that $\sigma^{b_\tau,\beta_\tau}(b_\tau\restriction\beta_\tau)=\sigma^{b_\tau,\beta_\tau}(b^*_\tau)=\epsilon_\omega$.  Otherwise, if $\eta<\tau$, Clauses~\ref{Clause4Induction} and \ref{Clause3Induction} of the  induction hypothesis yield
$$\sigma^{b_\tau,\beta_\eta}(b_\tau\restriction\beta_\eta)=\sigma^{b_\eta,\beta_\eta}(b_\eta\restriction\beta_\eta)=\epsilon_\omega.$$

Clause~\ref{Clause2Induction} follows   noting that $b_\tau\le_{\gamma_\tau}^0 c_j\le_{\gamma_\tau}^0 a_j\restriction\gamma_\tau$ for all $j<\omega$. Finally,  Clause~\ref{Clause3Induction} for $b_\tau$ is a consequence of $b_\tau\restriction\beta_\tau=b^*_\tau$ and Clause~\ref{Clause3Induction} of the induction hypothesis.

\medskip

Having constructed the sequence $\langle b_\tau\mid \tau<\theta\rangle$, as promised, we let $b:=(\bigcup_{\tau<\theta} b_\tau)\ast \emptyset_\delta$. 
By Clause~\ref{Clause3Induction}, $b\in P_\delta$, so, by Clauses~\ref{Clause1Induction} and \ref{Clause2Induction},
$b\le^0_\delta a_j$ for all $j<\omega$. 
Finally, we verify that $b\in D^{\epsilon}_{\delta,n}$. To this end, let $\beta$ with $\beta+1\in B_b\setminus 2$. Pick $\tau<\theta$ such that $\beta=\beta_\tau$. Appealing to Clause~\ref{Clause3Induction} for $b_{\tau+1}$,  $b\restriction \beta= b_{\tau+1}\restriction\beta_\tau= b_{\tau}\restriction\beta_\tau$, hence $$\sigma^{b,\beta}(b\restriction\beta)=\sigma^{b_\tau,\beta_\tau}(b_\tau\restriction\beta_\tau)=\epsilon_\omega>\epsilon.$$
Also, by Clause~\ref{Clause1Induction} for $b_\tau$,  $b\restriction\beta\in\z{P}_{\beta}$. Likewise,  by Lemma~\ref{ringcoheres}, $b\restriction 1\in\z{P}_1$. 
\end{proof}

Thanks to Lemma~\ref{ringisdense} we can now appeal to the iteration scheme of Section~\ref{Iteration} with respect to the building blocks of this section
and obtain, in return, a $\Sigma$-Prikry triple $(\mathbb P_{\mu^+},\lh_{\mu^+},c_{\mu^+})$.

\begin{theorem}\label{TheoremReflection} In $V^{\mathbb{P}_{\mu^+}}$ all of the following hold true:
\begin{enumerate}
\item Any cardinal in $V$ remains a cardinal and retains its cofinality;
\item $\kappa$ is a singular strong limit of countable cofinality;
\item $2^\kappa=\kappa^{++}$;
\item $\refl({<}\omega, \kappa^+)$.
\end{enumerate}
\end{theorem}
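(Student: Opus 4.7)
The plan is to address clauses~(1)--(4) in sequence, treating (1)--(3) as straightforward consequences of the iteration framework and reserving the bulk of the work for clause~(4).

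For clauses (1)--(3): first observe that $|\mathbb{P}_{\mu^+}|\leq \mu^+$. Indeed, each $p\in P_{\mu^+}$ is determined by $B_p\in[\mu^+]^{<\mu}$ together with $p\restriction B_p\colon B_p\to H_{\mu^+}$, and the count is $(\mu^+)^{<\mu}=\mu^+$ via $\mu^{<\mu}=\mu$ and $2^\mu=\mu^+$. Coupled with the $\mu^+$-chain condition from the Linked$_0$ property of $\mathbb{P}_{\mu^+}$, this yields preservation of all cardinals $\geq \mu^+$; Fact~\ref{l14}\eqref{C1l14} handles cardinals ${\le}\kappa$; and Lemma~\ref{CivIteration} gives $\emptyset_{\mu^+}\forces_{\mathbb{P}_{\mu^+}}\check\mu=\kappa^+$. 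Clause~(2) follows because $\mathbb{P}_{\mu^+}$ projects onto $\mathbb{P}_1\cong\mathbb{Q}$ (EBPF), which singularizes $\kappa$ to cofinality $\omega$, while Fact~\ref{l14}\eqref{C1l14} preserves the strong-limit property of $\kappa$. Clause~(3) combines the EBPF lower bound $2^\kappa\geq \lambda=\kappa^{++}$ with the nice-name count $|\mathbb{P}_{\mu^+}|^\kappa\le (\mu^+)^\kappa=\mu^+$ (again using $2^\mu=\mu^+$) for the matching upper bound.

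For clause~(4), the strategy is a bookkeeping argument through the iteration. By Fact~\ref{c27}, $V^{\mathbb{P}_{\mu^+}}\models\refl({<}\omega,\Gamma)$, so applying Proposition~\ref{p3} with $T:=E^\mu_\omega$, $R:=\mu$ and $\theta:=\omega$ reduces the task to establishing $\refl({<}2, E^\mu_\omega,\Gamma)$ in the extension. Suppose for contradiction that $T\s E^\mu_\omega$ is stationary in $V^{\mathbb{P}_{\mu^+}}$ and fails to reflect in $\Gamma$, so that for each $\delta\in\Gamma$ there is a club $C_\delta\s\delta$ disjoint from $T$. By the $\mu^+$-chain condition together with $|\mathbb{P}_{\mu^+}|=\mu^+$, both $T$ and each $C_\delta$ admit nice names of support bounded in $\mu^+$; since there are only $\mu$-many $C_\delta$'s and $\mu^+$ is regular, their supports are jointly contained in some $\alpha^*<\mu^+$. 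Then in $V^{\mathbb{P}_{\alpha^*}}$, the sets $T$ and $\{C_\delta\mid \delta\in\Gamma\}$ already live, and since clubs of $V^{\mathbb{P}_{\alpha^*}}$ remain clubs in $V^{\mathbb{P}_{\mu^+}}$, the set $T$ is stationary in $V^{\mathbb{P}_{\alpha^*}}$ and still fails to reflect in $\Gamma$ there.

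Now, by the choice of $\psi$ (every singleton has cofinal $\psi$-preimage in $\mu^+$), choose $\beta\ge\alpha^*$ with $\psi(\beta)=(\alpha^*,r,\sigma)$, where $\sigma$ is a $\mathbb{P}_{\alpha^*}$-name for $T$ and $r\in\mathbb{P}_{\alpha^*}$ forces that $\sigma$ names a non-reflecting stationary subset of $(E^\mu_\omega)^V$. Building Block~II at stage $\beta$ then invokes Fact~\ref{onestep} to produce $\mathbb{P}_{\beta+1}$ containing $\myceil{r*\emptyset_\beta}{\mathbb{P}_{\beta+1}}$, which forces the lifted name for $T$ to be non-stationary. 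Since $\mathbb{P}_{\mu^+}$ projects onto $\mathbb{P}_{\beta+1}$, the club witnessing this non-stationarity survives to $V^{\mathbb{P}_{\mu^+}}$, contradicting the stationarity of $T$.

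The main obstacle is the hypothesis ``$T\s (E^\mu_\omega)^V$'' required by Fact~\ref{onestep}. In $V^{\mathbb{P}_{\mu^+}}$ one has $E^\mu_\omega = (E^\mu_\omega)^V \cup E^V_\kappa$, where $E^V_\kappa:=\{\alpha<\mu\mid \cf^V(\alpha)=\kappa\}$, and stationary subsets of $E^V_\kappa$ in the extension fall outside the scope of Fact~\ref{onestep}. Handling them will require either a trace argument --- pushing each $\alpha\in E^V_\kappa$ down along a fixed $V$-cofinal $\kappa$-sequence to reduce a non-reflecting stationary subset of $E^V_\kappa$ to one contained in $(E^\mu_\omega)^V$ --- or leveraging the $V$-supercompactness of some $\kappa_n$ to produce a $V$-reflection point $\delta\in \Gamma$ at which the $\Sigma$-Prikry structure of $\mathbb{P}_{\mu^+}$ preserves stationarity of $T\cap\delta$ in $\delta$. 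This is the delicate step where the closure features and forking-projection apparatus of the iteration scheme are essential.
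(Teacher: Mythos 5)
Your treatment of clauses (1)--(3), and your reduction of clause (4) to a $\refl({<}2,\cdot,\Gamma)$ statement via Fact~\ref{c27} and Proposition~\ref{p3} followed by a bookkeeping argument through the iteration, follow the paper's own proof quite closely. The bookkeeping step in the paper is slightly more direct --- rather than choosing $\alpha^*$ large enough to absorb names for every witnessing club $C_\delta$, it fixes a condition $r^*\in G$ and a nice $\mathbb P_{\mu^+}$-name $\tau$ for $T$, uses the $\mu^+$-cc to bound $B_{r^*}\cup\bigcup\{B_p\mid(\xi,p)\in\tau\}$ below some $\beta<\mu^+$, restricts $\tau$ to a $\mathbb P_\beta$-name $\sigma$, and then invokes the bookkeeping function to arrange $\psi(\alpha)=(\beta,r^*\restriction\beta,\sigma)$ for some $\alpha>\beta$; this is a mild reorganization of your route, not a different idea.

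The real problem is your final paragraph, which raises a fictitious obstacle. You assert that in $V^{\mathbb P_{\mu^+}}$ one has $E^\mu_\omega=(E^\mu_\omega)^V\cup E^V_\kappa$ with $E^V_\kappa:=\{\alpha<\mu\mid\cf^V(\alpha)=\kappa\}$, and that stationary subsets of $E^V_\kappa$ escape the scope of Fact~\ref{onestep}. But $E^V_\kappa$ is empty: $\kappa=\sup_{n<\omega}\kappa_n$ is already singular in the ground model, and cofinality is always a regular cardinal, so no $\alpha$ satisfies $\cf^V(\alpha)=\kappa$. Moreover, clause~(1) of the theorem --- which you yourself establish --- says the iteration is cofinality-preserving, so $(E^\mu_\omega)^{V[G]}=(E^\mu_\omega)^V$ and there are no new cofinality-$\omega$ points to absorb. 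The paper simply applies Proposition~\ref{p3} with $T:=(E^\mu_\omega)^V$; since every $\alpha<\mu=\kappa^+$ has $\cf^V(\alpha)<\kappa$ (because $\kappa$ is singular), the union $(E^\mu_\omega)^V\cup\Gamma$ already exhausts all of $\mu$ up to a nonstationary set, and the reduction lands precisely inside the hypotheses of Fact~\ref{onestep}/Building Block II. The ``trace argument'' and the appeal to $V$-supercompactness you propose as remedies are solving a problem that does not exist, and you should delete that paragraph entirely. Relatedly, the remark in your clause~(2) that EBPF ``singularizes $\kappa$ to cofinality $\omega$'' is a slip of the same kind: $\kappa$ has cofinality $\omega$ in $V$ already, and the forcing merely preserves this.
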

\begin{proof}
(1) By Fact~\ref{l14}(1), no cardinal $\le\kappa$ changes its cofinality;
by Fact~\ref{l14}(3), $\kappa^{+}$ is not collapsed,
and by Definition~\ref{SigmaPrikry}\eqref{c1}, no cardinal $>\kappa^+$ changes its cofinality.

(2) In $V$, $\kappa$ is a singular strong limit of countable cofinality,
and so by Fact~\ref{l14}(1), this remains valid in $V^{\mathbb{P}_{\mu^+}}$.

(3) In $V$, we have that $2^\kappa=\kappa^+$.
In addition, by Remark~\ref{remark43}(1), $\mathbb{P}_{\mu^+}$ is isomorphic to a subset of $H_{\mu^+}$,
so that, from $|H_{\mu^+}|=\kappa^{++}$,
we infer that $V^{\mathbb{P}_{\mu^+}}\models 2^\kappa\le\kappa^{++}$.
Finally, as $\mathbb P_{\mu^+}$ projects to $\mathbb P_1$ which is isomorphic to $\mathbb Q$,
we get that $V^{\mathbb{P}_{\mu^+}}\models 2^\kappa\ge\kappa^{++}$.
Altogether, $V^{\mathbb{P}_{\mu^+}}\models 2^\kappa=\kappa^{++}$.

(4) As $\kappa^+=\mu$ and $\kappa$ is singular,
$\refl({<}\omega, \kappa^+)$ is equivalent to $\refl({<}\omega, E^\mu_{<\kappa})$.
By Fact~\ref{c27}, we already know that $V^{\mathbb P_{\mu^+}}\models \refl({<}\omega,\Gamma)$.
So, by Proposition~\ref{p3}, it suffices to verify that $\refl({<}2,(E^\mu_\omega)^V,\Gamma)$ holds in $V^{\mathbb P_{\mu^+}}$.

Let $G$ be $\mathbb P_{\mu^+}$-generic over $V$ and hereafter work within $V[G]$.
Towards a contradiction, suppose  that there exists a subset $T$ of $(E^\mu_\omega)^V$ that does not reflect in $\Gamma$. 
Fix $r^*\in G$ and a $\mathbb P_{\mu^+}$-name $\tau$ such that $\tau_G$ is equal to such a $T$
and such that $r^*$ forces $\tau$ to be a stationary subset of $(E^\mu_\omega)^V$
that does not reflect in $\Gamma$.
Furthermore, we may require that $\tau$ be a \emph{nice name}, i.e., each element of $\tau$ is a pair $(\check \xi,p)$ where $(\xi,p)\in (E^\mu_\omega)^V\times P_{\mu^+}$,
and, for all $\xi\in (E^\mu_\omega)^V$, the set $\{ p\mid (\check \xi,p)\in \tau\}$ is an antichain. 

As $\mathbb P_{\mu^+}$ satisfies Clause~ \eqref{c1} of Definition~\ref{SigmaPrikry}, $\mathbb P_{\mu^+}$ has the $\mu^+$-cc.
Consequently, there exists a large enough $\beta<\mu^+$ such that $$B_{r^*}\cup\bigcup\{ B_p\mid (\xi,p)\in\tau\}\s\beta.$$
Let $r:=r^*\restriction\beta$ and set $$\sigma:=\{(\xi,p\restriction\beta)\mid (\xi,p)\in\tau\}.$$
From the choice of \blkref{III}, we may find a large enough $\delta<\mu^+$ with $\delta>\beta$ such that $\psi(\delta)=(\beta,r,\sigma)$.
As $\beta<\delta$, $r\in P_\beta$ and $\sigma$ is a $\mathbb P_\beta$-name,
the definition of our iteration at step $\delta+1$ involves appealing to \blkref{II}
with $(\mathbb P_\delta,\lh_\delta,c_\delta)$,
$r^\star:=r*\emptyset_\delta$ and $z:=i^{\delta}_\beta(\sigma)$.
For any ordinal $\eta<\mu^+$, denote $G_\eta:=\pi_{\mu^+,\eta}[G]$.
By the choice of $\beta$, and as $\delta>\beta$, we have
$$\tau=\{ (\xi,p*\emptyset_{\mu^+})\mid (\xi,p)\in\sigma\}=\{ (\xi,p*\emptyset_{\mu^+})\mid (\xi,p)\in z\},$$
so that, in $V[G]$,
$$T=\tau_{G}=\sigma_{G_\beta}=z_{G_\delta}.$$
In addition, $r^*=r^\star*\emptyset_{\mu^+}$.

Finally, as $r^*$ forces $\tau$ is a stationary subset of $(E^\mu_\omega)^V$
that does not reflect in $\Gamma$,
$r^\star$ forces that $z$ is a stationary subset of $(E^\mu_\omega)^V$ that does not reflect in $\Gamma$.
So, since $\pi_{\mu^+,\delta+1}(r^*)=r^\star*\emptyset_{\delta+1}=\myceil{r^\star}{\mathbb P_{\delta+1}}$ is in $G_{\delta+1}$,
Clause~\ref{C8onestep} of \blkref{II} entails that, in $V[G_{\delta+1}]$,
there exists a club in $\mu$ which is disjoint from $T$. In particular, $T$ is nonstationary in $V[G]$,
contradicting its very choice.
\end{proof}

Thus, we arrive at the following strengthening of the theorem announced by Sharon in \cite{AS}. We remind the reader that, by Fact~\ref{factPartI}, the extent of reflection obtained is optimal.
\begin{cor}
Suppose that $\langle\kappa_n\mid n<\omega\rangle$ is an increasing sequence of supercompact cardinals,
converging to a cardinal $\kappa$. Then there exists a forcing extension where the following properties hold:
\begin{enumerate}
\item $\kappa$ is a singular strong limit cardinal of countable cofinality;
\item $2^\kappa=\kappa^{++}$, hence $\sch_\kappa$ fails;
\item $\refl({<}\omega,\kappa^+)$ holds.
\end{enumerate}
\end{cor}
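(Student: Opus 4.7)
The plan is to reduce the corollary to Theorem~\ref{TheoremReflection}, whose hypotheses are the standing assumptions listed at the top of Section~\ref{ReflectionAfterIteration}: an increasing sequence $\langle\kappa_n\mid n<\omega\rangle$ of \emph{Laver-indestructible} supercompacts together with $2^\kappa=\kappa^+$ and $2^\mu=\mu^+$. Since we are only given plain supercompacts, we must first manufacture these extra properties by a cofinality-preserving preparatory forcing.

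First I would pass to a suitable ground model via a preparation $\mathbb{R}$. Concretely, one performs the familiar reverse-Easton iteration in which, at each stage $\kappa_n$, one forces with Laver's indestructibility forcing for $\kappa_n$, and at other stages forces trivially. Standard lifting arguments show that $\mathbb{R}$ is cofinality-preserving, that every $\kappa_n$ remains supercompact in $V^{\mathbb{R}}$, and that each $\kappa_n$ becomes indestructible under any further $(\le\kappa_n)$-directed-closed forcing. By assuming $\gch$ in the ground model (or arranging it via a harmless preliminary forcing), one additionally obtains $2^\kappa=\kappa^+$ and $2^\mu=\mu^+$ in $V^{\mathbb{R}}$. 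At this point all the standing hypotheses of Section~\ref{ReflectionAfterIteration} hold in $V^{\mathbb{R}}$.

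Next, working inside $V^{\mathbb{R}}$, I would form the iteration $\mathbb{P}_{\mu^+}$ prescribed by Section~\ref{Iteration} with the three building blocks given in this section, and apply Theorem~\ref{TheoremReflection}. In the extension $V^{\mathbb{R}*\mathbb{P}_{\mu^+}}$, clauses~(1)--(4) of that theorem yield, respectively: preservation of every cofinality of $V^{\mathbb{R}}$; that $\kappa$ remains a singular strong limit of countable cofinality; $2^\kappa=\kappa^{++}$; and $\refl({<}\omega,\kappa^+)$. Combined with the cofinality preservation of $\mathbb{R}$, this makes $V^{\mathbb{R}*\mathbb{P}_{\mu^+}}$ a cofinality-preserving extension of $V$ in which items~(1) and~(3) of the corollary hold. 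The failure of $\sch_\kappa$ (item~(2)) is automatic from $2^\kappa=\kappa^{++}>\kappa^+$ together with $\cf(\kappa)=\omega$.

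The only delicate point is the preparation $\mathbb{R}$: one must simultaneously arrange Laver-indestructibility for every $\kappa_n$, $n<\omega$, together with the power identities $2^\kappa=\kappa^+$ and $2^\mu=\mu^+$, all while preserving cofinalities. This is a routine adaptation of Laver's original argument, but it does require some care in the bookkeeping because the sequence $\langle\kappa_n\mid n<\omega\rangle$ is cofinal in $\kappa$ rather than isolated. Once the preparation is in place, the corollary follows mechanically from Theorem~\ref{TheoremReflection}.
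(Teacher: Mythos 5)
Your proposal is correct and follows essentially the same route as the paper: a preparatory forcing to arrange Laver-indestructibility of the $\kappa_n$'s together with the arithmetic hypotheses $2^\kappa=\kappa^+$ and $2^\mu=\mu^+$, followed by an appeal to Theorem~\ref{TheoremReflection}. The only difference is in how the arithmetic is secured --- you invoke \gch{} (or force it beforehand), whereas the paper observes that $2^\kappa=\kappa^+$ is automatic from Solovay's theorem (as $\kappa$ is a singular strong limit above the supercompact $\kappa_0$) and then adds $\mathrm{Add}(\kappa^{++},1)$ on top of the Laver preparation to arrange $2^\mu=\mu^+$.
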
 
\begin{proof}
Let $\mathbb{L}$ be the inverse limit of the iteration $\langle \mathbb{L}_n; \dot{\mathbb{Q}}_n\mid n<\omega\rangle$, where $\mathbb{L}_0$ is the trivial forcing and for positive integer $n$,
if $\one_{\mathbb{L}_n}\forces_{\mathbb{L}_n}``\kappa_{n-1}\text{ is supercompact}"$,
then $\one_{\mathbb{L}_n}\forces_{\mathbb{L}_n}``\dot{\mathbb{Q}}_n\text{ is a Laver preparation}$ $\text{for } \kappa_n\text{ above }\kappa_{n-1}".$
After forcing with $\mathbb{L}$, each $\kappa_n$ remains supercompact and, moreover, becomes indestructible under $\kappa_n$-directed-closed forcing.
Also, the cardinals and cofinalities of interest are preserved.

Working in $V^\mathbb{L}$,  set
$\mu:=\kappa^+$, $\lambda:=\kappa^{++}$ and $\mathbb{C}:=\mathrm{Add}(\lambda,1)$.
Finally, work in $W:=V^{\mathbb{L}\ast \dot{\mathbb{C}}}$.
Since $\kappa$ is singular strong limit of cofinality $\omega<\kappa_0$ and $\kappa_0$ is supercompact,
$2^\kappa=\kappa^+$. Also, thanks to the forcing $\mathbb C$, $2^\mu=\mu^+$.
Altogether, in $W$, all the following hold:
\begin{itemize}
\item $\langle\kappa_n\mid n<\omega\rangle$ is an increasing sequence of Laver-Indestructible supercompact cardinals;
\item $\kappa:=\sup_{n<\omega}\kappa_n$, $\mu:=\kappa^+$ and $\lambda:=\kappa^{++}$;
\item $2^\kappa=\kappa^+$ and $2^\mu=\mu^+$.
\end{itemize}
Now, appeal to Theorem~\ref{TheoremReflection}.
\end{proof}

\end{document}